\documentclass{amsart}
\usepackage{booktabs}
\usepackage{fullpage}
\usepackage{tristan}
\usepackage{algorithm}
\usepackage{algpseudocode}
\usepackage{todonotes}

\usepackage{pgfplots}
\usepackage{pgfplotstable}
\usepackage{subcaption}
\usepackage{tikz}
\usepackage[
  giveninits=true,
  eprint=false,
  url=false,
  style=alphabetic,
  maxbibnames=99
]{biblatex}
\definecolor{seaborngreen}{rgb}{0.3333333333333333, 0.6588235294117647, 0.40784313725490196}

\title{Multilevel Monte Carlo for Polytopic DG Discretisations with Spatially Represented Random Coefficients}

\author[1]{Veronika Chronholm}

\author[2]{Matt Evans}

\author[3]{Emmanuil H. Georgoulis}

\author[1,2]{Tristan Pryer}
  
\address{$^1$ Institute for Mathematical Innovation\\ University of
  Bath, Bath, UK. $^2$ Department of Mathematical Sciences
  \\ University of Bath, Bath, UK.}
\address{  The Maxwell Institute for Mathematical Sciences \& Department of Mathematics, \\ Heriot-Watt University, Edinburgh EH14 4AS, UK, and \\ Department of Mathematics, School of Applied Mathematical and Physical Sciences, \\ National
  Technical University of Athens, Zografou 15780, Greece, and \\
  IACM-FORTH, Greece}
\email{e.georgoulis@hw.ac.uk}

\begin{document}

\begin{abstract}
  We develop a multilevel Monte Carlo framework for coercive elliptic
  problems with random diffusion and reaction coefficients on
  polytopic meshes, with particular emphasis on the approximation and
  inter-level coupling of random coefficients generated from spatially
  represented latent fields. The main analytical contribution is a
  Strang-type error decomposition in a DG norm that separates the
  forward discretisation error from the error introduced by
  approximating the random coefficients. On this basis, we study two
  coefficient-generation and multilevel-coupling strategies, a
  spectral sampler based on a stationary Gaussian latent field defined
  on a fixed Cartesian reference grid, and an SPDE-based Mat\'ern
  sampler in which levels are coupled through projected white-noise
  loads on agglomerated hierarchies. Both fit into the same
  coefficient-aware analytical framework and couple adjacent levels
  directly through a common latent random input. The theory is
  supported by numerical experiments on general Voronoi and
  agglomerated polytopic meshes, demonstrating the predicted forward
  and coefficient-representation errors, decay of multilevel
  corrections, strong inter-level coupling, and the resulting
  work--accuracy behaviour relative to single-level Monte Carlo.
\end{abstract}

\maketitle

\section{Introduction}\label{sec:intro}

Elliptic boundary value problems with heterogeneous diffusion and
reaction coefficients arise throughout scientific computing and
engineering. In many applications these coefficients are not known
exactly and are more naturally modelled as random fields. The
computational task then becomes the estimation of expectations and
other statistics of quantities of interest depending on the solution
of a parameter-dependent PDE. For problems of realistic size, the
dominant cost lies in the repeated numerical solution of the forward
model.

Standard Monte Carlo methods are robust but become prohibitively
expensive once fine spatial resolution is required. Multilevel Monte
Carlo (MLMC) methods address this by combining many inexpensive coarse
solves with far fewer expensive fine solves, and can reduce total work
to near that of a single deterministic solve at the target accuracy
\cite{Heinrich2001MLMC,Giles2008MLMC,Giles2015Acta,BarthSchwabZollinger2011NumerMath}.
For elliptic PDEs with random coefficients, MLMC theory has been
developed for conforming finite element discretisations on nested mesh
hierarchies, including pathwise discretisation bounds and complexity
results for uniformly elliptic and lognormal models
\cite{CliffeGilesScheichlTeckentrup2011,CharrierScheichlTeckentrup2013,
TeckentrupScheichlGilesUllmann2013}.

Two related issues arise when MLMC is combined with spatially
represented random fields on general meshes. The first is geometric,
the forward discretisations may be non-nested or may be constructed by
agglomeration. White-noise sampling and MLMC coupling across nested
and non-nested finite element hierarchies have been developed in
\cite{CrociGilesRognesFarrell2018}, while recent work has combined
MLMC with Virtual Element discretisations on general polytopal meshes
using agglomerated coarse spaces
\cite{AntoniettiBonizzoniPerugiaVerani2026MLMCVEM}. The second issue
is the representation of the random input itself. When a coefficient
is generated through a spatially discretised latent field, its
approximation becomes part of the overall discretisation error and its
inter-level coupling directly affects the variance of the multilevel
corrections. SPDE-based sampling of random-field coefficients has been
studied in the multilevel quasi-Monte Carlo setting
\cite{CrociGilesFarrell2021MLQMC}, including coupling on non-nested
finite element meshes. The present work focuses on this latter
coefficient-representation issue in the setting of polytopic DG
discretisations, where strong pairwise coupling remains essential for
MLMC efficiency \cite{ChadaHoelJasraZouraris2022StrongCoupling}.

Discontinuous Galerkin methods on polytopic meshes provide an
attractive discretisation route in such settings. They offer geometric
flexibility while retaining robust stability and high-order
approximation on general polytopic mesh families
\cite{CangianiGeorgoulisHouston2014,CangianiDongGeorgoulisHouston2016ADRPolytopic}.
Furthermore, the cardinality of local element bases is independent of
the element shape. More broadly, recent work on polytopic
discretisations has shown that this geometric setting supports a wider
family of numerical methods, including recovery-based formulations and
their extension to polytopic meshes
\cite{georgoulis2018recovered,dong2020recovered}. In a related
direction, recent composite and nodally bound-preserving constructions
underline that polytopic meshes can support stable and structure-aware
discretisations beyond the standard interior penalty framework
\cite{amiri2025nodally}. At the same time, agglomeration-based
multilevel constructions for DG on polytopic meshes have been
developed successfully for linear solver design, including multigrid
and Schwarz methods \cite{AntoniettiHoustonHuSartiVerani2017Multigrid,
  AntoniettiPennesi2019Vcycle,
  AntoniettiHoustonPennesiSuli2020Schwarz}.

The present work is motivated by the need to develop an MLMC framework
for random-coefficient elliptic problems on general polytopic DG meshes
in which level coupling is induced by the underlying random input,
rather than by remapping already discretised coefficient fields between
meshes. We consider coercive elliptic diffusion--reaction problems with
random diffusion and reaction coefficients, discretised by
lifting-based symmetric interior penalty DG methods on polytopic
meshes. The FFT construction can be used on unrelated forward meshes,
while the SPDE construction is developed on agglomerated hierarchies
with a natural coarse-to-fine inclusion.

Our first contribution is analytical. We derive a Strang-type
decomposition in the DG norm that separates the forward discretisation
error from the error introduced by approximating the random
coefficients. The formulation is written using a discrete lifting,
equivalently an $L^2$-projected numerical flux, so that coefficient
traces are avoided in the consistency analysis. This is the key
structural point for the multilevel analysis. Once the coefficients are
generated only approximately, whether by representation of a spectral
sample or by the numerical solution of an auxiliary stochastic PDE, the
total bias must distinguish approximation of the forward solution from
approximation of the data driving the forward problem.

Our second contribution is to instantiate this coefficient-aware
framework with two different coefficient-generation and coupling
strategies on polytopic DG meshes. The first is a spectral method in
which a stationary Gaussian latent field is
generated on a fixed Cartesian reference grid using FFT and
circulant-embedding ideas
\cite{WoodChan1994,DietrichNewsam1997,
GrahamKuoNuyensScheichlSloan2018CirculantEmbedding} and then evaluated
at the quadrature points required for DG assembly. In this way, all
levels are driven by the same represented latent-field realisation,
without transferring a discretised coefficient field between the
forward meshes. The second is an SPDE-based Mat\'ern construction in
which the latent field is defined through an auxiliary elliptic
stochastic PDE driven by spatial white noise
\cite{Whittle1954Plane,Matern1986,LindgrenRueLindstrom2011SPDE}. On an
agglomerated hierarchy, the levels are coupled through the driving
noise itself using the exact discrete relation between fine and coarse
white-noise loads. The SPDE route is also informed by DG analysis in
mesh-dependent norms for rough data, which provides a useful analytical
precedent for discretisations driven by irregular right-hand sides
\cite{georgoulis2017analysis}. Modern developments in the SPDE
viewpoint and in scalable Markov and graph representations provide
further context for this route
\cite{SanzAlonsoYang2022MaternGraph,
LindgrenBolinRue2022SPDEReview,BolinKirchner2020RationalSPDE}.

Our third contribution is to combine these ingredients in a single
coefficient-aware multilevel error analysis. For the forward problem
we obtain DG-norm bounds in which the influence of coefficient
approximation appears explicitly through $\leb q$-type norms of the
coefficient error. For the spectral sampler these terms are related to
the representation and interpolation error of the latent field. For
the SPDE sampler they are related to approximation properties of the
DG resolvent acting on spatial white noise. This yields a unified MLMC
framework in which both coefficient-generation strategies can be
analysed using the same forward DG technology.

The distinction from the closest existing multilevel constructions is
therefore in the role played by the random coefficient. White-noise
sampling and coupling for finite element SPDE discretisations, including
non-nested hierarchies, were developed in
\cite{CrociGilesRognesFarrell2018}, and this approach was subsequently
used for SPDE-generated random-field coefficients in a multilevel
quasi-Monte Carlo framework
\cite{CrociGilesFarrell2021MLQMC}. More recently,
\cite{AntoniettiBonizzoniPerugiaVerani2026MLMCVEM} developed an MLMC
Virtual Element method on general polytopal meshes with agglomerated
coarse spaces. In the latter setting the random diffusion coefficient
is treated as a given continuum realisation and the multilevel analysis
concerns the approximation of the corresponding forward problem.

Here the spatial representation of the random coefficient is included
explicitly in the multilevel discretisation. The lifting-based DG
formulation gives a coefficient-aware Strang decomposition separating
forward spatial approximation, projected-flux consistency and
coefficient-representation error without requiring traces of the
represented coefficient. This permits two different input-space
couplings to be treated within the same analysis. In the FFT
construction, all forward levels are driven by a common represented
latent field and the forward meshes need not be nested. In the SPDE
construction, the agglomerated DG spaces are naturally nested and the
coarse white-noise load is obtained by exact restriction of the
fine-level load. Thus the coefficient representation controls a
separate contribution to the bias, while the coupling of the latent
random input controls the strength of the multilevel corrections.

The numerical experiments complement the analysis on general Voronoi
and agglomerated polytopic meshes. They verify the expected spatial
rates of the $\mathbb P_1$ DG discretisation, isolate the
coefficient-representation contribution predicted by the Strang
decomposition, and measure the weak and variance decay of the
multilevel corrections. The coupling diagnostics show strong
correlation between neighbouring levels for both random-field
constructions. The final work experiments compare complete MLMC and
single-level Monte Carlo calculations and illustrate how the observed
variance decay and sample costs determine the practical multilevel
gain.

The remainder of the paper is organised as follows.
Section~\ref{sec:problem} introduces the random elliptic
diffusion--reaction problem, the admissible random-coefficient models
and the quantities of interest. Section~\ref{sec:polytopic_dg}
formulates the lifting-based polytopic DG discretisation and derives
the Strang-type decomposition separating spatial discretisation,
projected-flux and coefficient-representation errors.
Section~\ref{sec:sampling} develops the two random-field constructions:
the common-reference FFT sampler and the SPDE Mat\'ern sampler with
white-noise coupling on agglomerated hierarchies.
Section~\ref{sec:mlmc} combines these approximation estimates with the
MLMC bias--variance decomposition, derives the corresponding error and
work estimates, and describes the optimal sample allocation.
Finally, Section~\ref{sec:numerics} presents numerical experiments on
Voronoi and agglomerated polytopic meshes, verifying the forward and
coefficient-representation errors, the strength of the multilevel
couplings, and the resulting work--accuracy behaviour.

\section{Problem setup}
\label{sec:problem}

Throughout, $L^p(\Omega)$ and $H^s(\Omega)$ denote the usual Lebesgue
and Sobolev spaces, and $(\Xi,\mathcal F,\mathbb P)$ is a complete
probability space. Bochner spaces are denoted by $L^p(\Xi;X)$, with
expectation $\mathbb E$ and variance $\mathbb V$ in the usual sense.

\subsection{Elliptic model with random coefficients}
\label{subsec:elliptic_random}

Let
\begin{equation}
  V:=\hoz(\W).
  \label{eq:space_V}
\end{equation}
We model the diffusion and reaction coefficients as random fields
\begin{equation}
  A:\W\times\Xi\to\mathbb R^{d\times d}_{\mathrm{sym}},
  \qquad
  c:\W\times\Xi\to\mathbb R,
  \label{eq:random_coeff_fields}
\end{equation}
measurable in $\xi\in\Xi$ and essentially bounded in $x\in\W$.  We
assume that there exist deterministic constants
\begin{equation}
  0<\underline a\le\overline a<\infty,
  \qquad
  0<\underline c\le\overline c<\infty,
  \label{eq:coeff_constants}
\end{equation}
such that
\begin{equation}
  \underline a\norm{\eta}^2
  \le
  \eta^\top A(x,\xi)\eta
  \le
  \overline a\norm{\eta}^2,
  \qquad
  \underline c
  \le
  c(x,\xi)
  \le
  \overline c,
  \label{eq:coeff_bounds}
\end{equation}
for almost every $x\in\W$, every $\eta\in\mathbb R^d$, and almost
every $\xi\in\Xi$.

Given $F\in\leb2(\W)$, for almost every $\xi\in\Xi$ we consider the
problem: find $u(\xi)\in V$ such that
\begin{equation}
  \acont{A(\xi)}{c(\xi)}{u(\xi)}{v}
  =
  \ltwop{F}{v}
  \qquad
  \forall v\in V,
  \label{eq:continuum_problem_random}
\end{equation}
where
\begin{equation}
  \acont{A}{c}{w}{v}
  :=
  \int_\W
  A(x)\nabla w\cdot\nabla v
  \d x
  +
  \int_\W
  c(x)wv
  \d x.
  \label{eq:continuum_bilinear_form}
\end{equation}
By \eqref{eq:coeff_bounds},
$\acont{A(\xi)}{c(\xi)}{\cdot}{\cdot}$ is uniformly coercive,
\begin{equation}
  \acont{A(\xi)}{c(\xi)}{v}{v}
  \ge
  \min\qc{\underline a,\underline c}
  \Norm{v}_{\sobh1(\W)}^2
  \qquad
  \forall v\in V,
  \label{eq:continuum_coercive_continuous}
\end{equation}
and uniformly continuous,
\begin{equation}
  \norm{
    \acont{A(\xi)}{c(\xi)}{w}{v}
  }
  \le
  \max\qc{\overline a,\overline c}
  \Norm{w}_{\sobh1(\W)}
  \Norm{v}_{\sobh1(\W)}
  \qquad
  \forall w,v\in V.
  \label{eq:continuum_continuity}
\end{equation}
Hence, by the Lax--Milgram theorem,
\eqref{eq:continuum_problem_random} admits a unique solution for almost
every $\xi\in\Xi$.  Moreover, there exists a constant
$C_{\mathrm{stab}}>0$, depending only on
$\underline a$, $\overline a$, $\underline c$, $\overline c$ and
$\W$, such that
\begin{equation}
  \Norm{u(\xi)}_{\sobh1(\W)}
  \le
  C_{\mathrm{stab}}
  \Norm{F}_{\leb2(\W)}
  \qquad
  \text{for a.e. }\xi\in\Xi.
  \label{eq:continuum_stability}
\end{equation}

\begin{remark}[Almost sure well-posedness and moment bounds]
  \label{rem:as_wellposed}
  Under the stated measurability assumptions, the solution map
  $u:\Xi\to V$ is strongly measurable.  Since
  \eqref{eq:continuum_stability} is uniform in $\xi$,
  \begin{equation}
    u
    \in
    \leb{\infty}\qp{\Xi;V},
    \qquad
    \Norm{u}_{\leb{\infty}\qp{\Xi;V}}
    \le
    C_{\mathrm{stab}}
    \Norm{F}_{\leb2(\W)}.
    \label{eq:uniform_moment_bound_u}
  \end{equation}
  Consequently,
  $u\in\leb p\qp{\Xi;V}$ for every $p\in[1,\infty)$ and
  \begin{equation}
    \mathbb E\qb{
      \Norm{u}_{\sobh1(\W)}^p
    }
    <\infty
    \qquad
    \forall p\in[1,\infty).
    \label{eq:moment_bound_u}
  \end{equation}
\end{remark}

\subsection{Random input models}
\label{subsec:random_inputs}

The analysis below uses only the pathwise bounds
\eqref{eq:coeff_bounds}.  We therefore allow different constructions
of the random coefficients, provided these bounds hold uniformly,
either directly or after a deterministic truncation.  We record here
the models used subsequently in the analysis and numerical
experiments.

A convenient scalar model is obtained from truncated lognormal
fields.  Let $Y_a$ and $Y_c$ be mean-zero Gaussian fields, which may
be correlated, and let
$m_a,m_c\in\leb{\infty}(\W)$ be deterministic.  Define
\begin{equation}
  \Pi_{[-\beta,\beta]}(y)
  :=
  \min\qc{
    \beta,
    \max\qc{-\beta,y}
  }.
  \label{eq:truncation_operator}
\end{equation}
For truncation parameters $\beta_a,\beta_c>0$, set
\begin{equation}
  a(x,\xi)
  :=
  \exp\qp{
    m_a(x)
    +
    \Pi_{[-\beta_a,\beta_a]}
    Y_a(x,\xi)
  },
  \label{eq:lognormal_a}
\end{equation}
and
\begin{equation}
  c(x,\xi)
  :=
  \exp\qp{
    m_c(x)
    +
    \Pi_{[-\beta_c,\beta_c]}
    Y_c(x,\xi)
  }.
  \label{eq:lognormal_c}
\end{equation}
Then
\begin{equation}
  \underline a
  :=
  \exp\qp{
    \essinf_\W m_a-\beta_a
  },
  \qquad
  \overline a
  :=
  \exp\qp{
    \esssup_\W m_a+\beta_a
  },
  \label{eq:a_bounds_from_trunc}
\end{equation}
and
\begin{equation}
  \underline c
  :=
  \exp\qp{
    \essinf_\W m_c-\beta_c
  },
  \qquad
  \overline c
  :=
  \exp\qp{
    \esssup_\W m_c+\beta_c
  }.
  \label{eq:c_bounds_from_trunc}
\end{equation}
Thus the resulting coefficients satisfy
\eqref{eq:coeff_bounds} almost surely.

\begin{remark}[Scalar diffusion]
  \label{rem:scalar_diffusion}
  In the scalar case we take
  \begin{equation}
    A(x,\xi)
    =
    a(x,\xi)I.
    \label{eq:scalar_diffusion}
  \end{equation}
  The diffusion condition in \eqref{eq:coeff_bounds} then reduces to
  \[
    \underline a
    \le
    a(x,\xi)
    \le
    \overline a.
  \]
  This is the setting used in the numerical experiments below.  A
  single latent field may be used for both coefficients by taking
  $Y_a=Y_c$, while distinct or correlated fields are equally
  admissible for the analysis.
\end{remark}

\subsection{Quantities of interest}
\label{subsec:qoi}

We consider quantities of interest of the form
\begin{equation}
  Q:V\to\mathbb R,
  \qquad
  Q^\star(\xi)
  :=
  Q\qp{u(\xi)}.
  \label{eq:qoi_def}
\end{equation}
For the multilevel analysis we focus on functionals which are
Lipschitz continuous with respect to the $\sobh1(\W)$ norm.  The
stability and forward approximation estimates can then be transferred
directly to the bias and to the multilevel corrections.

A canonical example is a bounded linear functional
\begin{equation}
  Q(u)
  :=
  \ltwop{g}{u}_{V^\ast,V},
  \qquad
  g\in V^\ast.
  \label{eq:qoi_linear}
\end{equation}
Indeed,
\begin{equation}
  \norm{
    Q(w)-Q(z)
  }
  \le
  \Norm{g}_{V^\ast}
  \Norm{w-z}_{\sobh1(\W)}
  \qquad
  \forall w,z\in V.
  \label{eq:qoi_linear_lip}
\end{equation}
In particular, if $g\in\leb2(\W)$, then
\[
  Q(u)
  =
  (g,u)_{\leb2(\W)}.
\]

Another useful example is a regional average.  If
$\omega\subset\W$ is measurable with $|\omega|>0$, define
\begin{equation}
  Q(u)
  :=
  \frac{1}{|\omega|}
  \int_\omega
  u(x)
  \d x.
  \label{eq:qoi_average}
\end{equation}
Then
\begin{equation}
  \norm{
    Q(w)-Q(z)
  }
  \le
  |\omega|^{-1/2}
  \Norm{w-z}_{\leb2(\W)}
  \le
  |\omega|^{-1/2}
  \Norm{w-z}_{\sobh1(\W)}.
  \label{eq:qoi_average_lip}
\end{equation}
Since point evaluation is not a continuous functional on
$\sobh1(\W)$ when $d\ge2$, local averages of the form
\eqref{eq:qoi_average} provide a convenient replacement whenever a
spatially localised output is required.

\section{Polytopic DG discretisation and Strang decomposition}
\label{sec:polytopic_dg}

We formulate the polytopic DG method independently of the particular
sampling strategy used for the random coefficients.  On each level
$\ell$, the forward problem is assembled using the represented
coefficient pair
\begin{equation}
  \widetilde A_\ell
  \in
  \leb{\infty}\qp{\W;\mathbb R^{d\times d}_{\mathrm{sym}}},
  \qquad
  \widetilde c_\ell
  \in
  \leb{\infty}(\W).
  \label{eq:discrete_coeff_pair}
\end{equation}
These are the coefficient fields actually seen by the DG
discretisation.  Their construction is discussed in
Section~\ref{sec:sampling}.

Throughout this section we assume the uniform bounds
\begin{equation}
  \underline a\norm{\xi}^2
  \le
  \xi^\top\widetilde A_\ell(x)\xi
  \le
  \overline a\norm{\xi}^2,
  \qquad
  0<\underline c
  \le
  \widetilde c_\ell(x)
  \le
  \overline c,
  \label{eq:dg_discrete_coeff_bounds}
\end{equation}
for almost every $x\in\W$, every $\xi\in\mathbb R^d$ and every
$\ell=0,\ldots,L$, with constants independent of $\ell$.

\subsection{Polytopic meshes and broken spaces}
\label{subsec:polytopic_meshes}

Let $\qc{\mathcal T_\ell}_{\ell=0}^L$ be a family of partitions of
$\W$ into non-overlapping open polytopes $K$ such that
\begin{equation}
  \overline\W
  =
  \bigcup_{K\in\mathcal T_\ell}\overline K,
  \qquad
  K\cap K'
  =
  \emptyset
  \quad
  \text{for }K\neq K'.
  \label{eq:mesh_partition}
\end{equation}
For $K\in\mathcal T_\ell$, let
\[
  h_K
  :=
  \operatorname{diam}(K),
\]
and set
\begin{equation}
  h_\ell
  :=
  \max_{K\in\mathcal T_\ell}h_K.
  \label{eq:mesh_size_level}
\end{equation}

Let $\mathcal E_\ell$ denote the set of open $(d-1)$-dimensional
faces of $\mathcal T_\ell$, decomposed into interior and boundary
faces,
\begin{equation}
  \mathcal E_\ell^i
  :=
  \ensemble{
    e=\partial K^-\cap\partial K^+
  }{
    K^\pm\in\mathcal T_\ell,\ K^-\neq K^+
  },
  \qquad
  \mathcal E_\ell^b
  :=
  \ensemble{
    e\subset\partial K\cap\partial\W
  }{
    K\in\mathcal T_\ell
  }.
  \label{eq:face_sets}
\end{equation}
For each $e\in\mathcal E_\ell^i$, fix a unit normal $\vec n_e$
pointing from $K^-$ to $K^+$, while on
$e\in\mathcal E_\ell^b$ we take $\vec n_e$ to be the outward unit
normal.

The face scale is defined in terms of the neighbouring element
diameters rather than the face measure.  We set
\begin{equation}
  h_e
  :=
  \begin{cases}
    \displaystyle
    \frac{2h_{K^-}h_{K^+}}
         {h_{K^-}+h_{K^+}},
    &
    e=\partial K^-\cap\partial K^+
    \in\mathcal E_\ell^i,
    \\[3mm]
    h_K,
    &
    e\subset\partial K
    \in\mathcal E_\ell^b.
  \end{cases}
  \label{eq:face_scale}
\end{equation}
In particular, the stabilisation does not degenerate when a polytope
contains faces of small measure \cite{botti2025trace}.

We impose a uniform chunkiness condition on the elements.  Namely,
there exists $\vartheta_\ast\in(0,1]$, independent of $\ell$, such
that for every $K\in\mathcal T_\ell$ there exist
$x_K\in K$ and $\rho_K\ge\vartheta_\ast h_K$ satisfying
\begin{equation}
  B(x_K,\rho_K)
  \subset
  K,
  \label{eq:poly_shape_reg}
\end{equation}
and $K$ is star-shaped with respect to this ball, in the sense that
\[
  \qb{x,y}
  \subset
  K
  \qquad
  \forall x\in B(x_K,\rho_K),
  \quad
  \forall y\in K.
\]
No lower bound is imposed on the measure of an individual face, and
the number of faces of an element is not assumed to be uniformly
bounded.

For a scalar field $v$ defined elementwise, let $v^\pm$ denote its
traces on
$e=\partial K^-\cap\partial K^+\in\mathcal E_\ell^i$ and define
\begin{equation}
  \jump{v}
  :=
  v^--v^+,
  \qquad
  \avg{v}
  :=
  \frac12\qp{v^-+v^+}.
  \label{eq:jump_average}
\end{equation}
On boundary faces we set
\begin{equation}
  \jump{v}
  :=
  v,
  \qquad
  \avg{v}
  :=
  v.
  \label{eq:jump_average_boundary}
\end{equation}
For vector fields, averages are taken componentwise.

Fix a polynomial degree $p\ge1$.  On each level define
\begin{equation}
  V_\ell
  :=
  \ensemble{
    v\in\leb2(\W)
  }{
    v|_K\in\mathbb P_p(K)
    \text{ for all }K\in\mathcal T_\ell
  },
  \label{eq:dg_space}
\end{equation}
with broken gradient
\begin{equation}
  \qp{\nabla_\ell v}|_K
  :=
  \nabla\qp{v|_K}.
  \label{eq:broken_gradient}
\end{equation}
We also use
\begin{equation}
  \sobh1(\mathcal T_\ell)
  :=
  \ensemble{
    v\in\leb2(\W)
  }{
    v|_K\in\sobh1(K)
    \text{ for all }K\in\mathcal T_\ell
  }.
  \label{eq:broken_h1}
\end{equation}

\begin{lemma}[Trace and inverse inequalities]
  \label{lem:poly_trace_inverse}
  Assume \eqref{eq:poly_shape_reg}.  There exists
  $C_{\mathrm{tr}}>0$, depending only on $\vartheta_\ast$ and the
  dimension, such that
  \begin{equation}
    \Norm{v}_{\leb2(\partial K)}^2
    \le
    C_{\mathrm{tr}}
    \qb{
      h_K^{-1}
      \Norm{v}_{\leb2(K)}^2
      +
      h_K
      \Norm{\nabla v}_{\leb2(K)}^2
    }
    \qquad
    \forall v\in\sobh1(K).
    \label{eq:poly_trace}
  \end{equation}
  Moreover, for fixed $p\ge1$ there exists
  $C_{\mathrm{inv}}>0$, independent of $K$ and $\ell$, such that
  \begin{equation}
    \Norm{\nabla v}_{\leb2(K)}
    \le
    C_{\mathrm{inv}}
    \frac{p^2}{h_K}
    \Norm{v}_{\leb2(K)}
    \qquad
    \forall v\in\mathbb P_p(K).
    \label{eq:poly_inverse}
  \end{equation}
  Consequently,
  \begin{equation}
    \Norm{v}_{\leb2(\partial K)}
    \lesssim
    h_K^{-1/2}
    \Norm{v}_{\leb2(K)}
    \qquad
    \forall v\in\mathbb P_p(K),
    \label{eq:poly_trace_inverse}
  \end{equation}
  where the hidden constant may depend on $p$ but is independent of
  the number and relative sizes of the faces of $K$.
\end{lemma}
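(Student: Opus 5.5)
The trace inequality \eqref{eq:poly_trace} will come from a divergence-theorem argument that uses the star-shapedness of $K$ with respect to $B(x_K,\rho_K)$ from \eqref{eq:poly_shape_reg}. This avoids any face-by-face argument, which matters because face measures are not bounded below and the number of faces is not bounded.

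Take the vector field $\vec m(x):=x-x_K$. We have $\nabla\cdot\vec m=d$, and $\Norm{\vec m}_{\leb\infty(K)}\le h_K$. Star-shapedness with respect to the whole ball gives, on every face of $\partial K$,
\[
\vec m\cdot\vec n_{\partial K}\ge\rho_K\ge\vartheta_\ast h_K .
\]
To see this, let $x$ lie on a face with outward normal $\vec n$. The point $y:=x_K-\rho_K\vec n$ belongs to $\overline{B(x_K,\rho_K)}$, and the segment $[y,x]$ lies in $\overline K$. Hence $(x-y)\cdot\vec n\ge0$, which is $\vec m\cdot\vec n\ge\rho_K$.

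Now apply the divergence theorem to $v^2\vec m$ for $v\in\sobh1(K)$, first for smooth $v$ and then by density:
\[
\vartheta_\ast h_K\Norm{v}^2_{\leb2(\partial K)}\le\int_{\partial K}v^2\,\vec m\cdot\vec n=\int_K\bigl(d\,v^2+2v\,\nabla v\cdot\vec m\bigr).
\]
The last integral is bounded by $d\Norm{v}^2_{\leb2(K)}+2h_K\Norm{v}_{\leb2(K)}\Norm{\nabla v}_{\leb2(K)}$. Divide by $\vartheta_\ast h_K$ and use Young's inequality. This gives \eqref{eq:poly_trace} with $C_{\mathrm{tr}}=(d+1)/\vartheta_\ast$, which depends only on $\vartheta_\ast$ and $d$.

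For the inverse estimate \eqref{eq:poly_inverse}, I would use the sandwich $B(x_K,\rho_K)\subset K\subset B(x_K,h_K)$ with $\rho_K\ge\vartheta_\ast h_K$. Norms of polynomials on the two balls are comparable: for $q\in\mathbb P_p$,
\[
\Norm{q}_{\leb2(B(x_K,h_K))}\le C(p,d,\vartheta_\ast)\Norm{q}_{\leb2(B(x_K,\rho_K))}.
\]
This follows by scaling to a unit ball and using equivalence of norms on the finite-dimensional space $\mathbb P_p$. Together with the standard $p^2$ Markov-type inverse estimate on a ball of radius $h_K$ (by scaling), this yields
\[
\Norm{\nabla v}_{\leb2(K)}\le\Norm{\nabla v}_{\leb2(B(x_K,h_K))}\lesssim h_K^{-1}\Norm{v}_{\leb2(B(x_K,h_K))}\lesssim h_K^{-1}\Norm{v}_{\leb2(B(x_K,\rho_K))}\le h_K^{-1}\Norm{v}_{\leb2(K)} .
\]

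This step is the main obstacle. A sharp explicit $p^2$ dependence is not delivered by the crude norm-equivalence constant, which grows exponentially in $p$. Since $p$ is fixed here, I would absorb the $p$-dependence into $C_{\mathrm{inv}}$ and remark that a sharp version would use $hp$-results on star-shaped domains. Finally, \eqref{eq:poly_trace_inverse} follows by inserting \eqref{eq:poly_inverse} into \eqref{eq:poly_trace}:
\[
\Norm{v}^2_{\leb2(\partial K)}\le C_{\mathrm{tr}}\bigl(1+C_{\mathrm{inv}}^2p^4\bigr)h_K^{-1}\Norm{v}^2_{\leb2(K)} .
\]
No quantity in this bound refers to individual faces.
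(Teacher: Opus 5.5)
The paper states this lemma without proof, so there is no argument of the authors to compare against. Your route is the natural one, but the geometric step as written has a sign error. With $y:=x_K-\rho_K\vec n$ one has $(x-y)\cdot\vec n=\vec m\cdot\vec n+\rho_K$. So $(x-y)\cdot\vec n\ge0$ only gives $\vec m\cdot\vec n\ge-\rho_K$, which is useless for the trace bound: you picked the point of the ball farthest from the face rather than the nearest.

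Take instead $y:=x_K+\rho_K\vec n$. Since $K$ is star-shaped with respect to a ball, it is Lipschitz. Hence, near a point $x$ in the relative interior of a face, $\overline K$ lies in the half-space $\{z:(z-x)\cdot\vec n\le0\}$. The segment $[y,x]\subset\overline K$ leaves $x$ in the direction $y-x$, so $(y-x)\cdot\vec n\le0$. This is exactly $\vec m\cdot\vec n\ge\rho_K\ge\vartheta_\ast h_K$ almost everywhere on $\partial K$; equivalently, $\vec m\cdot\vec n\ge\sup_{z\in B(x_K,\rho_K)}(z-x_K)\cdot\vec n=\rho_K$.

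With this correction the rest of your argument is fine. The identity $\int_{\partial K}v^2\,\vec m\cdot\vec n=\int_K(d\,v^2+2v\nabla v\cdot\vec m)$, the bound $|\vec m|\le h_K$ on $K$ and Young's inequality give \eqref{eq:poly_trace} with $C_{\mathrm{tr}}=(d+1)/\vartheta_\ast$. Independence from the faces is automatic because $\partial K$ is treated as a whole. This is the continuous form of the usual decomposition of $K$ into cones with apex $x_K$ over its faces, the cone height over a face being exactly $\vec m\cdot\vec n$.

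Your inverse estimate via $B(x_K,\rho_K)\subset K\subset B(x_K,h_K)$ and norm equivalence on $\mathbb P_p$ is also correct. Its constant depends non-explicitly on $p$, $d$ and $\vartheta_\ast$, and grows exponentially in $p$, as you note. The statement allows this, since $p$ is fixed and $C_{\mathrm{inv}}$ need only be independent of $K$ and $\ell$; the factor $p^2$ in \eqref{eq:poly_inverse} is then purely cosmetic. Inserting \eqref{eq:poly_inverse} into \eqref{eq:poly_trace} gives \eqref{eq:poly_trace_inverse} exactly as you state.
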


\subsection{Lifting operator and the DG form}
\label{subsec:dg_formulation}

Introduce the broken vector-polynomial space
\begin{equation}
  \Sigma_\ell
  :=
  \ensemble{
    \vec\tau\in\leb2(\W;\mathbb R^d)
  }{
    \vec\tau|_K\in\qb{\mathbb P_p(K)}^d
    \text{ for all }K\in\mathcal T_\ell
  }.
  \label{eq:lifting_space}
\end{equation}
The lifting
\[
  \mathcal R_\ell:
  \leb2(\mathcal E_\ell)
  \longrightarrow
  \Sigma_\ell
\]
is defined by
\begin{equation}
  \int_\W
  \mathcal R_\ell(\phi)\cdot\vec\tau
  \d x
  =
  -
  \sum_{e\in\mathcal E_\ell}
  \int_e
  \phi
  \avg{\vec\tau\cdot\vec n_e}
  \d s
  \qquad
  \forall\vec\tau\in\Sigma_\ell.
  \label{eq:lifting_def}
\end{equation}

\begin{lemma}[Stability of the lifting]
  \label{lem:lifting_stability}
  Under \eqref{eq:poly_shape_reg},
  \begin{equation}
    \Norm{\mathcal R_\ell(\phi)}_{\leb2(\W)}
    \lesssim
    \qp{
      \sum_{e\in\mathcal E_\ell}
      h_e^{-1}
      \Norm{\phi}_{\leb2(e)}^2
    }^{1/2},
    \label{eq:lifting_stability}
  \end{equation}
  with a constant independent of the number and relative sizes of the
  faces.
\end{lemma}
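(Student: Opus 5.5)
The approach is duality. Since $\mathcal R_\ell(\phi)\in\Sigma_\ell$, we have
\[
  \Norm{\mathcal R_\ell(\phi)}_{\leb2(\W)}
  =
  \sup_{0\neq\vec\tau\in\Sigma_\ell}
  \frac{\int_\W\mathcal R_\ell(\phi)\cdot\vec\tau\d x}{\Norm{\vec\tau}_{\leb2(\W)}},
\]
and we may even take $\vec\tau=\mathcal R_\ell(\phi)$ directly. Using the defining relation \eqref{eq:lifting_def}, the numerator equals $-\sum_{e}\int_e\phi\avg{\vec\tau\cdot\vec n_e}\d s$. On each face we apply Cauchy--Schwarz with the weight $h_e^{\mp1/2}$:
\[
  \norm{\int_\W\mathcal R_\ell(\phi)\cdot\vec\tau\d x}
  \le
  \qp{\sum_{e}h_e^{-1}\Norm{\phi}_{\leb2(e)}^2}^{1/2}
  \qp{\sum_{e}h_e\Norm{\avg{\vec\tau\cdot\vec n_e}}_{\leb2(e)}^2}^{1/2}.
\]
The proof therefore reduces to the discrete trace bound $\sum_e h_e\Norm{\avg{\vec\tau}}_{\leb2(e)}^2\lesssim\Norm{\vec\tau}_{\leb2(\W)}^2$ for $\vec\tau\in\Sigma_\ell$.

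For this bound, first use $|\vec n_e|=1$ and $\norm{\avg{v}}^2\le\frac12(\norm{v^-}^2+\norm{v^+}^2)$ on interior faces; on boundary faces the average is just the trace. Next, from the harmonic-mean definition \eqref{eq:face_scale}, $h_e\le 2\min\qc{h_{K^-},h_{K^+}}$, so that $h_e\le 2h_K$ for each element $K$ adjacent to $e$. Regrouping the face sum elementwise gives
\[
  \sum_e h_e\Norm{\avg{\vec\tau}}_{\leb2(e)}^2
  \le
  2\sum_{K\in\mathcal T_\ell}h_K\sum_{e\subset\partial K}\Norm{\vec\tau|_K}_{\leb2(e)}^2
  =
  2\sum_{K}h_K\Norm{\vec\tau|_K}_{\leb2(\partial K)}^2.
\]
The key point is that the faces of $K$ partition $\partial K$ up to null sets. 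Hence summing over faces produces exactly $\Norm{\cdot}_{\leb2(\partial K)}^2$, with no factor depending on the number of faces. Applying the polynomial trace-inverse estimate \eqref{eq:poly_trace_inverse} of Lemma~\ref{lem:poly_trace_inverse} componentwise then bounds this by $\lesssim\sum_K\Norm{\vec\tau}_{\leb2(K)}^2=\Norm{\vec\tau}_{\leb2(\W)}^2$.

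Finally, take $\vec\tau=\mathcal R_\ell(\phi)$ and divide by $\Norm{\mathcal R_\ell(\phi)}_{\leb2(\W)}$ (the case where it vanishes is trivial) to obtain \eqref{eq:lifting_stability}.

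The main obstacle is robustness: avoiding any dependence on face count or face size. Two ingredients handle it. Working with the full-boundary trace estimate \eqref{eq:poly_trace_inverse}, rather than face-by-face inverse inequalities that would involve $|e|$, removes the dependence on face size. The fact that $h_e$ is built from element diameters, not face measures, and is bounded by $2h_K$, removes the dependence on the number of faces. The constant then depends only on $p$, $d$ and $\vartheta_\ast$, inherited from Lemma~\ref{lem:poly_trace_inverse}.
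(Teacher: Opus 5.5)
Your proposal is correct and follows the same argument as the paper. You apply weighted Cauchy--Schwarz to the defining relation \eqref{eq:lifting_def}, use $h_e\le 2h_K$ from the harmonic-mean face scale, regroup into full element boundaries and invoke the polynomial trace--inverse estimate \eqref{eq:poly_trace_inverse}; testing directly with $\vec\tau=\mathcal R_\ell(\phi)$ instead of taking the supremum over $\Sigma_\ell$ is an immaterial variation.
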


\begin{proof}
  By \eqref{eq:lifting_def}, weighted Cauchy--Schwarz gives
  \[
    \left|
      \int_\W
      \mathcal R_\ell(\phi)\cdot\vec\tau
      \d x
    \right|
    \le
    \qp{
      \sum_{e\in\mathcal E_\ell}
      h_e^{-1}
      \Norm{\phi}_{\leb2(e)}^2
    }^{1/2}
    \qp{
      \sum_{e\in\mathcal E_\ell}
      h_e
      \Norm{
        \avg{\vec\tau\cdot\vec n_e}
      }_{\leb2(e)}^2
    }^{1/2}.
  \]
  Since $h_e\lesssim h_K$ for every face of $K$, summing over the
  complete element boundaries and using
  \eqref{eq:poly_trace_inverse} yields
  \[
    \sum_{e\in\mathcal E_\ell}
    h_e
    \Norm{
      \avg{\vec\tau\cdot\vec n_e}
    }_{\leb2(e)}^2
    \lesssim
    \sum_{K\in\mathcal T_\ell}
    h_K
    \Norm{\vec\tau}_{\leb2(\partial K)}^2
    \lesssim
    \Norm{\vec\tau}_{\leb2(\W)}^2.
  \]
  Taking the supremum over
  $\vec\tau\in\Sigma_\ell\setminus\qc{0}$ proves the result.
\end{proof}

In particular,
\begin{equation}
  \Norm{
    \mathcal R_\ell\qp{\jump{v_\ell}}
  }_{\leb2(\W)}
  \lesssim
  \qp{
    \sum_{e\in\mathcal E_\ell}
    h_e^{-1}
    \Norm{\jump{v_\ell}}_{\leb2(e)}^2
  }^{1/2}.
  \label{eq:lifting_jump_bound}
\end{equation}

For a symmetric positive-definite coefficient field $B$, define the
elemental diffusion scale
\begin{equation}
  \alpha_K(B)
  :=
  \esssup_{x\in K}
  \lambda_{\max}\qp{B(x)}
  \label{eq:element_diffusion_scale}
\end{equation}
and the corresponding face scale
\begin{equation}
  \alpha_e(B)
  :=
  \begin{cases}
    \max\qc{
      \alpha_{K^-}(B),
      \alpha_{K^+}(B)
    },
    &
    e=\partial K^-\cap\partial K^+
    \in\mathcal E_\ell^i,
    \\[1mm]
    \alpha_K(B),
    &
    e\subset\partial K
    \in\mathcal E_\ell^b.
  \end{cases}
  \label{eq:face_diffusion_scale}
\end{equation}
For $\gamma>0$, set
\begin{equation}
  \pen{e}{B}
  :=
  \gamma
  \frac{p^2}{h_e}
  \alpha_e(B).
  \label{eq:penalty_def}
\end{equation}

Let
\[
  \mathbb V_\ell
  :=
  V+V_\ell.
\]
For $w,v\in\mathbb V_\ell$, define the lifting-based symmetric
interior-penalty form
\begin{equation}
  \begin{split}
    \adgdiff{\ell}{B}{w}{v}
    &:=
    \int_\W
    B\nabla_\ell w\cdot\nabla_\ell v
    \d x
    +
    \int_\W
    B\nabla_\ell w
    \cdot
    \mathcal R_\ell\qp{\jump{v}}
    \d x
    \\
    &\qquad
    +
    \int_\W
    B\mathcal R_\ell\qp{\jump{w}}
    \cdot
    \nabla_\ell v
    \d x
    +
    \sum_{e\in\mathcal E_\ell}
    \int_e
    \pen{e}{B}
    \jump{w}\jump{v}
    \d s.
  \end{split}
  \label{eq:dg_diffusion_form}
\end{equation}
The full diffusion--reaction form is
\begin{equation}
  \adg{\ell}{B}{d}{w}{v}
  :=
  \adgdiff{\ell}{B}{w}{v}
  +
  \int_\W
  d\,wv
  \d x.
  \label{eq:dg_full_form}
\end{equation}

To relate \eqref{eq:dg_diffusion_form} to the usual face-flux
formulation, let
\[
  \Pi_\ell:
  \leb2(\W;\mathbb R^d)
  \longrightarrow
  \Sigma_\ell
\]
denote the elementwise $\leb2$-orthogonal projector.  Since
$\mathcal R_\ell\qp{\jump{v}}\in\Sigma_\ell$,
\begin{equation}
  \begin{split}
    \int_\W
    B\nabla_\ell w
    \cdot
    \mathcal R_\ell\qp{\jump{v}}
    \d x
    &=
    \int_\W
    \Pi_\ell\qp{B\nabla_\ell w}
    \cdot
    \mathcal R_\ell\qp{\jump{v}}
    \d x
    \\
    &=
    -
    \sum_{e\in\mathcal E_\ell}
    \int_e
    \avg{
      \Pi_\ell\qp{B\nabla_\ell w}
      \cdot\vec n_e
    }
    \jump{v}
    \d s.
  \end{split}
  \label{eq:lifting_projected_flux}
\end{equation}
Thus \eqref{eq:dg_diffusion_form} is precisely a symmetric
interior-penalty form in which the numerical normal flux is the
$\leb2$ projection of the physical flux.  In particular, no trace of
the possibly rough coefficient $B$ is required in the definition of
the method.

The natural norm is
\begin{equation}
  \begin{split}
    \dgnorm{\ell}{B}{d}{v}^2
    &:=
    \Norm{
      B^{1/2}\nabla_\ell v
    }_{\leb2(\W)}^2
    +
    \Norm{
      d^{1/2}v
    }_{\leb2(\W)}^2
    +
    \sum_{e\in\mathcal E_\ell}
    \Norm{
      \pen{e}{B}^{1/2}
      \jump{v}
    }_{\leb2(e)}^2.
  \end{split}
  \label{eq:dg_norm_def}
\end{equation}

\begin{theorem}[Coercivity and continuity]
  \label{thm:dg_coercive_continuous}
  Assume \eqref{eq:poly_shape_reg} and
  \eqref{eq:dg_discrete_coeff_bounds}.  There exists $\gamma_0>0$
  such that, for every $\gamma\ge\gamma_0$,
  \begin{equation}
    \adg{\ell}
         {\widetilde A_\ell}
         {\widetilde c_\ell}
         {v_\ell}
         {v_\ell}
    \ge
    C_{\mathrm{coer}}
    \dgnorm{\ell}
           {\widetilde A_\ell}
           {\widetilde c_\ell}
           {v_\ell}^2
    \qquad
    \forall v_\ell\in V_\ell,
    \label{eq:dg_coercive}
  \end{equation}
  and
  \begin{equation}
    \left|
      \adg{\ell}
           {\widetilde A_\ell}
           {\widetilde c_\ell}
           {w}
           {v}
    \right|
    \le
    C_{\mathrm{cont}}
    \dgnorm{\ell}
           {\widetilde A_\ell}
           {\widetilde c_\ell}
           {w}
    \dgnorm{\ell}
           {\widetilde A_\ell}
           {\widetilde c_\ell}
           {v}
    \label{eq:dg_continuity}
  \end{equation}
  for all $w,v\in\mathbb V_\ell$.  The constants are independent of
  $\ell$ and of the number and relative sizes of the faces.
\end{theorem}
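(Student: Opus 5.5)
The plan is to prove coercivity by expanding $\adg{\ell}{\widetilde A_\ell}{\widetilde c_\ell}{v_\ell}{v_\ell}$ and absorbing the two lifting cross terms. Continuity is then obtained term by term with Cauchy--Schwarz, the lifting bound \eqref{eq:lifting_jump_bound}, and the coefficient bounds \eqref{eq:dg_discrete_coeff_bounds}.

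\textbf{Lifting estimate.} Write $B=\widetilde A_\ell$. The key auxiliary estimate is
\begin{equation*}
  \Norm{B^{1/2}\mathcal R_\ell\qp{\jump{v}}}_{\leb2(\W)}^2
  \le
  \sum_{K}\alpha_K(B)\Norm{\mathcal R_\ell\qp{\jump{v}}}_{\leb2(K)}^2
  \lesssim
  \sum_{e\in\mathcal E_\ell}\alpha_e(B)h_e^{-1}\Norm{\jump{v}}_{\leb2(e)}^2
  =
  \frac{1}{\gamma p^2}\sum_{e}\Norm{\pen{e}{B}^{1/2}\jump{v}}_{\leb2(e)}^2 .
\end{equation*}
Lemma~\ref{lem:lifting_stability} gives only a global $\leb2$ bound, so it cannot be used directly for the middle inequality. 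I would instead localise its proof. Test \eqref{eq:lifting_def} with $\vec\tau=\alpha_K(B)\,\mathcal R_\ell(\jump{v})$ elementwise; this is admissible because $\alpha_K(B)$ is piecewise constant, so $\vec\tau\in\Sigma_\ell$. Then apply weighted Cauchy--Schwarz on each face with weights $\alpha_e h_e^{-1}$ and $\alpha_e^{-1}h_e$. Because $\alpha_{K^\pm}\le\alpha_e$, we have $\alpha_e^{-1}\alpha_{K^\pm}^2\le\alpha_{K^\pm}$. Together with $h_e\lesssim h_{K^\pm}$ (harmonic mean $\le 2\min$) and \eqref{eq:poly_trace_inverse}, this bounds the $\vec\tau$ side by $\sum_K\alpha_K\Norm{\mathcal R_\ell}_{\leb2(K)}^2$, which gives the claim.

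A subtlety is that \eqref{eq:poly_trace_inverse} carries a $p$-dependent constant. The penalty factor $p^2$ matches the $p^2$ in the inverse estimate, so with $p$ fixed the hidden constant is simply absorbed into $\gamma_0$. Crucially, everything is summed over full element boundaries, so the constant does not depend on the number or size of faces.

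\textbf{Coercivity.} This is the main step. For $v_\ell\in V_\ell$,
\[
  a(v_\ell,v_\ell)
  =
  \Norm{B^{1/2}\nabla_\ell v_\ell}^2
  +2\int B\nabla_\ell v_\ell\cdot\mathcal R_\ell(\jump{v_\ell})
  +\sum_e\Norm{\pen{e}{B}^{1/2}\jump{v_\ell}}^2
  +\Norm{\widetilde c_\ell^{1/2}v_\ell}^2 .
\]
I would bound the cross term by $\varepsilon\Norm{B^{1/2}\nabla_\ell v_\ell}^2+\varepsilon^{-1}\Norm{B^{1/2}\mathcal R_\ell(\jump{v_\ell})}^2$, using Cauchy--Schwarz in the $B$-inner product pointwise. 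Taking $\varepsilon=1/2$ and inserting the lifting estimate gives
\[
  a(v_\ell,v_\ell)
  \ge
  \tfrac12\Norm{B^{1/2}\nabla_\ell v_\ell}^2
  +\bigl(1-\tfrac{C}{\gamma}\bigr)\sum_e\Norm{\pen{e}{B}^{1/2}\jump{v_\ell}}^2
  +\Norm{\widetilde c_\ell^{1/2}v_\ell}^2 .
\]
Choosing $\gamma_0=2C$ then yields $C_{\mathrm{coer}}=1/2$. The constant $C$ depends only on $\vartheta_\ast$, $d$ and $p$. It does not depend on $\underline a$ or $\overline a$, because the scale $\alpha_K$ appears consistently on both sides of the lifting estimate.

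\textbf{Continuity.} For $w,v\in\mathbb V_\ell$, the volume, reaction and penalty terms are bounded directly by Cauchy--Schwarz. For the cross term,
\[
  \int B\nabla_\ell w\cdot\mathcal R_\ell(\jump{v})
  \le
  \Norm{B^{1/2}\nabla_\ell w}\,\Norm{B^{1/2}\mathcal R_\ell(\jump{v})}
  \lesssim
  \gamma^{-1/2}\,\dgnorm{\ell}{B}{\widetilde c_\ell}{w}\,\dgnorm{\ell}{B}{\widetilde c_\ell}{v},
\]
where the last step uses the lifting estimate. That estimate holds for arbitrary $\jump{v}\in\leb2(\mathcal E_\ell)$, since $\mathcal R_\ell$ maps into $\Sigma_\ell$ and the inverse inequality is applied only to the test function $\vec\tau\in\Sigma_\ell$, not to $v$. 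Continuity therefore holds on all of $\mathbb V_\ell$ without needing traces of $\nabla w$ or of $B$.

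The main obstacle is the localised, coefficient-weighted lifting bound. I expect it to require the careful choice of face weights $\alpha_e=\max\alpha_{K^\pm}$ and the harmonic-mean face scale $h_e$.
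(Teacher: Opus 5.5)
Your proof is correct, but at the one non-trivial point, the bound on the lifting cross terms, it takes a different route from the paper.

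The paper reuses the global estimate of Lemma~\ref{lem:lifting_stability} together with the uniform bounds \eqref{eq:dg_discrete_coeff_bounds}. It combines $\|\widetilde A_\ell^{1/2}\mathcal R_\ell(\jump{v})\|_{L^2(\Omega)}\le\overline a^{1/2}\|\mathcal R_\ell(\jump{v})\|_{L^2(\Omega)}$ with $\alpha_e(\widetilde A_\ell)\ge\underline a$. Each lifting term is then bounded by a multiple of $(\overline a/\underline a)^{1/2}\gamma^{-1/2}$ times the product of DG norms, and Young's inequality finishes coercivity. This is shorter and suffices for the statement as posed, whose constants are only claimed independent of $\ell$ and of the face geometry. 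The price is that $\gamma_0$ and $C_{\mathrm{cont}}$ depend on the contrast $\overline a/\underline a$.

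Your localised argument is the standard contrast-robust version. You test \eqref{eq:lifting_def} with $\vec\tau|_K=\alpha_K\mathcal R_\ell(\jump{v})|_K$, weight the face integrals by $\alpha_e h_e^{-1}$ and $\alpha_e^{-1}h_e$, and use $\alpha_{K^\pm}^2/\alpha_e\le\alpha_{K^\pm}$ and $h_e\le 2\min\{h_{K^-},h_{K^+}\}$. This genuinely exploits the choice $\alpha_e=\max\{\alpha_{K^-},\alpha_{K^+}\}$ in the penalty, which the paper's proof never needs. It yields $\gamma_0$ depending only on $\vartheta_\ast$, $d$ and $p$, with $C_{\mathrm{coer}}=1/2$. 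The continuity part, including the observation that the inverse estimate is applied only to test functions in $\Sigma_\ell$, is the same as the paper's.

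One minor bookkeeping point concerns the polynomial degree. Combining \eqref{eq:poly_trace} with \eqref{eq:poly_inverse} gives $\|v\|_{L^2(\partial K)}^2\lesssim p^4h_K^{-1}\|v\|_{L^2(K)}^2$. So the factor $p^2$ in the penalty does not fully match the inverse-estimate constant, and your $\gamma_0$ grows like $p^2$. Since $p$ is fixed this is harmless, as you note.
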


\begin{proof}
  The volume and reaction terms are controlled directly by
  \eqref{eq:dg_norm_def}.  For the lifting terms,
  \eqref{eq:lifting_stability} and the uniform coefficient bounds give
  \[
    \left|
      \int_\W
      \widetilde A_\ell\nabla_\ell w
      \cdot
      \mathcal R_\ell\qp{\jump{v}}
      \d x
    \right|
    \lesssim
    \dgnorm{\ell}
           {\widetilde A_\ell}
           {\widetilde c_\ell}
           {w}
    \dgnorm{\ell}
           {\widetilde A_\ell}
           {\widetilde c_\ell}
           {v}.
  \]
  The same estimate holds with $w$ and $v$ interchanged, proving
  continuity.  Taking $w=v=v_\ell$ and applying Young's inequality
  to the two identical lifting terms shows that they can be absorbed
  by the volume and penalty terms once $\gamma$ is sufficiently
  large, which proves coercivity.
\end{proof}

Given $L_\ell\in V_\ell'$, find $U_\ell\in V_\ell$ such that
\begin{equation}
  \adg{\ell}
       {\widetilde A_\ell}
       {\widetilde c_\ell}
       {U_\ell}
       {v_\ell}
  =
  L_\ell(v_\ell)
  \qquad
  \forall v_\ell\in V_\ell.
  \label{eq:dg_problem_general_rhs}
\end{equation}
Equip $V_\ell'$ with
\begin{equation}
  \Norm{L_\ell}_{V_\ell'}
  :=
  \sup_{v_\ell\in V_\ell\setminus\qc{0}}
  \frac{
    \norm{L_\ell(v_\ell)}
  }{
    \dgnorm{\ell}
           {\widetilde A_\ell}
           {\widetilde c_\ell}
           {v_\ell}
  }.
  \label{eq:dual_norm_discrete}
\end{equation}

\begin{corollary}[Discrete well-posedness and stability]
  \label{cor:dg_discrete_wellposed}
  Under the hypotheses of
  Theorem~\ref{thm:dg_coercive_continuous},
  \eqref{eq:dg_problem_general_rhs} has a unique solution and
  \begin{equation}
    \dgnorm{\ell}
           {\widetilde A_\ell}
           {\widetilde c_\ell}
           {U_\ell}
    \lesssim
    \Norm{L_\ell}_{V_\ell'}.
    \label{eq:dg_stability_general_rhs}
  \end{equation}
\end{corollary}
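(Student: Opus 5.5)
This is a direct application of the Lax--Milgram lemma on the finite-dimensional space $V_\ell$, using Theorem~\ref{thm:dg_coercive_continuous}. First I will check that $\dgnorm{\ell}{\widetilde A_\ell}{\widetilde c_\ell}{\cdot}$ is a norm on $V_\ell$, and not just a seminorm. The reaction contribution $\Norm{\widetilde c_\ell^{1/2}v_\ell}_{\leb2(\W)}^2$ in \eqref{eq:dg_norm_def} satisfies
\[
\Norm{\widetilde c_\ell^{1/2}v_\ell}_{\leb2(\W)}^2\ge\underline c\Norm{v_\ell}_{\leb2(\W)}^2,
\]
and $\underline c>0$ by \eqref{eq:dg_discrete_coeff_bounds}. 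Hence the DG norm vanishes only for $v_\ell=0$. The penalty term also gives definiteness, through the boundary jumps, but this is not needed. Since $V_\ell$ is finite dimensional, it is then a Hilbert space with respect to this norm, and every linear functional $L_\ell$ on it is bounded. Consequently $\Norm{L_\ell}_{V_\ell'}$ in \eqref{eq:dual_norm_discrete} is finite.

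Next I restrict \eqref{eq:dg_continuity} to $w,v\in V_\ell\subset\mathbb V_\ell$, which gives boundedness of the form. Together with coercivity \eqref{eq:dg_coercive} for $\gamma\ge\gamma_0$, Lax--Milgram yields a unique $U_\ell$. One can argue even more elementarily. Coercivity makes the square stiffness matrix injective: if the form vanishes against every test function, then in particular $a(U_\ell,U_\ell)=0$, which forces $U_\ell=0$. An injective square matrix is invertible, which gives existence.

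For the stability bound I test \eqref{eq:dg_problem_general_rhs} with $v_\ell=U_\ell$:
\[
C_{\mathrm{coer}}\dgnorm{\ell}{\widetilde A_\ell}{\widetilde c_\ell}{U_\ell}^2\le\adg{\ell}{\widetilde A_\ell}{\widetilde c_\ell}{U_\ell}{U_\ell}=L_\ell(U_\ell)\le\Norm{L_\ell}_{V_\ell'}\dgnorm{\ell}{\widetilde A_\ell}{\widetilde c_\ell}{U_\ell}.
\]
Dividing by the norm (the case $U_\ell=0$ is trivial) gives \eqref{eq:dg_stability_general_rhs} with hidden constant $C_{\mathrm{coer}}^{-1}$. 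This constant is independent of $\ell$ and of the face geometry, by Theorem~\ref{thm:dg_coercive_continuous}.

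I do not expect any genuine obstacle. The only points needing care are the definiteness of the DG norm, which is needed so that the dual norm makes sense, and noting that coercivity holds only on $V_\ell$. That suffices here because trial and test functions both lie in $V_\ell$.
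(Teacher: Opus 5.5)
Your argument is correct and is the route the paper intends. The paper gives no separate proof, treating the corollary as an immediate Lax--Milgram consequence of Theorem~\ref{thm:dg_coercive_continuous} on the finite-dimensional space $V_\ell$; testing with $v_\ell=U_\ell$ then gives \eqref{eq:dg_stability_general_rhs} with constant $C_{\mathrm{coer}}^{-1}$, and your check that $\underline c>0$ makes the DG norm definite, so that \eqref{eq:dual_norm_discrete} is meaningful, is a detail the paper leaves implicit.
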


We therefore define
\begin{equation}
  \Sdisc{\ell}
         {\widetilde A_\ell}
         {\widetilde c_\ell}
  :
  V_\ell'
  \longrightarrow
  V_\ell,
  \qquad
  \Sdisc{\ell}
         {\widetilde A_\ell}
         {\widetilde c_\ell}
  L_\ell
  :=
  U_\ell.
  \label{eq:discrete_solution_operator}
\end{equation}
For the forward problem,
\begin{equation}
  F_\ell(v_\ell)
  :=
  \ltwop{F}{v_\ell}
  =
  \int_\W
  Fv_\ell
  \d x.
  \label{eq:forward_rhs_restriction}
\end{equation}
The same discrete solution operator will later be used for the latent
SPDE with a Gaussian load functional.

\subsection{Strang decomposition}
\label{subsec:dg_strang}

Let
\[
  A
  \in
  \leb{\infty}
  \qp{
    \W;
    \mathbb R^{d\times d}_{\mathrm{sym}}
  },
  \qquad
  c\in\leb{\infty}(\W),
\]
satisfy
\begin{equation}
  \underline a\norm{\xi}^2
  \le
  \xi^\top A(x)\xi
  \le
  \overline a\norm{\xi}^2,
  \qquad
  0<\underline c
  \le
  c(x)
  \le
  \overline c.
  \label{eq:continuum_coeff_elliptic}
\end{equation}
Let $u\in V$ solve
\begin{equation}
  \acont{A}{c}{u}{v}
  =
  \ltwop{F}{v}
  \qquad
  \forall v\in V,
  \label{eq:continuum_target_problem}
\end{equation}
or equivalently
\begin{equation}
  u
  =
  \Scont{A}{c}F.
  \label{eq:continuous_solution_operator}
\end{equation}
The discrete solution is
\begin{equation}
  u_\ell
  :=
  \Sdisc{\ell}
         {\widetilde A_\ell}
         {\widetilde c_\ell}
  F_\ell.
  \label{eq:forward_discrete_solution_operator}
\end{equation}

For $z\in\mathbb V_\ell$ and $w_\ell\in V_\ell$, define
\begin{equation}
  \mathfrak C_\ell(z;w_\ell)
  :=
  F_\ell(w_\ell)
  -
  \adg{\ell}
       {\widetilde A_\ell}
       {\widetilde c_\ell}
       {z}
       {w_\ell}.
  \label{eq:consistency_defect}
\end{equation}

\begin{theorem}[Abstract Strang lemma]
  \label{thm:strang_abstract}
  Under the hypotheses of
  Theorem~\ref{thm:dg_coercive_continuous},
  \begin{equation}
    \begin{split}
      \dgnorm{\ell}
             {\widetilde A_\ell}
             {\widetilde c_\ell}
             {u-u_\ell}
      \lesssim
      \inf_{v_\ell\in V_\ell}
      \Bigg\{
      &
      \dgnorm{\ell}
             {\widetilde A_\ell}
             {\widetilde c_\ell}
             {u-v_\ell}
             +
      \sup_{w_\ell\in V_\ell\setminus\qc{0}}
      \frac{
        \norm{
          \mathfrak C_\ell(v_\ell;w_\ell)
        }
      }{
        \dgnorm{\ell}
               {\widetilde A_\ell}
               {\widetilde c_\ell}
               {w_\ell}
      }
      \Bigg\}.
    \end{split}
    \label{eq:strang_abstract}
  \end{equation}
\end{theorem}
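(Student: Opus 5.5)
Fix an arbitrary $v_\ell\in V_\ell$ and split the error as
\[
  u-u_\ell=(u-v_\ell)+(v_\ell-u_\ell).
\]
By the triangle inequality for the seminorm-type quantity $\dgnorm{\ell}{\widetilde A_\ell}{\widetilde c_\ell}{\cdot}$ on $\mathbb V_\ell$, it suffices to show that the discrete part $\theta_\ell:=u_\ell-v_\ell\in V_\ell$ is controlled by the supremum of the consistency defect in \eqref{eq:strang_abstract}. Once that is done, taking the infimum over $v_\ell$ gives the claim.

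For the discrete part, I will use the coercivity \eqref{eq:dg_coercive} of Theorem~\ref{thm:dg_coercive_continuous}, which is valid since $\theta_\ell\in V_\ell$. This gives
\[
  C_{\mathrm{coer}}\dgnorm{\ell}{\widetilde A_\ell}{\widetilde c_\ell}{\theta_\ell}^2
  \le \adg{\ell}{\widetilde A_\ell}{\widetilde c_\ell}{u_\ell-v_\ell}{\theta_\ell}.
\]
The key step is then to use the defining equation \eqref{eq:dg_problem_general_rhs} with $L_\ell=F_\ell$ and test function $\theta_\ell$. This replaces $\adg{\ell}{\widetilde A_\ell}{\widetilde c_\ell}{u_\ell}{\theta_\ell}$ by $F_\ell(\theta_\ell)$, so that by definition \eqref{eq:consistency_defect} the right-hand side is exactly $\mathfrak C_\ell(v_\ell;\theta_\ell)$. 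Bounding $|\mathfrak C_\ell(v_\ell;\theta_\ell)|$ by the supremum over $w_\ell$ times $\dgnorm{\ell}{\widetilde A_\ell}{\widetilde c_\ell}{\theta_\ell}$ and dividing (the case $\theta_\ell=0$ is trivial) yields
\[
  \dgnorm{\ell}{\widetilde A_\ell}{\widetilde c_\ell}{\theta_\ell}\le C_{\mathrm{coer}}^{-1}\sup_{w_\ell}\frac{|\mathfrak C_\ell(v_\ell;w_\ell)|}{\dgnorm{\ell}{\widetilde A_\ell}{\widetilde c_\ell}{w_\ell}}.
\]

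There is essentially no obstacle here. The continuity \eqref{eq:dg_continuity} is not even needed, because the consistency defect is measured at the approximant $v_\ell$ itself rather than at $u$. The only point to check is that the norm \eqref{eq:dg_norm_def} is well defined on $u\in V$. This holds since $\jump{u}=0$ on interior faces and $u=0$ on $\partial\W$, so the jump terms vanish. Also, the hidden constant depends only on $C_{\mathrm{coer}}$, hence it is independent of $\ell$ and of the face configuration.
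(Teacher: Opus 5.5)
Your proposal is correct. Coercivity applied to $\theta_\ell=u_\ell-v_\ell\in V_\ell$, the discrete equation identifying the bilinear form at $(\theta_\ell,\theta_\ell)$ with $\mathfrak C_\ell(v_\ell;\theta_\ell)$, and the triangle inequality give the bound with constant $\max\{1,C_{\mathrm{coer}}^{-1}\}$; this is the standard argument. The paper states this lemma without a separate proof and uses continuity only afterwards, in the proof of Theorem~\ref{thm:strang_representation}, so you are right that continuity is not needed here, and your route matches what the paper implicitly relies on.
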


There are now two distinct consistency effects.  The first is the
small inconsistency introduced by the lifted, equivalently projected,
numerical flux.  Define
\begin{equation}
  \mathfrak D_\ell(u)
  :=
  \sup_{w_\ell\in V_\ell\setminus\qc{0}}
  \frac{
    \left|
      F_\ell(w_\ell)
      -
      \adg{\ell}{A}{c}{u}{w_\ell}
    \right|
  }{
    \dgnorm{\ell}
           {\widetilde A_\ell}
           {\widetilde c_\ell}
           {w_\ell}
  }.
  \label{eq:discretisation_consistency_defect}
\end{equation}
The second is the error caused by replacing $(A,c)$ by the represented
pair $(\widetilde A_\ell,\widetilde c_\ell)$:
\begin{equation}
  \mathfrak E_\ell(u)
  :=
  \sup_{w_\ell\in V_\ell\setminus\qc{0}}
  \frac{
    \left|
      \adg{\ell}{A}{c}{u}{w_\ell}
      -
      \adg{\ell}
           {\widetilde A_\ell}
           {\widetilde c_\ell}
           {u}{w_\ell}
    \right|
  }{
    \dgnorm{\ell}
           {\widetilde A_\ell}
           {\widetilde c_\ell}
           {w_\ell}
  }.
  \label{eq:representation_defect}
\end{equation}

\begin{theorem}[Strang decomposition with representation defect]
  \label{thm:strang_representation}
  Under the preceding assumptions,
  \begin{equation}
    \begin{split}
      \dgnorm{\ell}
             {\widetilde A_\ell}
             {\widetilde c_\ell}
             {u-u_\ell}
      \lesssim
      &
      \inf_{v_\ell\in V_\ell}
      \dgnorm{\ell}
             {\widetilde A_\ell}
             {\widetilde c_\ell}
             {u-v_\ell}
             +
      \mathfrak D_\ell(u)
      +
      \mathfrak E_\ell(u).
    \end{split}
    \label{eq:strang_representation}
  \end{equation}
\end{theorem}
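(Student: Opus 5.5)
The plan is to derive Theorem~\ref{thm:strang_representation} directly from the abstract Strang lemma, Theorem~\ref{thm:strang_abstract}, by splitting the consistency defect $\mathfrak C_\ell(v_\ell;w_\ell)$ for an arbitrary $v_\ell\in V_\ell$. The idea is to add and subtract two quantities: $\adg{\ell}{\widetilde A_\ell}{\widetilde c_\ell}{u}{w_\ell}$ and $\adg{\ell}{A}{c}{u}{w_\ell}$. Both make sense, since $u\in V\subset\mathbb V_\ell$. This gives
\[
  \mathfrak C_\ell(v_\ell;w_\ell)
  =
  \bigl[F_\ell(w_\ell)-\adg{\ell}{A}{c}{u}{w_\ell}\bigr]
  +
  \bigl[\adg{\ell}{A}{c}{u}{w_\ell}-\adg{\ell}{\widetilde A_\ell}{\widetilde c_\ell}{u}{w_\ell}\bigr]
  +
  \adg{\ell}{\widetilde A_\ell}{\widetilde c_\ell}{u-v_\ell}{w_\ell}.
\]

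We then divide by $\dgnorm{\ell}{\widetilde A_\ell}{\widetilde c_\ell}{w_\ell}$ and take the supremum over $w_\ell$. By definitions \eqref{eq:discretisation_consistency_defect} and \eqref{eq:representation_defect}, the first two brackets are bounded by $\mathfrak D_\ell(u)$ and $\mathfrak E_\ell(u)$ respectively. For the third term we use the continuity estimate \eqref{eq:dg_continuity} of Theorem~\ref{thm:dg_coercive_continuous}. This applies because $u-v_\ell\in\mathbb V_\ell$ and $w_\ell\in V_\ell\subset\mathbb V_\ell$, and it bounds the third term by $C_{\mathrm{cont}}\dgnorm{\ell}{\widetilde A_\ell}{\widetilde c_\ell}{u-v_\ell}$.

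Substituting into \eqref{eq:strang_abstract}, the infimum over $v_\ell$ now acts on $(1+C_{\mathrm{cont}})\dgnorm{\ell}{\widetilde A_\ell}{\widetilde c_\ell}{u-v_\ell}+\mathfrak D_\ell(u)+\mathfrak E_\ell(u)$. The last two terms do not depend on $v_\ell$, so they come out of the infimum, which yields \eqref{eq:strang_representation}.

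For completeness, if Theorem~\ref{thm:strang_abstract} is to be reproved, the argument is standard. Write $u-u_\ell=(u-v_\ell)+(v_\ell-u_\ell)$ and use the triangle inequality. Then apply the discrete coercivity \eqref{eq:dg_coercive} to $\phi_\ell:=u_\ell-v_\ell\in V_\ell$. The definition of $u_\ell$ gives $\adg{\ell}{\widetilde A_\ell}{\widetilde c_\ell}{u_\ell-v_\ell}{\phi_\ell}=\mathfrak C_\ell(v_\ell;\phi_\ell)$, and hence $\dgnorm{\ell}{\widetilde A_\ell}{\widetilde c_\ell}{\phi_\ell}\le C_{\mathrm{coer}}^{-1}\sup_{w_\ell}\norm{\mathfrak C_\ell(v_\ell;w_\ell)}/\dgnorm{\ell}{\widetilde A_\ell}{\widetilde c_\ell}{w_\ell}$. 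Nothing in this argument requires Galerkin orthogonality.

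The only point needing care is that the forms and the norm $\dgnorm{\ell}{\cdot}{\cdot}{\cdot}$ are well defined on $u\in V$, and that continuity holds on all of $\mathbb V_\ell$ rather than just on $V_\ell$. Theorem~\ref{thm:dg_coercive_continuous} asserts continuity on $\mathbb V_\ell$. For $u\in V$ the jumps vanish, so the lifting and penalty terms drop out, and the lifting stability of Lemma~\ref{lem:poly_trace_inverse}/Lemma~\ref{lem:lifting_stability} covers the $v_\ell$ part. Because the lifting formulation involves no trace of $\nabla u$ or of the coefficient, no additional regularity of $u$ is needed for $\mathfrak D_\ell$ and $\mathfrak E_\ell$ to be finite at this stage. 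Bounding them in terms of projection errors and of the coefficient error is deferred to later results.
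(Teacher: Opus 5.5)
Your proposal is correct and matches the paper's proof essentially step for step. It uses the same three-term splitting of $\mathfrak C_\ell(v_\ell;w_\ell)$, bounds the third term by continuity \eqref{eq:dg_continuity}, and then takes the supremum and substitutes into Theorem~\ref{thm:strang_abstract}. Your added sketch of the abstract lemma, applying coercivity to $u_\ell-v_\ell$, is the standard argument that the paper leaves implicit.
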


\begin{proof}
  For arbitrary $v_\ell,w_\ell\in V_\ell$,
  \begin{align*}
    \mathfrak C_\ell(v_\ell;w_\ell)
    &=
    \qb{
      F_\ell(w_\ell)
      -
      \adg{\ell}{A}{c}{u}{w_\ell}
    }
    \\
    &\quad
    +
    \qb{
      \adg{\ell}{A}{c}{u}{w_\ell}
      -
      \adg{\ell}
           {\widetilde A_\ell}
           {\widetilde c_\ell}
           {u}{w_\ell}
    }
    \\
    &\quad
    +
    \adg{\ell}
         {\widetilde A_\ell}
         {\widetilde c_\ell}
         {u-v_\ell}
         {w_\ell}.
  \end{align*}
  Apply \eqref{eq:dg_continuity}, divide by the DG norm of
  $w_\ell$, take the supremum, and substitute the result into
  \eqref{eq:strang_abstract}.
\end{proof}

\subsection{Approximation and lifting consistency}
\label{subsec:dg_consistency}

The regularity assumptions can be expressed directly in
$W^{1+s,r}$ rather than obtained indirectly from Sobolev embedding.
For $r\in[2,\infty]$ and $s>0$, write
\[
  \Norm{z}_{W^{s,r}(\mathcal T_\ell)}
  :=
  \qp{
    \sum_{K\in\mathcal T_\ell}
    \Norm{z}_{W^{s,r}(K)}^r
  }^{1/r},
\]
with the usual modification when $r=\infty$.

For $0<s\le p$ and $u\in W^{1+s,r}(\W)$, standard polynomial
approximation on uniformly star-shaped polytopes gives
\begin{equation}
  \inf_{v_\ell\in V_\ell}
  \dgnorm{\ell}
         {\widetilde A_\ell}
         {\widetilde c_\ell}
         {u-v_\ell}
  \lesssim
  h_\ell^s
  \Norm{u}_{W^{1+s,r}(\W)}.
  \label{eq:dg_approximation_term}
\end{equation}

The lifting consistency error is controlled by best approximation of
the physical flux $\vec q := A\nabla u$. Let $\Pi_\ell$ be the
elementwise $\leb2$ projector introduced above.

\begin{lemma}[Consistency of the projected flux]
  \label{lem:dg_consistency_exact}
  Let $r\in[2,\infty]$ and
  \[
    \frac1r<s\le p.
  \]
  Assume
  \[
    \vec q
    =
    A\nabla u
    \in
    W^{s,r}(\mathcal T_\ell).
  \]
  Then, for every $w_\ell\in V_\ell$,
  \begin{equation}
    \begin{split}
      F_\ell(w_\ell)
      -
      \adg{\ell}{A}{c}{u}{w_\ell}
      =
      -
      \sum_{e\in\mathcal E_\ell}
      \int_e
      \avg{
        \qp{
          \vec q-\Pi_\ell\vec q
        }
        \cdot\vec n_e
      }
      \jump{w_\ell}
      \d s.
    \end{split}
    \label{eq:dg_consistency_exact}
  \end{equation}
  Consequently,
  \begin{equation}
    \mathfrak D_\ell(u)
    \lesssim
    h_\ell^s
    \Norm{
      A\nabla u
    }_{W^{s,r}(\mathcal T_\ell)}.
    \label{eq:projected_flux_consistency_bound}
  \end{equation}
\end{lemma}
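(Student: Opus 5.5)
The plan is to prove the identity \eqref{eq:dg_consistency_exact} by elementwise integration by parts, and then to bound its right-hand side with a weighted Cauchy--Schwarz inequality, a scaled trace inequality and polynomial approximation of $\vec q$.

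\textbf{Step 1: simplify $\adg{\ell}{A}{c}{u}{w_\ell}$.} Since $u\in V=\hoz(\W)$, we have $\jump{u}=0$ on every face, including boundary faces. Hence $\mathcal R_\ell(\jump{u})=0$ and the penalty term vanishes, so
\[
  \adg{\ell}{A}{c}{u}{w_\ell}
  =
  \int_\W \vec q\cdot\nabla_\ell w_\ell \d x
  +
  \int_\W \vec q\cdot\mathcal R_\ell(\jump{w_\ell})\d x
  +
  \int_\W c\,u\,w_\ell \d x .
\]
For the lifting term I use \eqref{eq:lifting_projected_flux} with $B=A$. It gives $-\sum_e\int_e\avg{\Pi_\ell\vec q\cdot\vec n_e}\jump{w_\ell}\d s$.

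\textbf{Step 2: the volume term.} The continuum equation holds in the distributional sense, so $-\nabla\cdot\vec q=F-cu\in\leb2(\W)$ and $\vec q\in H(\mathrm{div};\W)$. The assumption $\vec q\in W^{s,r}(\mathcal T_\ell)$ with $s>1/r$ gives $\vec q|_K$ a well-defined $\leb r$ trace on each face. Together with $H(\mathrm{div})$-conformity, this makes the normal component $\vec q\cdot\vec n_e$ single-valued on interior faces. Integrating by parts on each $K$ and applying the standard identity
\[
  \sum_K\int_{\partial K}\vec q\cdot\vec n_K\,w
  =
  \sum_e\int_e\avg{\vec q\cdot\vec n_e}\jump{w}
  +
  \sum_{e\in\mathcal E^i_\ell}\int_e\jump{\vec q\cdot\vec n_e}\avg{w},
\]
in which the last sum vanishes, yields
\[
  \int_\W\vec q\cdot\nabla_\ell w_\ell \d x
  =
  \int_\W (F-cu)\,w_\ell \d x
  +
  \sum_e\int_e\avg{\vec q\cdot\vec n_e}\jump{w_\ell}\d s .
\]
Substituting Steps 1 and 2 into $F_\ell(w_\ell)-\adg{\ell}{A}{c}{u}{w_\ell}$, the $F$ and $c$ terms cancel, which is exactly \eqref{eq:dg_consistency_exact}.

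\textbf{Step 3: the bound.} Write $\vec\eta:=\vec q-\Pi_\ell\vec q$. Weighted Cauchy--Schwarz with weights $\pen{e}{\widetilde A_\ell}^{\pm1/2}$ bounds the right-hand side of \eqref{eq:dg_consistency_exact} by
\[
  \Big(\sum_e \pen{e}{\widetilde A_\ell}^{-1}\Norm{\avg{\vec\eta\cdot\vec n_e}}_{\leb2(e)}^2\Big)^{1/2}
  \dgnorm{\ell}{\widetilde A_\ell}{\widetilde c_\ell}{w_\ell}.
\]
Using $\alpha_e\ge\underline a$ from \eqref{eq:dg_discrete_coeff_bounds} and $h_e\le 2\min\{h_{K^-},h_{K^+}\}$, the first factor is controlled by $\big(\sum_K h_K\Norm{\vec\eta|_K}_{\leb2(\partial K)}^2\big)^{1/2}$. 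It therefore remains to show
\[
  h_K^{1/2}\Norm{\vec\eta}_{\leb2(\partial K)}
  \lesssim
  h_K^{s+d(1/2-1/r)}\,|\vec q|_{W^{s,r}(K)} .
\]
If that holds, the discrete Hölder inequality gives
\[
  \sum_K h_K^{2s+d(1-2/r)}a_K^2
  \le
  h_\ell^{2s}\Big(\sum_K h_K^d\Big)^{1-2/r}\Big(\sum_K a_K^r\Big)^{2/r},
\]
and since $\sum_K h_K^d\lesssim|\W|$ by chunkiness, this yields \eqref{eq:projected_flux_consistency_bound}. The case $r=\infty$ is the obvious modification.

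\textbf{Main obstacle: the local estimate.} Lemma~\ref{lem:poly_trace_inverse} is an $\sobh1$ trace inequality, but $s$ may be smaller than $1$ and the regularity is measured in $W^{s,r}$. My plan is as follows.
\begin{itemize}
  \item Apply Hölder on $\partial K$: $\Norm{\vec\eta}_{\leb2(\partial K)}\le|\partial K|^{1/2-1/r}\Norm{\vec\eta}_{\leb r(\partial K)}$, with $|\partial K|\lesssim h_K^{d-1}$. This holds for star-shaped chunky elements even with many small faces, by the usual argument using the radial vector field $x-x_K$.
  \item Apply a scaled fractional trace inequality $\Norm{v}_{\leb r(\partial K)}^r\lesssim h_K^{-1}\Norm{v}_{\leb r(K)}^r+h_K^{sr-1}|v|_{W^{s,r}(K)}^r$, valid for $s>1/r$. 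I would obtain it by mapping to the reference ball-star-shaped configuration via the dilation $x\mapsto(x-x_K)/h_K$, where uniform constants follow from $\vartheta_\ast$.
  \item Finish with the $\leb r$-stability of $\Pi_\ell$ on chunky elements (norm equivalence on $\mathbb P_p$) and a Bramble--Hilbert/Dupont--Scott estimate on star-shaped domains, $\Norm{\vec\eta}_{\leb r(K)}+h_K^s|\vec\eta|_{W^{s,r}(K)}\lesssim h_K^s|\vec q|_{W^{s,r}(K)}$.
\end{itemize}
Combining the powers gives exactly the exponent $s+d(1/2-1/r)$ required above.
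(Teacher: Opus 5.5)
Your proposal is correct and follows the same route as the paper. The identity comes from elementwise integration by parts combined with the projected-flux form \eqref{eq:lifting_projected_flux} of the lifting term, and the bound comes from a weighted Cauchy--Schwarz inequality followed by trace and polynomial approximation estimates for $\vec q-\Pi_\ell\vec q$, which the paper compresses into a single line. Your extra details fill that line in faithfully: the justification of a single-valued normal flux via $\vec q\in H(\mathrm{div};\Omega)$, the fractional $W^{s,r}$ trace inequality (needed because \eqref{eq:poly_trace} is only an $H^1$ estimate and $s$ may be below $1$), the bound $|\partial K|\lesssim h_K^{d-1}$, and the discrete H\"older summation using $\sum_K h_K^d\lesssim|\Omega|$.
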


\begin{proof}
  Since $u\in H_0^1(\W)$, one has
  $\jump{u}=0$.  Elementwise integration by parts in
  \eqref{eq:continuum_target_problem} gives
  \[
    F_\ell(w_\ell)
    =
    \int_\W
    \vec q\cdot\nabla_\ell w_\ell
    \d x
    -
    \sum_{e\in\mathcal E_\ell}
    \int_e
    \avg{\vec q\cdot\vec n_e}
    \jump{w_\ell}
    \d s
    +
    \int_\W
    cuw_\ell
    \d x.
  \]
  On the other hand, by
  \eqref{eq:lifting_projected_flux},
  \[
    \adg{\ell}{A}{c}{u}{w_\ell}
    =
    \int_\W
    \vec q\cdot\nabla_\ell w_\ell
    \d x
    -
    \sum_{e\in\mathcal E_\ell}
    \int_e
    \avg{
      \Pi_\ell\vec q\cdot\vec n_e
    }
    \jump{w_\ell}
    \d s
    +
    \int_\W
    cuw_\ell
    \d x,
  \]
  which proves \eqref{eq:dg_consistency_exact}.

  Weighted Cauchy--Schwarz yields
  \begin{align*}
    &
    \left|
      F_\ell(w_\ell)
      -
      \adg{\ell}{A}{c}{u}{w_\ell}
    \right|
    \\
    &\qquad
    \le
    \qp{
      \sum_{e\in\mathcal E_\ell}
      h_e
      \Norm{
        \avg{
          \qp{
            \vec q-\Pi_\ell\vec q
          }
          \cdot\vec n_e
        }
      }_{\leb2(e)}^2
    }^{1/2}
    \qp{
      \sum_{e\in\mathcal E_\ell}
      h_e^{-1}
      \Norm{
        \jump{w_\ell}
      }_{\leb2(e)}^2
    }^{1/2}.
  \end{align*}
  The second factor is controlled by the DG norm.  Polynomial
  best approximation together with the trace estimate gives
  \[
    \qp{
      \sum_{e\in\mathcal E_\ell}
      h_e
      \Norm{
        \avg{
          \qp{
            \vec q-\Pi_\ell\vec q
          }
          \cdot\vec n_e
        }
      }_{\leb2(e)}^2
    }^{1/2}
    \lesssim
    h_\ell^s
    \Norm{\vec q}_{W^{s,r}(\mathcal T_\ell)}.
  \]
  Dividing by the DG norm of $w_\ell$ and taking the supremum proves
  the result.
\end{proof}

Combining \eqref{eq:dg_approximation_term} and
\eqref{eq:projected_flux_consistency_bound} gives
\begin{equation}
  \begin{split}
    \dgnorm{\ell}
           {\widetilde A_\ell}
           {\widetilde c_\ell}
           {u-u_\ell}
    \lesssim
    h_\ell^s
    \qb{
      \Norm{u}_{W^{1+s,r}(\W)}
      +
      \Norm{
        A\nabla u
      }_{W^{s,r}(\mathcal T_\ell)}
    }
    +
    \mathfrak E_\ell(u).
  \end{split}
  \label{eq:strang_with_h}
\end{equation}

\subsection{Coefficient representation defect}
\label{subsec:representation_defect}

The representation defect has a particularly simple form in the
lifting formulation.  Since $\jump{u}=0$,
\begin{equation}
  \begin{split}
    &
    \adg{\ell}{A}{c}{u}{w_\ell}
    -
    \adg{\ell}
         {\widetilde A_\ell}
         {\widetilde c_\ell}
         {u}{w_\ell}
    \\
    &\qquad
    =
    \int_\W
    \qp{
      A-\widetilde A_\ell
    }
    \nabla u
    \cdot
    \qb{
      \nabla_\ell w_\ell
      +
      \mathcal R_\ell\qp{\jump{w_\ell}}
    }
    \d x
    \\
    &\qquad\qquad
    +
    \int_\W
    \qp{
      c-\widetilde c_\ell
    }
    u w_\ell
    \d x.
  \end{split}
  \label{eq:defect_lifting_identity}
\end{equation}
Thus
\begin{equation}
  \begin{split}
    \mathfrak E_\ell(u)
    =
    \sup_{w_\ell\in V_\ell\setminus\qc{0}}
    \frac{1}{
      \dgnorm{\ell}
             {\widetilde A_\ell}
             {\widetilde c_\ell}
             {w_\ell}
    }
    \Bigg|    
    \int_\W
    \qp{
      A-\widetilde A_\ell
    }
    \nabla u
    \cdot
    \qb{
      \nabla_\ell w_\ell
      +
      \mathcal R_\ell\qp{\jump{w_\ell}}
    }
    \d x
    +
    \int_\W
    \qp{
      c-\widetilde c_\ell
    }
    u w_\ell
    \d x
    \Bigg|.
  \end{split}
  \label{eq:representation_defect_lifted}
\end{equation}

\begin{lemma}[$\leb q$ representation estimate]
  \label{lem:representation_defect_lq}
  Let $q\in[2,\infty]$ and define $r\in[2,\infty]$ by
  \begin{equation}
    \frac1q
    +
    \frac1r
    +
    \frac12
    =
    1.
    \label{eq:q_r_relation}
  \end{equation}
  Assume
  \[
    u\in W^{1,r}(\W),
    \qquad
    A-\widetilde A_\ell
    \in
    \leb q\qp{\W;\mathbb R^{d\times d}},
    \qquad
    c-\widetilde c_\ell
    \in
    \leb q(\W).
  \]
  Then
  \begin{equation}
    \begin{split}
      \mathfrak E_\ell(u)
      \lesssim
      &
      \Norm{
        A-\widetilde A_\ell
      }_{\leb q(\W)}
      \Norm{\nabla u}_{\leb r(\W)}    
      +
      \Norm{
        c-\widetilde c_\ell
      }_{\leb q(\W)}
      \Norm{u}_{\leb r(\W)}.
    \end{split}
    \label{eq:representation_defect_lq_bound}
  \end{equation}
\end{lemma}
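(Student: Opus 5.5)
We start from the lifted identity \eqref{eq:representation_defect_lifted}. For fixed $w_\ell\in V_\ell\setminus\qc{0}$ we bound the two integrals separately by the generalised H\"older inequality with exponents $(q,r,2)$, which is admissible by \eqref{eq:q_r_relation}. For the diffusion term this gives
\[
  \left|
    \int_\W
    \qp{A-\widetilde A_\ell}\nabla u\cdot
    \qb{\nabla_\ell w_\ell+\mathcal R_\ell\qp{\jump{w_\ell}}}
    \d x
  \right|
  \le
  \Norm{A-\widetilde A_\ell}_{\leb q(\W)}
  \Norm{\nabla u}_{\leb r(\W)}
  \Norm{\nabla_\ell w_\ell+\mathcal R_\ell\qp{\jump{w_\ell}}}_{\leb2(\W)}.
\]
Here the pointwise operator norm of the matrix is used inside the $\leb q$ norm. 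For the reaction term we get similarly
\[
  \Norm{c-\widetilde c_\ell}_{\leb q(\W)}\Norm{u}_{\leb r(\W)}\Norm{w_\ell}_{\leb2(\W)}.
\]
The endpoint cases $q=\infty,r=2$ and $q=2,r=\infty$ are covered by the same H\"older inequality.

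It then remains to control the $\leb2$ factors of $w_\ell$ by $\dgnorm{\ell}{\widetilde A_\ell}{\widetilde c_\ell}{w_\ell}$, uniformly in $\ell$ and in the face configuration.

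\emph{Broken gradient.} By \eqref{eq:dg_discrete_coeff_bounds},
\[
  \Norm{\nabla_\ell w_\ell}_{\leb2}\le\underline a^{-1/2}\Norm{\widetilde A_\ell^{1/2}\nabla_\ell w_\ell}_{\leb2}.
\]

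\emph{Lifting.} By Lemma~\ref{lem:lifting_stability} in the form \eqref{eq:lifting_jump_bound},
\[
  \Norm{\mathcal R_\ell\qp{\jump{w_\ell}}}_{\leb2}\lesssim\qp{\sum_e h_e^{-1}\Norm{\jump{w_\ell}}_{\leb2(e)}^2}^{1/2}.
\]
Since $\alpha_e(\widetilde A_\ell)\ge\underline a$ and $p\ge1$, we have $h_e^{-1}\le(\gamma\underline a)^{-1}\pen{e}{\widetilde A_\ell}$. Hence this sum is bounded by $(\gamma\underline a)^{-1}$ times the penalty part of the DG norm.

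\emph{Reaction factor.} Finally, $\Norm{w_\ell}_{\leb2}\le\underline c^{-1/2}\Norm{\widetilde c_\ell^{1/2}w_\ell}_{\leb2}$.

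Collecting the three bounds, dividing by the DG norm and taking the supremum over $w_\ell$ gives \eqref{eq:representation_defect_lq_bound}. The hidden constant depends only on $\underline a,\underline c,\gamma_0$ and the lifting constant.

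The only step that needs care is the lifting bound. It must be uniform with respect to small faces, which is exactly what Lemma~\ref{lem:lifting_stability} provides through the choice of $h_e$ in \eqref{eq:face_scale}. Its combination with the lower bound on the penalty weight must also not reintroduce any $p$- or face-dependence. Beyond this, the argument is a direct H\"older estimate. Because the lifting formulation contains no face integrals involving $A-\widetilde A_\ell$, no traces of the coefficient error appear. This is why an $\leb q$ norm of the coefficient error suffices.
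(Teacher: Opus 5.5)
Your proposal is correct and follows essentially the same route as the paper's proof. You apply H\"older's inequality with exponents $(q,r,2)$ to the lifted identity \eqref{eq:defect_lifting_identity}, then bound the $L^2$ norms of $\nabla_\ell w_\ell$ plus its lifted jump, and of $w_\ell$, by the DG norm via \eqref{eq:lifting_jump_bound} and the uniform coefficient bounds; the only difference is that you spell out the penalty lower bound (using $\alpha_e(\widetilde A_\ell)\ge\underline a$ and $p\ge1$), which the paper compresses into a single line.
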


\begin{proof}
  From \eqref{eq:defect_lifting_identity} and H\"older's inequality,
  \begin{align*}
    &
    \left|
      \adg{\ell}{A}{c}{u}{w_\ell}
      -
      \adg{\ell}
           {\widetilde A_\ell}
           {\widetilde c_\ell}
           {u}{w_\ell}
    \right|
    \\
    &\qquad
    \le
    \Norm{
      A-\widetilde A_\ell
    }_{\leb q(\W)}
    \Norm{\nabla u}_{\leb r(\W)}
    \Norm{
      \nabla_\ell w_\ell
      +
      \mathcal R_\ell\qp{\jump{w_\ell}}
    }_{\leb2(\W)}
    \\
    &\qquad\qquad
    +
    \Norm{
      c-\widetilde c_\ell
    }_{\leb q(\W)}
    \Norm{u}_{\leb r(\W)}
    \Norm{w_\ell}_{\leb2(\W)}.
  \end{align*}
  By \eqref{eq:lifting_jump_bound},
  \eqref{eq:dg_discrete_coeff_bounds} and
  \eqref{eq:dg_norm_def},
  \begin{equation}
    \Norm{
      \nabla_\ell w_\ell
      +
      \mathcal R_\ell\qp{\jump{w_\ell}}
    }_{\leb2(\W)}
    +
    \Norm{w_\ell}_{\leb2(\W)}
    \lesssim
    \dgnorm{\ell}
           {\widetilde A_\ell}
           {\widetilde c_\ell}
           {w_\ell}.
  \end{equation}
  Division by the DG norm and taking the supremum gives
  \eqref{eq:representation_defect_lq_bound}.
\end{proof}

We may therefore impose the $W^{1+s,r}$ regularity required by the
spatial approximation directly.  If
\[
  u\in W^{1+s,r}(\W),
\]
then
\begin{equation}
  \mathfrak E_\ell(u)
  \lesssim
  \qb{
    \Norm{
      A-\widetilde A_\ell
    }_{\leb q(\W)}
    +
    \Norm{
      c-\widetilde c_\ell
    }_{\leb q(\W)}
  }
  \Norm{u}_{W^{1+s,r}(\W)}.
  \label{eq:representation_defect_h1s}
\end{equation}

Combining
\eqref{eq:strang_with_h} and
\eqref{eq:representation_defect_h1s} yields
\begin{equation}
  \begin{split}
    \dgnorm{\ell}
           {\widetilde A_\ell}
           {\widetilde c_\ell}
           {u-u_\ell}
    &\lesssim
    h_\ell^s
    \qb{
      \Norm{u}_{W^{1+s,r}(\W)}
      +
      \Norm{
        A\nabla u
      }_{W^{s,r}(\mathcal T_\ell)}
    }
    +
    \qb{
      \Norm{
        A-\widetilde A_\ell
      }_{\leb q(\W)}
      +
      \Norm{
        c-\widetilde c_\ell
      }_{\leb q(\W)}
    }
    \Norm{u}_{W^{1+s,r}(\W)}.
  \end{split}
  \label{eq:strang_lq_final}
\end{equation}

It is convenient to introduce the pathwise regularity quantity
\begin{equation}
  \mathcal M_{s,r}(u;A)
  :=
  \Norm{u}_{W^{1+s,r}(\W)}
  +
  \Norm{
    A\nabla u
  }_{W^{s,r}(\mathcal T_\ell)}.
  \label{eq:forward_regularity_quantity}
\end{equation}
Then \eqref{eq:strang_lq_final} may be written compactly as
\begin{equation}
  \begin{split}
    \dgnorm{\ell}
           {\widetilde A_\ell}
           {\widetilde c_\ell}
           {u-u_\ell}
    \lesssim
    \qb{
      h_\ell^s
      +
      \Norm{
        A-\widetilde A_\ell
      }_{\leb q(\W)}
      +
      \Norm{
        c-\widetilde c_\ell
      }_{\leb q(\W)}
    }
    \mathcal M_{s,r}(u;A).
  \end{split}
  \label{eq:strang_lq_compact}
\end{equation}

The first contribution in \eqref{eq:strang_lq_compact} contains both
the best-approximation error of the solution and the controlled
consistency error introduced by the projected numerical flux.  The
remaining terms isolate the error caused by representing the random
coefficients on level $\ell$.  This is the decomposition used in the
multilevel analysis below.

\section{Sampling strategies for the random coefficients}
\label{sec:sampling}

This section describes two constructions of the represented
coefficient pair $\qp{\widetilde A_\ell,\widetilde c_\ell}$ appearing
in the forward DG problem.  Both constructions begin with a Gaussian
latent field and then apply deterministic coefficient maps.  They
differ in the object used to couple adjacent MLMC levels.

In the spectral construction, a single Gaussian field is represented
on a Cartesian reference grid and the same realised field is evaluated
on every polytopic forward mesh.  In the SPDE construction, the latent
field is approximated on an agglomerated polytopic hierarchy and
adjacent levels are coupled through consistent restrictions of the
same spatial white noise.

These are alternative multilevel couplings and are not coupled
pathwise to one another.  When they are compared numerically, their
correlation length, bulk variance and coefficient maps are chosen
consistently.  The periodic FFT model and the bounded-domain SPDE with
homogeneous Dirichlet conditions nevertheless define different
Gaussian measures near the boundary.

Throughout this section let
\begin{equation}
  \Phi_A:\W\times\mathbb R\longrightarrow\mathbb R^{d\times d}_{\mathrm{sym}},
  \qquad
  \Phi_c:\W\times\mathbb R\longrightarrow\mathbb R
  \label{eq:coefficient_maps}
\end{equation}
be measurable coefficient maps such that
\begin{equation}
  A(x,\xi)=\Phi_A\qp{x,Y(x,\xi)},
  \qquad
  c(x,\xi)=\Phi_c\qp{x,Y(x,\xi)}.
  \label{eq:coefficient_maps_latent}
\end{equation}
The truncated lognormal models of
Section~\ref{subsec:random_inputs} are particular examples.  We assume
that, uniformly in $x\in\W$, the maps satisfy
\begin{equation}
  \Norm{\Phi_A\qp{x,y_1}-\Phi_A\qp{x,y_2}}\le L_A\norm{y_1-y_2},
  \qquad
  \norm{\Phi_c\qp{x,y_1}-\Phi_c\qp{x,y_2}}\le L_c\norm{y_1-y_2}.
  \label{eq:coeff_map_lipschitz}
\end{equation}
on the range relevant to the truncated latent field.  The truncations
are chosen so that the uniform bounds
\eqref{eq:dg_discrete_coeff_bounds} hold for every represented
coefficient field considered below.

\subsection{Spectral sampling on a Cartesian reference grid}
\label{subsec:fft_sampling}

Let $\mathcal B:=\qb{0,L_{\mathcal B}}^d$ be a Cartesian box
containing $\overline\W$, interpreted periodically.  For
$\vec n\in\mathbb Z^d$, set
\begin{equation}
  \vec k_{\vec n}:=\frac{2\pi}{L_{\mathcal B}}\vec n,
  \label{eq:fft_wavenumbers}
\end{equation}
and let
$\phi_{\vec n}(x):=L_{\mathcal B}^{-d/2}
\exp\qp{\mathrm i\vec k_{\vec n}\cdot x}$
denote the corresponding orthonormal Fourier basis of
$\leb2(\mathcal B)$.

We consider the real-valued, mean-zero periodic Gaussian field
\begin{equation}
  Y(x,\xi):=\sum_{\vec n\in\mathbb Z^d}\sqrt{S_{\vec n}}\,
  \zeta_{\vec n}(\xi)\phi_{\vec n}(x),
  \label{eq:fft_continuum_field}
\end{equation}
where
\begin{equation}
  S_{\vec n}:=
  c_{\nu,d,\mathcal B}\tau^2
  \qp{\kappa^2+\norm{\vec k_{\vec n}}^2}^{-(\nu+d/2)}.
  \label{eq:matern_spectrum}
\end{equation}
Here $\kappa>0$ determines the correlation length, $\nu>0$ controls
the regularity, $\tau>0$ determines the variance and
$c_{\nu,d,\mathcal B}>0$ is the normalisation constant for the
periodic model.  The Gaussian coefficients satisfy
$\mathbb E[\zeta_{\vec n}]=0$,
$\mathbb E\qb{\zeta_{\vec n}\overline{\zeta_{\vec m}}}
=\delta_{\vec n\vec m}$ and
$\zeta_{-\vec n}=\overline{\zeta_{\vec n}}$, with the zero mode real.
Since $\nu>0$, the series \eqref{eq:fft_continuum_field} converges in
$\leb2\qp{\Xi;\leb2(\mathcal B)}$.  Its restriction to $\W$ defines
the target latent field for the FFT construction.

Fix a reference resolution $N_{\mathrm{ref}}\in\mathbb N$ and set
\begin{equation}
  \delta_{\mathrm{ref}}:=\frac{L_{\mathcal B}}{N_{\mathrm{ref}}}.
  \label{eq:fft_reference_spacing}
\end{equation}
The Cartesian grid points are
\begin{equation}
  x_{\vec i}:=\delta_{\mathrm{ref}}\vec i,
  \qquad
  \vec i\in\qc{0,\ldots,N_{\mathrm{ref}}-1}^d.
  \label{eq:reference_grid}
\end{equation}
Let $\Lambda_{\mathrm{ref}}\subset\mathbb Z^d$ denote the Fourier
modes represented by the corresponding $N_{\mathrm{ref}}^d$-point
discrete Fourier transform, with the standard Nyquist convention, and
define
\begin{equation}
  P_{\mathrm{ref}}Y:=
  \sum_{\vec n\in\Lambda_{\mathrm{ref}}}
  \sqrt{S_{\vec n}}\,\zeta_{\vec n}\phi_{\vec n}.
  \label{eq:fft_fourier_truncation}
\end{equation}

The reference-grid sample is the restriction of this same truncated
realisation to the Cartesian grid,
\begin{equation}
  \qp{Y_{\mathrm{ref}}}_{\vec i}:=
  P_{\mathrm{ref}}Y\qp{x_{\vec i}}.
  \label{eq:fft_reference_grid_values}
\end{equation}
Equivalently, under the chosen discrete Fourier normalisation,
\begin{equation}
  Y_{\mathrm{ref}}
  =\mathcal F^{-1}\qp{\sqrt{S_{\vec n}}\,
  \widehat W_{\mathrm{ref},\vec n}},
  \qquad
  \widehat W_{\mathrm{ref}}=\mathcal F W_{\mathrm{ref}},
  \label{eq:fft_sampling_discrete}
\end{equation}
where $W_{\mathrm{ref}}$ is a real centred Gaussian white-noise vector
and its Fourier coefficients are identified with the resolved
coefficients in \eqref{eq:fft_fourier_truncation}.  Thus
\eqref{eq:fft_sampling_discrete} is the computational realisation of
the Fourier truncation of the same field $Y$.  The zero Fourier mode
is retained.

Let $\mathcal I_{\mathrm{ref}}$ denote the Cartesian interpolation
operator applied to the grid values
\eqref{eq:fft_reference_grid_values}; in the numerical experiments
this is bilinear interpolation in two dimensions.  Define
\begin{equation}
  \widetilde Y_{\mathrm{ref}}
  :=\mathcal I_{\mathrm{ref}}\qp{P_{\mathrm{ref}}Y}
  \qquad\text{on }\W.
  \label{eq:fft_interpolation}
\end{equation}

For a fixed MLMC hierarchy, the same realised function
$\widetilde Y_{\mathrm{ref}}(\cdot,\xi)$ drives every forward level:
\begin{equation}
  \widetilde Y_\ell:=\widetilde Y_{\mathrm{ref}},
  \qquad
  \ell=0,\ldots,L.
  \label{eq:fft_common_latent_field}
\end{equation}
The represented coefficients are
\begin{equation}
  \widetilde A_\ell(x,\xi)
  :=\Phi_A\qp{x,\widetilde Y_{\mathrm{ref}}(x,\xi)},
  \qquad
  \widetilde c_\ell(x,\xi)
  :=\Phi_c\qp{x,\widetilde Y_{\mathrm{ref}}(x,\xi)}.
  \label{eq:fft_represented_coefficients}
\end{equation}
Hence both forward solves in the coupled correction
$\delta Q_\ell:=Q_\ell-Q_{\ell-1}$ use the same represented
coefficient realisation.  No coefficient field is transferred between
the forward meshes.

By construction,
\begin{equation}
  Y-\widetilde Y_{\mathrm{ref}}
  =
  \qp{Y-P_{\mathrm{ref}}Y}
  +
  \qp{P_{\mathrm{ref}}Y
  -\mathcal I_{\mathrm{ref}}\qp{P_{\mathrm{ref}}Y}}
  \qquad\text{on }\W.
  \label{eq:fft_latent_error_split}
\end{equation}

\begin{lemma}[Propagation of latent-field error]
  \label{lem:fft_coeff_lipschitz}
  Under \eqref{eq:coeff_map_lipschitz}, for every
  $q\in[1,\infty]$,
  \begin{equation}
    \Norm{A-\widetilde A_\ell}_{\leb q(\W)}
    +
    \Norm{c-\widetilde c_\ell}_{\leb q(\W)}
    \lesssim
    \Norm{Y-\widetilde Y_{\mathrm{ref}}}_{\leb q(\W)}
    \label{eq:fft_coeff_lipschitz_bound}
  \end{equation}
  almost surely, with a constant independent of $\ell$ and
  $N_{\mathrm{ref}}$.
\end{lemma}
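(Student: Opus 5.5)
The plan is a pointwise Lipschitz estimate followed by integration; nothing beyond \eqref{eq:coeff_map_lipschitz} and the definitions \eqref{eq:coefficient_maps_latent} and \eqref{eq:fft_represented_coefficients} is needed.

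Fix a realisation $\xi$ outside a null set on which the latent fields $Y(\cdot,\xi)$ and $\widetilde Y_{\mathrm{ref}}(\cdot,\xi)$ are measurable functions on $\W$. For almost every $x\in\W$ we have
\[
  A(x,\xi)-\widetilde A_\ell(x,\xi)
  =
  \Phi_A\qp{x,Y(x,\xi)}-\Phi_A\qp{x,\widetilde Y_{\mathrm{ref}}(x,\xi)},
\]
and the analogous identity holds for $c-\widetilde c_\ell$. The Lipschitz bounds \eqref{eq:coeff_map_lipschitz} then give the pointwise estimates
\[
  \Norm{A(x,\xi)-\widetilde A_\ell(x,\xi)}\le L_A\norm{Y(x,\xi)-\widetilde Y_{\mathrm{ref}}(x,\xi)},
  \qquad
  \norm{c(x,\xi)-\widetilde c_\ell(x,\xi)}\le L_c\norm{Y(x,\xi)-\widetilde Y_{\mathrm{ref}}(x,\xi)}.
\]
Here the matrix $\leb q$ norm is understood as the $\leb q$ norm of the pointwise matrix norm used in \eqref{eq:coeff_map_lipschitz}. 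Any other choice of matrix norm changes the result only by a dimension-dependent constant.

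Taking the $\leb q(\W)$ norm of both pointwise inequalities, which is legitimate for $q<\infty$ by monotonicity of the integral and for $q=\infty$ by taking essential suprema, yields
\[
  \Norm{A-\widetilde A_\ell}_{\leb q(\W)}\le L_A\Norm{Y-\widetilde Y_{\mathrm{ref}}}_{\leb q(\W)},
  \qquad
  \Norm{c-\widetilde c_\ell}_{\leb q(\W)}\le L_c\Norm{Y-\widetilde Y_{\mathrm{ref}}}_{\leb q(\W)}.
\]
Adding the two bounds proves \eqref{eq:fft_coeff_lipschitz_bound} with constant $L_A+L_c$. This constant involves neither $\ell$ nor $N_{\mathrm{ref}}$. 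It is independent of $\ell$ because, by \eqref{eq:fft_common_latent_field}, every level uses the same latent field $\widetilde Y_{\mathrm{ref}}$. It is independent of $N_{\mathrm{ref}}$ because the Lipschitz constants belong to the coefficient maps alone.

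The only step that needs care is justifying the Lipschitz bound at both arguments $Y(x,\xi)$ and $\widetilde Y_{\mathrm{ref}}(x,\xi)$. The hypothesis \eqref{eq:coeff_map_lipschitz} holds only on ``the range relevant to the truncated latent field'', while the interpolated field $\widetilde Y_{\mathrm{ref}}$ could in principle leave that range. For the truncated lognormal maps this is not a real obstruction. The truncation $\Pi_{[-\beta,\beta]}$ is $1$-Lipschitz on all of $\mathbb R$, and $\exp$ is Lipschitz on $m+[-\beta,\beta]$ with constant $\exp\qp{\esssup_\W m+\beta}$. The composition $y\mapsto\exp\qp{m(x)+\Pi_{[-\beta,\beta]}y}$ is therefore globally Lipschitz in $y$, uniformly in $x$. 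I would record this observation so that the pointwise step applies to any pair of real arguments. In general I would read \eqref{eq:coeff_map_lipschitz} as holding for all $y_1,y_2$ in a set containing the values of both $Y$ and $\widetilde Y_{\mathrm{ref}}$.
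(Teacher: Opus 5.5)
Your proof is correct and follows the paper's own argument: the pointwise Lipschitz estimate from the coefficient maps, evaluated at $Y$ and $\widetilde Y_{\mathrm{ref}}$, followed by taking the $L^q(\Omega)$ norm, giving the constant $L_A+L_c$. Your extra observation is worth keeping. The paper only assumes the Lipschitz bound ``on the range relevant to the truncated latent field''. The truncated lognormal maps are globally Lipschitz in $y$, so the pointwise step applies whatever values $\widetilde Y_{\mathrm{ref}}$ takes.
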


\begin{proof}
  The pointwise estimate follows from
  \eqref{eq:coefficient_maps_latent},
  \eqref{eq:fft_represented_coefficients} and
  \eqref{eq:coeff_map_lipschitz}.  Taking the $\leb q(\W)$ norm gives
  the result.
\end{proof}

\begin{lemma}[Reference-grid approximation]
  \label{lem:fft_latent_lq}
  Let $q\in[1,\infty]$ and suppose that, for some
  $\mu\in(0,2]$ appropriate to the interpolation operator,
  $P_{\mathrm{ref}}Y\in W^{\mu,q}(\mathcal B)$ almost surely.  Then
  \begin{equation}
    \Norm{Y-\widetilde Y_{\mathrm{ref}}}_{\leb q(\W)}
    \lesssim
    \Norm{Y-P_{\mathrm{ref}}Y}_{\leb q(\mathcal B)}
    +
    \delta_{\mathrm{ref}}^\mu
    \Norm{P_{\mathrm{ref}}Y}_{W^{\mu,q}(\mathcal B)}.
    \label{eq:fft_latent_lq_bound}
  \end{equation}
\end{lemma}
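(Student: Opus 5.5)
The plan is to start from the splitting \eqref{eq:fft_latent_error_split} and apply the triangle inequality in $\leb q(\W)$:
\[
  \Norm{Y-\widetilde Y_{\mathrm{ref}}}_{\leb q(\W)}
  \le
  \Norm{Y-P_{\mathrm{ref}}Y}_{\leb q(\W)}
  +
  \Norm{P_{\mathrm{ref}}Y-\mathcal I_{\mathrm{ref}}\qp{P_{\mathrm{ref}}Y}}_{\leb q(\W)}.
\]
Since $\W\subset\mathcal B$, the first term is bounded by $\Norm{Y-P_{\mathrm{ref}}Y}_{\leb q(\mathcal B)}$. This is the truncation contribution, and the statement leaves it unestimated.

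For the second term we view $\mathcal B$ as the union of the $N_{\mathrm{ref}}^d$ Cartesian cells $Q_{\vec i}$ of side $\delta_{\mathrm{ref}}$, with periodic wrap-around. We then use the standard local interpolation estimate for the tensor-product ($Q_1$, bilinear in two dimensions) interpolant. On the reference cell $\hat Q=[0,1]^d$, the interpolant $\hat{\mathcal I}$ reproduces $\mathbb Q_1\supset\mathbb P_1$. It is bounded from $W^{\mu,q}(\hat Q)$ to $\leb q(\hat Q)$ whenever point evaluation is continuous, that is, when $\mu q>d$ or $q=\infty$ with $\mu>0$ and continuity. A Bramble--Hilbert or Deny--Lions argument, with the fractional version for non-integer $\mu$, then gives
\[
  \Norm{v-\hat{\mathcal I}v}_{\leb q(\hat Q)}\lesssim\norm{v}_{W^{\mu,q}(\hat Q)},
  \qquad 0<\mu\le2 .
\]
Scaling the affine map $x=x_{\vec i}+\delta_{\mathrm{ref}}\hat x$ yields
\[
  \Norm{v-\mathcal I_{\mathrm{ref}}v}_{\leb q(Q_{\vec i})}\lesssim\delta_{\mathrm{ref}}^{\mu}\norm{v}_{W^{\mu,q}(Q_{\vec i})}.
\]
Raising this to the power $q$ and summing over cells covering $\W$ gives $\delta_{\mathrm{ref}}^\mu\Norm{P_{\mathrm{ref}}Y}_{W^{\mu,q}(\mathcal B)}$. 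For $q=\infty$ we take the maximum over cells instead. For fractional $\mu$ the Slobodeckij seminorms are only superadditive over cells, which gives exactly the inequality direction we need.

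The hypothesis that $P_{\mathrm{ref}}Y\in W^{\mu,q}(\mathcal B)$ is mild, since $P_{\mathrm{ref}}Y$ is a trigonometric polynomial and hence smooth almost surely. The phrase ``$\mu$ appropriate to the interpolation operator'' is what allows us to take $\mu\le 2$ (the polynomial reproduction order of $\mathbb Q_1$) together with $\mu q>d$ for the nodal values to make sense.

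The main obstacle is stating the constant cleanly: it must be uniform in $N_{\mathrm{ref}}$ and $\vec i$, which holds because all cells are translates of one dilated cube. The fractional-order Bramble--Hilbert step also needs care. For the purposes of this lemma we would cite it as standard rather than reprove it.
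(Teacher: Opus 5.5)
Your proposal is correct and follows the paper's proof. That proof is exactly the triangle inequality applied to \eqref{eq:fft_latent_error_split}, using $\W\subset\mathcal B$ for the truncation term, followed by the standard Cartesian interpolation estimate on the second term. You supply the details the paper leaves implicit: the Bramble--Hilbert and scaling argument on each cell, the superadditivity of the (fractional) seminorms when summing over cells, and the condition $\mu q>d$ needed for nodal interpolation to be well defined, which is a reasonable reading of ``$\mu$ appropriate to the interpolation operator''.
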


\begin{proof}
  Apply the triangle inequality to
  \eqref{eq:fft_latent_error_split} and the Cartesian interpolation
  estimate to the second term.
\end{proof}

Combining Lemmas~\ref{lem:fft_coeff_lipschitz} and
\ref{lem:fft_latent_lq} gives
\begin{equation}
  \Norm{A-\widetilde A_\ell}_{\leb q(\W)}
  +
  \Norm{c-\widetilde c_\ell}_{\leb q(\W)}
  \lesssim
  \Norm{Y-P_{\mathrm{ref}}Y}_{\leb q(\mathcal B)}
  +
  \delta_{\mathrm{ref}}^\mu
  \Norm{P_{\mathrm{ref}}Y}_{W^{\mu,q}(\mathcal B)}.
  \label{eq:fft_coeff_lq_bound}
\end{equation}

For finite $\varrho\ge1$, define
\begin{equation}
  \varepsilon_{\mathrm{ref}}
  :=
  \Norm{
    \Norm{Y-P_{\mathrm{ref}}Y}_{\leb q(\mathcal B)}
    +
    \delta_{\mathrm{ref}}^\mu
    \Norm{P_{\mathrm{ref}}Y}_{W^{\mu,q}(\mathcal B)}
  }_{\leb{2\varrho}(\Xi)}.
  \label{eq:fft_reference_error}
\end{equation}
Then
\begin{equation}
  \Norm{
    \Norm{A-\widetilde A_\ell}_{\leb q(\W)}
    +
    \Norm{c-\widetilde c_\ell}_{\leb q(\W)}
  }_{\leb{2\varrho}(\Xi)}
  \lesssim
  \varepsilon_{\mathrm{ref}},
  \qquad
  \ell=0,\ldots,L.
  \label{eq:fft_reference_error_moment}
\end{equation}

For a target hierarchy with finest scale $h_L$, the resolution
$N_{\mathrm{ref}}=N_{\mathrm{ref}}(L)$ is selected at the accuracy
required for that hierarchy.  The resulting reference realisation is
then shared by every forward level $0,\ldots,L$.

\subsection{SPDE sampling on an agglomerated hierarchy}
\label{subsec:spde_sampling}

We next construct the latent field through the elliptic SPDE
\begin{equation}
  \qp{\kappa^2-\Delta}Y=\tau\mathcal W
  \quad\text{in }\W,
  \qquad
  Y=0
  \quad\text{on }\partial\W.
  \label{eq:spde_matern}
\end{equation}
Let
$\mathcal L_{\mathrm D}:=\kappa^2-\Delta_{\mathrm D}$ and
$\mathcal G:=\mathcal L_{\mathrm D}^{-1}$.  Then
\begin{equation}
  Y=\tau\mathcal G\mathcal W.
  \label{eq:continuum_spde_operator}
\end{equation}

Spatial white noise is understood as an isonormal Gaussian process
over $\leb2(\W)$:
\begin{equation}
  \mathbb E\qb{
    \ltwop{\mathcal W}{v}
    \ltwop{\mathcal W}{w}
  }
  =
  \qp{v,w}_{\leb2(\W)}
  \qquad
  \forall v,w\in\leb2(\W).
  \label{eq:white_noise_covariance}
\end{equation}

In the whole-space or periodic setting,
\eqref{eq:spde_matern} has spectral density proportional to
$\qp{\kappa^2+\norm{\vec k}^2}^{-2}$.  It therefore agrees with the
Mat\'ern spectrum \eqref{eq:matern_spectrum} when
\begin{equation}
  \nu+\frac d2=2.
  \label{eq:second_order_matern_match}
\end{equation}
For other Mat\'ern smoothness parameters a fractional or higher-order
SPDE representation may be used.

The homogeneous Dirichlet condition modifies the covariance near
$\partial\W$ and, in particular, pins the latent field to zero on the
boundary.  It is part of the bounded-domain SPDE model and is not
intended to reproduce the periodic FFT covariance close to the
boundary.

Let $\qc{\mathcal T_\ell}_{\ell=0}^L$ be an agglomerated hierarchy
such that every coarse cell $K\in\mathcal T_{\ell-1}$ is a union of
cells of $\mathcal T_\ell$:
\begin{equation}
  K=\bigcup_{k\in\mathcal C_\ell(K)}k.
  \label{eq:agglomeration_relation}
\end{equation}
For fixed polynomial degree this gives the natural inclusion
\begin{equation}
  V_{\ell-1}\subset V_\ell,
  \label{eq:agglomerated_space_inclusion}
\end{equation}
since a polynomial on a coarse agglomerate restricts to a polynomial
of the same degree on every fine child.

The latent SPDE is discretised by the same lifting-based symmetric
interior-penalty construction as the forward problem, with diffusion
coefficient $I$ and reaction coefficient $\kappa^2$.  Thus
\begin{equation}
  a_{\ell,\mathrm D}^{\mathrm{lat}}(w,v)
  :=
  a_\ell^{\mathrm{DG}}\qp{w,v;I,\kappa^2}.
  \label{eq:latent_dg_form}
\end{equation}
In particular, the lifting and penalty terms include the boundary
faces and impose the homogeneous Dirichlet condition weakly.

Define the discrete latent resolvent
$\mathcal G_\ell:V_\ell'\longrightarrow V_\ell$ by
\begin{equation}
  a_{\ell,\mathrm D}^{\mathrm{lat}}
  \qp{\mathcal G_\ell L_\ell,v_\ell}
  =
  L_\ell(v_\ell)
  \qquad
  \forall v_\ell\in V_\ell.
  \label{eq:spde_discrete_resolvent}
\end{equation}

\subsection{Discrete white noise and inter-level coupling}
\label{subsubsec:spde_noise_coupling}

The canonical level-$\ell$ white-noise functional is the restriction
of the same continuum white noise to $V_\ell$:
\begin{equation}
  \eta_\ell(v_\ell):=\ltwop{\mathcal W}{v_\ell},
  \qquad
  v_\ell\in V_\ell.
  \label{eq:noise_restriction}
\end{equation}
It satisfies
\begin{equation}
  \mathbb E\qb{\eta_\ell(v_\ell)\eta_\ell(w_\ell)}
  =
  \qp{v_\ell,w_\ell}_{\leb2(\W)}.
  \label{eq:noise_cov_identity}
\end{equation}

Let $\qc{\varphi_i^{(\ell)}}_{i=1}^{n_\ell}$ be a basis of $V_\ell$
and define the mass matrix
\begin{equation}
  \qp{M_\ell}_{ij}:=
  \int_\W\varphi_i^{(\ell)}\varphi_j^{(\ell)}\d x.
  \label{eq:mass_matrix_level}
\end{equation}
The white-noise load vector
\begin{equation}
  \qp{b_\ell}_i:=\eta_\ell\qp{\varphi_i^{(\ell)}}
  \label{eq:white_noise_load_vector}
\end{equation}
therefore satisfies
\begin{equation}
  b_\ell\sim\mathcal N\qp{0,M_\ell}.
  \label{eq:discrete_white_noise}
\end{equation}
If $v_\ell=\sum_i\mathbf v_i\varphi_i^{(\ell)}$, then
\begin{equation}
  \eta_\ell(v_\ell)=b_\ell^\top\mathbf v.
  \label{eq:white_noise_functional_vector}
\end{equation}
Thus $b_\ell$ is the Gaussian load vector representing the action of
white noise on $V_\ell$; it is not the coefficient vector of an
$\leb2$ function.

Since the DG mass matrix is block diagonal by elements,
\eqref{eq:discrete_white_noise} may be sampled locally.  If
$M_K=L_KL_K^\top$, then
\begin{equation}
  b_K=L_Kz_K,
  \qquad
  z_K\sim\mathcal N\qp{0,I},
  \label{eq:local_white_noise_sampling}
\end{equation}
and the elemental load vectors are mutually independent.

Let
$\mathbf P_{\ell-1}^{\ell}:
\mathbb R^{n_{\ell-1}}\longrightarrow\mathbb R^{n_\ell}$
denote the coefficient prolongation associated with
\eqref{eq:agglomerated_space_inclusion}.  Thus if
$v_{\ell-1}\in V_{\ell-1}$ has coefficient vector $\mathbf v$, then
$\mathbf P_{\ell-1}^{\ell}\mathbf v$ represents the same function as
an element of $V_\ell$.  Consequently,
\begin{equation}
  M_{\ell-1}
  =
  \qp{\mathbf P_{\ell-1}^{\ell}}^\top
  M_\ell
  \mathbf P_{\ell-1}^{\ell}.
  \label{eq:mass_matrix_agglomeration_identity}
\end{equation}

Given a fine-level white-noise load $b_\ell$, define
\begin{equation}
  b_{\ell-1}:=
  \qp{\mathbf P_{\ell-1}^{\ell}}^\top b_\ell.
  \label{eq:noise_coupling}
\end{equation}
Then
\begin{equation}
  \operatorname{Cov}\qp{b_{\ell-1}}
  =
  \qp{\mathbf P_{\ell-1}^{\ell}}^\top
  M_\ell
  \mathbf P_{\ell-1}^{\ell}
  =
  M_{\ell-1},
  \label{eq:noise_coupling_covariance}
\end{equation}
so the coarse marginal has exactly the required white-noise law.
Moreover,
\begin{equation}
  \eta_{\ell-1}(v_{\ell-1})
  =
  \eta_\ell(v_{\ell-1})
  \qquad
  \forall v_{\ell-1}\in V_{\ell-1},
  \label{eq:noise_consistency_identity}
\end{equation}
where on the right-hand side $v_{\ell-1}$ is identified with its
prolongation to $V_\ell$.  Thus adjacent levels are restrictions of
one and the same isonormal Gaussian process.

The coupled latent fields are
\begin{equation}
  Y_\ell:=\tau\mathcal G_\ell\eta_\ell,
  \qquad
  Y_{\ell-1}:=\tau\mathcal G_{\ell-1}\eta_{\ell-1},
  \label{eq:coupled_spde_fields}
\end{equation}
and the represented forward coefficients are
\begin{equation}
  \widetilde A_\ell(x,\xi):=\Phi_A\qp{x,Y_\ell(x,\xi)},
  \qquad
  \widetilde c_\ell(x,\xi):=\Phi_c\qp{x,Y_\ell(x,\xi)}.
  \label{eq:spde_represented_coefficients}
\end{equation}

\subsection{Approximation of the SPDE latent field}
\label{subsec:spde_latent_approximation}

The strong approximation of elliptic SPDEs driven by spatial white
noise by polytopic symmetric interior-penalty methods is analysed in
\cite{pryer2026polytopic}.  The analysis uses the canonical restriction
of the same continuum white-noise functional to the DG space and
separates stochastic projection of the forcing from deterministic SIP
approximation of the elliptic resolvent.

We assume that the polytopic hierarchy is quasi-uniform, independently
of the chunkiness assumption used in the DG analysis: there exists
$c_{\mathrm{qu}}>0$, independent of $\ell$, such that
\begin{equation}
  c_{\mathrm{qu}}h_\ell\le h_K\le h_\ell
  \qquad
  \forall K\in\mathcal T_\ell.
  \label{eq:spde_quasi_uniformity}
\end{equation}
We also assume the standard $H^2$ regularity estimate for the
homogeneous Dirichlet reaction--diffusion problem,
\begin{equation}
  \Norm{\mathcal L_{\mathrm D}^{-1}f}_{\sobh2(\W)}
  \lesssim
  \Norm{f}_{\leb2(\W)}
  \qquad
  \forall f\in\leb2(\W).
  \label{eq:dirichlet_h2_regular}
\end{equation}

Under these assumptions,
\cite[Theorem~3.15]{pryer2026polytopic} gives the sharp strong estimate
\begin{equation}
  \Norm{Y-Y_\ell}_{\leb2\qp{\Xi;\leb2(\W)}}
  \lesssim
  h_\ell^{\alpha_Y},
  \qquad
  \alpha_Y:=2-\frac d2,
  \qquad
  d\in\qc{2,3}.
  \label{eq:spde_strong_rate}
\end{equation}
Since $Y-Y_\ell$ is a centred $\leb2(\W)$-valued Gaussian random
variable, equivalence of finite Gaussian moments yields, for every
finite $\varrho\ge1$,
\begin{equation}
  \Norm{Y-Y_\ell}_{\leb{\varrho}\qp{\Xi;\leb2(\W)}}
  \lesssim
  h_\ell^{\alpha_Y}.
  \label{eq:spde_strong_rate_moments}
\end{equation}
Thus $\alpha_Y=1$ in $d=2$ and $\alpha_Y=\frac12$ in $d=3$.

\begin{lemma}[Propagation to the SPDE coefficients]
  \label{lem:spde_coeff_from_latent}
  Assume \eqref{eq:coeff_map_lipschitz} and the uniform deterministic
  bounds on the exact and represented coefficients.  Then, for every
  finite $\varrho\ge1$,
  \begin{equation}
    \Norm{A-\widetilde A_\ell}_{\leb{\varrho}\qp{\Xi;\leb2(\W)}}
    +
    \Norm{c-\widetilde c_\ell}_{\leb{\varrho}\qp{\Xi;\leb2(\W)}}
    \lesssim
    h_\ell^{2-d/2}.
    \label{eq:spde_coeff_l2_bound}
  \end{equation}
  Moreover, for every $q\in[2,\infty)$,
  \begin{equation}
    \Norm{A-\widetilde A_\ell}_{\leb{\varrho}\qp{\Xi;\leb q(\W)}}
    +
    \Norm{c-\widetilde c_\ell}_{\leb{\varrho}\qp{\Xi;\leb q(\W)}}
    \lesssim
    h_\ell^{\alpha_{\mathrm{spde}}},
    \qquad
    \alpha_{\mathrm{spde}}:=\frac{4-d}{q}.
    \label{eq:spde_coeff_lq_bound}
  \end{equation}
\end{lemma}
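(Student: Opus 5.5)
The plan is to reduce everything to the latent-field estimate \eqref{eq:spde_strong_rate_moments} through a pointwise Lipschitz bound, and then obtain the $\leb q$ rates by interpolating between $\leb2$ and $\leb\infty$.

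\textbf{The $\leb2$ bound.} By \eqref{eq:coefficient_maps_latent}, \eqref{eq:spde_represented_coefficients} and \eqref{eq:coeff_map_lipschitz}, for almost every $x$ and $\xi$,
\[
  \Norm{A(x,\xi)-\widetilde A_\ell(x,\xi)}+\norm{c(x,\xi)-\widetilde c_\ell(x,\xi)}
  \le (L_A+L_c)\norm{Y(x,\xi)-Y_\ell(x,\xi)} .
\]
This is the same argument as in Lemma~\ref{lem:fft_coeff_lipschitz}. The Lipschitz bound is only assumed on the range relevant to the truncated latent field. If the truncation enters through $\Pi_{[-\beta,\beta]}$, the bound still holds globally, because $\Pi_{[-\beta,\beta]}$ is $1$-Lipschitz on $\mathbb R$. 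Taking the $\leb2(\W)$ norm and then the $\leb\varrho(\Xi)$ norm, \eqref{eq:spde_strong_rate_moments} gives \eqref{eq:spde_coeff_l2_bound} immediately, since $\alpha_Y=2-d/2$.

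\textbf{Interpolation to $\leb q$.} Set $E:=\Norm{A-\widetilde A_\ell}+\norm{c-\widetilde c_\ell}$. By the uniform deterministic bounds \eqref{eq:coeff_bounds} and \eqref{eq:dg_discrete_coeff_bounds}, $\Norm{E}_{\leb\infty(\W)}\le C$ almost surely, with $C$ depending only on $\overline a,\underline a,\overline c,\underline c$. For $q\in[2,\infty)$ the elementary inequality
\[
  \Norm{E}_{\leb q}^q\le \Norm{E}_{\leb\infty}^{q-2}\Norm{E}_{\leb2}^2
\]
yields
\[
  \Norm{E}_{\leb q(\W)}\lesssim \Norm{E}_{\leb2(\W)}^{2/q}
\]
pathwise. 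Taking $\leb\varrho(\Xi)$ norms gives
\[
  \mathbb E\qb{\Norm{E}_{\leb q}^\varrho}
  \lesssim \mathbb E\qb{\Norm{E}_{\leb2}^{2\varrho/q}}
  \le \Norm{E}_{\leb{2\varrho/q}(\Xi;\leb2)}^{2\varrho/q}.
\]
The exponent $2\varrho/q$ may be smaller than one. In that case Jensen's inequality bounds it by the $\leb{\varrho'}$ moment for any $\varrho'\ge1$, so no difficulty arises. Applying the first part with moment index $\max\{1,2\varrho/q\}$ then gives
\[
  \Norm{E}_{\leb\varrho(\Xi;\leb q)}\lesssim \qp{h_\ell^{2-d/2}}^{2/q}=h_\ell^{(4-d)/q},
\]
which is exactly $\alpha_{\mathrm{spde}}$.

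\textbf{Main obstacle.} The only delicate point is justifying the pathwise $\leb\infty$ bound on $E$. The exact and represented coefficients must both obey the deterministic bounds, which is what the phrase ``uniform deterministic bounds'' in the hypothesis guarantees. No $\leb\infty$ control of $Y-Y_\ell$ itself is needed, and such control would in any case fail for the Gaussian fields without a logarithmic loss. This is why the argument proceeds through the coefficient bounds rather than through Gaussian tail estimates on $Y-Y_\ell$, and it explains the reduced rate $(4-d)/q$ compared with the $\leb2$ rate.
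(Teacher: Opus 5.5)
Your proposal is correct and follows the paper's proof essentially line by line: the pointwise Lipschitz transfer turns the strong latent-field rate into the $L^2$ bound. The pathwise interpolation $\|v\|_{L^q}\le\|v\|_{L^2}^{2/q}\|v\|_{L^\infty}^{1-2/q}$, with the $L^\infty$ factor controlled by the deterministic coefficient bounds, then gives the rate $\frac{2}{q}(2-\frac{d}{2})=\frac{4-d}{q}$ after taking moments; your explicit handling of the moment index $2\varrho/q$, which may be below one, via monotonicity of $L^p(\Xi)$ norms fills in a step the paper leaves implicit.
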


\begin{proof}
  The $\leb2(\W)$ estimate follows from
  \eqref{eq:coeff_map_lipschitz} and
  \eqref{eq:spde_strong_rate_moments}.  For $q\in[2,\infty)$,
  interpolation gives
  \begin{equation*}
    \Norm{v}_{\leb q(\W)}
    \le
    \Norm{v}_{\leb2(\W)}^{2/q}
    \Norm{v}_{\leb\infty(\W)}^{1-2/q}.
  \end{equation*}
  The coefficient errors are uniformly bounded in
  $\leb\infty(\W)$ by the deterministic coefficient bounds.  Applying
  this inequality pathwise, taking moments and using
  \eqref{eq:spde_strong_rate_moments} gives
  $\alpha_{\mathrm{spde}}
  =\frac{2}{q}\qp{2-\frac d2}
  =\frac{4-d}{q}$.
\end{proof}

\begin{remark}[An admissible parameter choice]
  \label{rem:spde_admissible_parameters}
  The SPDE coefficient estimate requires $q<\infty$.  In two
  dimensions, for example, one may take
  $q=4$, $r=4$ and $\frac14<s\le p$.  Then
  $\alpha_{\mathrm{spde}}=\frac12$.  The forward regularity condition
  introduced below remains a separate assumption on the random
  elliptic solution.
\end{remark}

\subsection{Transfer to the forward problem}
\label{subsec:combined_error}

We now combine the sampling estimates with the lifting-based Strang
bound of Section~\ref{subsec:representation_defect}.  Let
$q,r\in[2,\infty]$ satisfy
\begin{equation}
  \frac1q+\frac1r+\frac12=1,
  \label{eq:combined_qr_condition}
\end{equation}
and let
\begin{equation}
  \frac1r<s\le p,
  \label{eq:combined_s_condition}
\end{equation}
where $p$ is the polynomial degree of the forward DG space.

To make the level dependence explicit, define
\begin{equation}
  \mathcal M_{s,r,\ell}\qp{u;A}
  :=
  \Norm{u}_{W^{1+s,r}(\W)}
  +
  \Norm{A\nabla u}_{W^{s,r}(\mathcal T_\ell)}
  \label{eq:level_forward_regularity_quantity}
\end{equation}
and set
\begin{equation}
  \mathscr M_{s,r}
  :=
  \sup_{0\le\ell\le L}
  \mathcal M_{s,r,\ell}\qp{u;A}.
  \label{eq:combined_regularity_quantity}
\end{equation}
We assume
\begin{equation}
  \mathscr M_{s,r}\in\leb{2\varrho}(\Xi)
  \label{eq:combined_u_regularity}
\end{equation}
for the probability exponent $\varrho\in[1,\infty)$ under
consideration.  This is a regularity assumption on the random forward
problem and is independent of the latent-field approximation estimates
above.

Define the level-$\ell$ coefficient error
\begin{equation}
  D_\ell
  :=
  \Norm{A-\widetilde A_\ell}_{\leb q(\W)}
  +
  \Norm{c-\widetilde c_\ell}_{\leb q(\W)}.
  \label{eq:coefficient_error_level}
\end{equation}

\begin{theorem}[Transfer from coefficient approximation to forward error]
  \label{thm:combined_transfer}
  Suppose that
  \begin{equation}
    \Norm{D_\ell}_{\leb{2\varrho}(\Xi)}
    \lesssim
    \varepsilon_\ell.
    \label{eq:abstract_coeff_rate}
  \end{equation}
  Then
  \begin{equation}
    \Norm{
      \dgnorm{\ell}{\widetilde A_\ell}{\widetilde c_\ell}{u-u_\ell}
    }_{\leb{\varrho}(\Xi)}
    \lesssim
    \qp{h_\ell^s+\varepsilon_\ell}
    \Norm{\mathscr M_{s,r}}_{\leb{2\varrho}(\Xi)}.
    \label{eq:combined_transfer_bound}
  \end{equation}
\end{theorem}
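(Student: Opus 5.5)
The proof is pathwise first, then in moments. Fix $\xi$ outside a null set. Then the exact pair $(A(\xi),c(\xi))$ satisfies \eqref{eq:continuum_coeff_elliptic}, and the represented pair $(\widetilde A_\ell(\xi),\widetilde c_\ell(\xi))$ satisfies \eqref{eq:dg_discrete_coeff_bounds}. We may therefore apply the compact Strang bound \eqref{eq:strang_lq_compact} realisation by realisation, with $q,r,s$ fixed by \eqref{eq:combined_qr_condition}--\eqref{eq:combined_s_condition}. Using the level-dependent quantity \eqref{eq:level_forward_regularity_quantity} in place of \eqref{eq:forward_regularity_quantity}, this gives
\[
  \dgnorm{\ell}{\widetilde A_\ell}{\widetilde c_\ell}{u-u_\ell}
  \le
  C\qp{h_\ell^s+D_\ell}\mathcal M_{s,r,\ell}(u;A)
  \le
  C\qp{h_\ell^s+D_\ell}\mathscr M_{s,r}
  \qquad\text{a.s.}
\]
A point to check here is that the hidden constant $C$ is deterministic and independent of $\xi$ and $\ell$. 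It comes from Theorem~\ref{thm:dg_coercive_continuous} (with $\gamma\ge\gamma_0$), Lemma~\ref{lem:dg_consistency_exact}, Lemma~\ref{lem:representation_defect_lq} and the approximation estimate \eqref{eq:dg_approximation_term}. These depend only on $\underline a,\overline a,\underline c,\overline c$, the chunkiness $\vartheta_\ast$, $p$, $d$ and $\gamma$, which are deterministic by \eqref{eq:coeff_bounds} and \eqref{eq:dg_discrete_coeff_bounds}. Measurability of the error in $\xi$ follows from measurability of the coefficients and continuity of the discrete solution map in the coefficients; I would only remark on this briefly.

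Next take $\leb{\varrho}(\Xi)$ norms and use Minkowski's inequality:
\[
  \Norm{\dgnorm{\ell}{\widetilde A_\ell}{\widetilde c_\ell}{u-u_\ell}}_{\leb\varrho(\Xi)}
  \lesssim
  h_\ell^s\Norm{\mathscr M_{s,r}}_{\leb\varrho(\Xi)}
  +
  \Norm{D_\ell\,\mathscr M_{s,r}}_{\leb\varrho(\Xi)}.
\]
The first term is bounded by $h_\ell^s\Norm{\mathscr M_{s,r}}_{\leb{2\varrho}(\Xi)}$, because $\Xi$ is a probability space and so Jensen's inequality (equivalently H\"older with the constant $1$) gives $\Norm{\cdot}_{\leb\varrho}\le\Norm{\cdot}_{\leb{2\varrho}}$. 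For the product term, the Cauchy--Schwarz/H\"older inequality with exponents $2\varrho,2\varrho$ gives $\Norm{D_\ell\mathscr M_{s,r}}_{\leb\varrho}\le\Norm{D_\ell}_{\leb{2\varrho}}\Norm{\mathscr M_{s,r}}_{\leb{2\varrho}}$. Hypothesis \eqref{eq:abstract_coeff_rate} then bounds this by $\varepsilon_\ell\Norm{\mathscr M_{s,r}}_{\leb{2\varrho}}$, and summing the two terms yields \eqref{eq:combined_transfer_bound}.

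The main obstacle is that $D_\ell$ and $\mathscr M_{s,r}$ are correlated random variables: both are driven by the same latent field $Y$. The estimate therefore cannot be factorised by independence, and this is why the exponent doubles to $2\varrho$ through H\"older's inequality. The remaining delicate point is to make sure that the regularity quantity entering \eqref{eq:strang_lq_final} is taken level by level, with the broken norm on $\mathcal T_\ell$, and then dominated by the supremum \eqref{eq:combined_regularity_quantity}. Doing so makes the single moment assumption \eqref{eq:combined_u_regularity} suffice uniformly for all $\ell=0,\ldots,L$.
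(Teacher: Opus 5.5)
Your proposal is correct and follows the paper's proof. You apply the pathwise bound \eqref{eq:strang_lq_compact}, dominated by $\mathscr M_{s,r}$, then take the $\leb{\varrho}(\Xi)$ norm and use H\"older's inequality with exponents $2\varrho,2\varrho$ on $D_\ell\mathscr M_{s,r}$ together with \eqref{eq:abstract_coeff_rate}, and you also make explicit the step $\Norm{\mathscr M_{s,r}}_{\leb{\varrho}(\Xi)}\le\Norm{\mathscr M_{s,r}}_{\leb{2\varrho}(\Xi)}$ for the first term, which the paper leaves implicit.
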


\begin{proof}
  The pathwise estimate \eqref{eq:strang_lq_compact} gives
  \begin{equation*}
    \dgnorm{\ell}{\widetilde A_\ell}{\widetilde c_\ell}{u-u_\ell}
    \lesssim
    \qp{h_\ell^s+D_\ell}\mathscr M_{s,r}.
  \end{equation*}
  Taking the $\leb{\varrho}(\Xi)$ norm and applying H\"older's
  inequality to $D_\ell\mathscr M_{s,r}$ yields
  \begin{equation*}
    \Norm{
      \dgnorm{\ell}{\widetilde A_\ell}{\widetilde c_\ell}{u-u_\ell}
    }_{\leb{\varrho}(\Xi)}
    \lesssim
    h_\ell^s\Norm{\mathscr M_{s,r}}_{\leb{\varrho}(\Xi)}
    +
    \Norm{D_\ell}_{\leb{2\varrho}(\Xi)}
    \Norm{\mathscr M_{s,r}}_{\leb{2\varrho}(\Xi)}.
  \end{equation*}
  The result follows from \eqref{eq:abstract_coeff_rate}.
\end{proof}

\paragraph{FFT coefficients.}
For the FFT construction define
\begin{equation}
  \varepsilon_{\mathrm{fft}}
  :=
  \Norm{
    \Norm{Y-\widetilde Y_{\mathrm{ref}}}_{\leb q(\W)}
  }_{\leb{2\varrho}(\Xi)}.
  \label{eq:fft_probabilistic_reference_error}
\end{equation}
Since the same reference representation is used on every level,
\begin{equation}
  \Norm{D_\ell}_{\leb{2\varrho}(\Xi)}
  \lesssim
  \varepsilon_{\mathrm{fft}}
  \qquad
  \forall\ell=0,\ldots,L.
  \label{eq:fft_coeff_rate_uniform_levels}
\end{equation}

\begin{corollary}[FFT-based coefficient sampling]
  \label{cor:combined_fft}
  Under the hypotheses of Theorem~\ref{thm:combined_transfer},
  \begin{equation}
    \Norm{
      \dgnorm{\ell}{\widetilde A_\ell}{\widetilde c_\ell}{u-u_\ell}
    }_{\leb{\varrho}(\Xi)}
    \lesssim
    \qp{h_\ell^s+\varepsilon_{\mathrm{fft}}}
    \Norm{\mathscr M_{s,r}}_{\leb{2\varrho}(\Xi)}.
    \label{eq:combined_fft_error_clean}
  \end{equation}
\end{corollary}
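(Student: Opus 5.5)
The corollary is an immediate specialisation of Theorem~\ref{thm:combined_transfer}, so the plan is to verify its single hypothesis \eqref{eq:abstract_coeff_rate} with the level-independent choice $\varepsilon_\ell:=\varepsilon_{\mathrm{fft}}$ and then read off the conclusion.

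First, I fix $q,r,s$ as in \eqref{eq:combined_qr_condition}--\eqref{eq:combined_s_condition} and a finite $\varrho\ge1$. For the FFT construction the represented coefficients are given by \eqref{eq:fft_represented_coefficients}, and the same latent field $\widetilde Y_{\mathrm{ref}}$ is used on every level by \eqref{eq:fft_common_latent_field}. Lemma~\ref{lem:fft_coeff_lipschitz} then gives, almost surely and for every $\ell$,
\[
  D_\ell\lesssim\Norm{Y-\widetilde Y_{\mathrm{ref}}}_{\leb q(\W)},
\]
with a constant depending only on $L_A$ and $L_c$ from \eqref{eq:coeff_map_lipschitz}, and not on $\ell$ or $N_{\mathrm{ref}}$. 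Taking the $\leb{2\varrho}(\Xi)$ norm, which is monotone, yields \eqref{eq:fft_coeff_rate_uniform_levels}:
\[
  \Norm{D_\ell}_{\leb{2\varrho}(\Xi)}\lesssim\varepsilon_{\mathrm{fft}}.
\]
This is exactly \eqref{eq:abstract_coeff_rate} with $\varepsilon_\ell=\varepsilon_{\mathrm{fft}}$.

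Second, I invoke Theorem~\ref{thm:combined_transfer} directly. Its proof uses the pathwise bound \eqref{eq:strang_lq_compact} together with H\"older's inequality in $\xi$, and so yields \eqref{eq:combined_fft_error_clean}.

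The only point needing care is that Lemma~\ref{lem:fft_coeff_lipschitz} requires the Lipschitz bound on the relevant range. The problem is that $Y$ itself is not truncated, while the Lipschitz bound is only assumed on the truncated range. I would handle this by noting that the coefficient maps are stipulated to act on the truncated latent field, and that the truncation $\Pi_{[-\beta,\beta]}$ is $1$-Lipschitz. Hence the composite map $y\mapsto\Phi(x,\Pi y)$ is globally Lipschitz, and the pointwise estimate holds everywhere.

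We also need the uniform bounds \eqref{eq:dg_discrete_coeff_bounds} for $\widetilde A_\ell,\widetilde c_\ell$, which hold by the choice of truncation. This is what makes Theorem~\ref{thm:dg_coercive_continuous}, and hence the Strang bound, applicable pathwise with constants uniform in $\xi$ and $\ell$.
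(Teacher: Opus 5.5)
Your proposal is correct and follows the paper's own route: the paper derives \eqref{eq:fft_coeff_rate_uniform_levels} from Lemma~\ref{lem:fft_coeff_lipschitz}, using the fact that the same reference field drives every level, and then applies Theorem~\ref{thm:combined_transfer} with $\varepsilon_\ell=\varepsilon_{\mathrm{fft}}$. Your remark that the truncation should be absorbed into the coefficient maps, which makes them globally Lipschitz, clarifies the paper's ``range relevant to the truncated latent field'' wording and does not change the argument.
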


For a sequence of increasingly fine target hierarchies, one may choose
$N_{\mathrm{ref}}=N_{\mathrm{ref}}(L)$ so that, for example,
\begin{equation}
  \varepsilon_{\mathrm{fft}}
  \lesssim
  h_L^{\alpha_{\mathrm{fft}}}.
  \label{eq:fft_latent_rate_h}
\end{equation}
The same represented reference field is then used for every level in
that hierarchy.  In particular, $\alpha_{\mathrm{fft}}\ge s$ prevents
reference-field approximation from degrading the forward spatial rate
on the finest level.

Let $u^{\mathrm{ref}}$ denote the continuum forward solution driven by
$\widetilde A_{\mathrm{ref}}
:=\Phi_A\qp{\cdot,\widetilde Y_{\mathrm{ref}}}$ and
$\widetilde c_{\mathrm{ref}}
:=\Phi_c\qp{\cdot,\widetilde Y_{\mathrm{ref}}}$.
Every $u_\ell$ in a fixed FFT hierarchy approximates this same
represented continuum solution.  Thus
$\varepsilon_{\mathrm{fft}}$ contributes to the bias relative to the
target continuum model, while it does not form a level-dependent
representation defect in the correction $Q_\ell-Q_{\ell-1}$.

\paragraph{SPDE coefficients.}
\begin{corollary}[SPDE-based coefficient sampling]
  \label{cor:combined_spde}
  Let $q\in[2,\infty)$ and assume
  \eqref{eq:spde_quasi_uniformity},
  \eqref{eq:dirichlet_h2_regular},
  Lemma~\ref{lem:spde_coeff_from_latent} and the hypotheses of
  Theorem~\ref{thm:combined_transfer}.  Set
  \begin{equation}
    \alpha_{\mathrm{spde}}:=\frac{4-d}{q}.
    \label{eq:combined_spde_rate}
  \end{equation}
  Then
  \begin{equation}
    \Norm{
      \dgnorm{\ell}{\widetilde A_\ell}{\widetilde c_\ell}{u-u_\ell}
    }_{\leb{\varrho}(\Xi)}
    \lesssim
    \qp{h_\ell^s+h_\ell^{\alpha_{\mathrm{spde}}}}
    \Norm{\mathscr M_{s,r}}_{\leb{2\varrho}(\Xi)}.
    \label{eq:combined_spde_error_clean}
  \end{equation}
\end{corollary}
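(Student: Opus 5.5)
The corollary follows from Theorem~\ref{thm:combined_transfer} once we take $\varepsilon_\ell:=h_\ell^{\alpha_{\mathrm{spde}}}$. So the only real task is to check hypothesis \eqref{eq:abstract_coeff_rate} for the SPDE-generated coefficients \eqref{eq:spde_represented_coefficients}.

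We take the probability exponent $2\varrho$ in place of $\varrho$ in Lemma~\ref{lem:spde_coeff_from_latent}; the lemma applies because it holds for every finite exponent. With the fixed spatial exponent $q\in[2,\infty)$ paired with $r$ through \eqref{eq:combined_qr_condition}, estimate \eqref{eq:spde_coeff_lq_bound} gives
\[
  \Norm{A-\widetilde A_\ell}_{\leb{2\varrho}\qp{\Xi;\leb q(\W)}}
  +
  \Norm{c-\widetilde c_\ell}_{\leb{2\varrho}\qp{\Xi;\leb q(\W)}}
  \lesssim
  h_\ell^{\alpha_{\mathrm{spde}}}.
\]
By the definition \eqref{eq:coefficient_error_level} and the triangle inequality in $\leb{2\varrho}(\Xi)$, this says $\Norm{D_\ell}_{\leb{2\varrho}(\Xi)}\lesssim h_\ell^{\alpha_{\mathrm{spde}}}$. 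The hidden constant is independent of $\ell$, because the strong rate \eqref{eq:spde_strong_rate_moments} under quasi-uniformity \eqref{eq:spde_quasi_uniformity} and $H^2$ regularity \eqref{eq:dirichlet_h2_regular} is uniform in $\ell$. The same holds for the Lipschitz constants $L_A,L_c$ and the deterministic $\leb\infty$ bounds used in the interpolation step.

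Inserting $\varepsilon_\ell=h_\ell^{\alpha_{\mathrm{spde}}}$ into \eqref{eq:combined_transfer_bound} then gives \eqref{eq:combined_spde_error_clean}.

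The one point needing care is that both the exact coefficients $A,c$ and the represented ones $\widetilde A_\ell,\widetilde c_\ell$ must satisfy the uniform bounds pathwise. This is needed both for the interpolation inequality in Lemma~\ref{lem:spde_coeff_from_latent} and for the pathwise Strang estimate \eqref{eq:strang_lq_compact} underlying Theorem~\ref{thm:combined_transfer}. Both requirements are guaranteed by the truncation assumption stated before \eqref{eq:coeff_map_lipschitz}, so there is no genuine obstacle.
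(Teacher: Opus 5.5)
Your proposal is correct and matches the paper's argument. The paper gives no separate proof: the corollary is Theorem~\ref{thm:combined_transfer} with $\varepsilon_\ell:=h_\ell^{\alpha_{\mathrm{spde}}}$, where hypothesis \eqref{eq:abstract_coeff_rate} comes from Lemma~\ref{lem:spde_coeff_from_latent} applied with exponent $2\varrho$ together with the triangle inequality in $\leb{2\varrho}(\Xi)$, exactly as you do. One minor slip: the truncation assumption giving the uniform bounds appears just after \eqref{eq:coeff_map_lipschitz}, not before it.
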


\paragraph{Quantities of interest.}
Since $u_\ell$ is discontinuous, a quantity of interest originally
defined only on $V=\hoz(\W)$ must admit an extension to the discrete
spaces.  We therefore assume that $Q$ is defined on $V+V_\ell$ and
satisfies
\begin{equation}
  \norm{Q(w)-Q(v)}
  \le
  L_Q
  \dgnorm{\ell}{\widetilde A_\ell}{\widetilde c_\ell}{w-v}
  \label{eq:qoi_dg_lipschitz}
\end{equation}
whenever $w\in V$ and $v\in V_\ell$.  Regional averages and bounded
$\leb2$ functionals satisfy such an estimate directly.

It follows that
\begin{equation}
  \Norm{Q(u)-Q(u_\ell)}_{\leb{\varrho}(\Xi)}
  \lesssim
  \Norm{
    \dgnorm{\ell}{\widetilde A_\ell}{\widetilde c_\ell}{u-u_\ell}
  }_{\leb{\varrho}(\Xi)}.
  \label{eq:combined_qoi_error}
\end{equation}
Hence
\begin{equation}
  \Norm{Q(u)-Q(u_\ell)}_{\leb{\varrho}(\Xi)}
  \lesssim
  \qp{h_\ell^s+\varepsilon_{\mathrm{fft}}}
  \Norm{\mathscr M_{s,r}}_{\leb{2\varrho}(\Xi)}
  \label{eq:qoi_fft_error}
\end{equation}
for the FFT construction, while
\begin{equation}
  \Norm{Q(u)-Q(u_\ell)}_{\leb{\varrho}(\Xi)}
  \lesssim
  \qp{h_\ell^s+h_\ell^{(4-d)/q}}
  \Norm{\mathscr M_{s,r}}_{\leb{2\varrho}(\Xi)}
  \label{eq:qoi_spde_error}
\end{equation}
for the SPDE construction.  These strong single-level estimates
provide the bias and correction bounds used in the multilevel
analysis below.

\section{Multilevel estimator and total error decomposition}
\label{sec:mlmc}

We now combine the single-level approximation estimates of
Section~\ref{subsec:combined_error} with the standard MLMC
bias--variance decomposition \cite{Giles2015Acta}.  The two sampling
strategies induce different coefficient approximations: the FFT
construction uses one common represented field throughout a target
hierarchy, while the SPDE construction uses level-dependent latent
fields coupled through consistent restrictions of the same white
noise.

\subsection{Multilevel estimator}
\label{subsec:mlmc_estimator}

Let $\qc{\mathcal T_\ell}_{\ell=0}^L$ be the forward mesh hierarchy,
let $u(\xi)\in V$ denote the continuum solution and let
$u_\ell(\xi)\in V_\ell$ denote the corresponding DG solution.  Define
\begin{equation}
  Q^\star(\xi):=Q\qp{u(\xi)},
  \qquad
  Q_\ell(\xi):=Q\qp{u_\ell(\xi)},
  \qquad
  \ell=0,\ldots,L.
  \label{eq:Qell_def}
\end{equation}
For $\ell\ge1$, the pair $\qp{Q_\ell,Q_{\ell-1}}$ is generated from
one coupled random input: a common reference-field realisation for FFT
and a fine white-noise load together with its restriction
\eqref{eq:noise_coupling} for SPDE.

Set
\begin{equation}
  \delta Q_0:=Q_0,
  \qquad
  \delta Q_\ell:=Q_\ell-Q_{\ell-1},
  \qquad
  \ell=1,\ldots,L.
  \label{eq:DeltaQ_def}
\end{equation}
The prescribed level marginals give
\begin{equation}
  \mathbb E[Q_L]
  =
  \sum_{\ell=0}^L\mathbb E[\delta Q_\ell].
  \label{eq:telescoping_identity}
\end{equation}
The coupling determines the correction variance
$\mathbb V[\delta Q_\ell]$ and hence the efficiency of the multilevel
estimator \cite{ChadaHoelJasraZouraris2022StrongCoupling}.

For independent samples
$\qc{\delta Q_\ell^{(n)}}_{n=1}^{N_\ell}$ on each level, define
\begin{equation}
  \widehat Q_L^{\mathrm{ML}}
  :=
  \sum_{\ell=0}^L
  \frac1{N_\ell}
  \sum_{n=1}^{N_\ell}
  \delta Q_\ell^{(n)}.
  \label{eq:mlmc_estimator}
\end{equation}
Then
\begin{equation}
  \mathbb E\qb{\widehat Q_L^{\mathrm{ML}}}
  =
  \mathbb E[Q_L],
  \qquad
  \mathbb V\qb{\widehat Q_L^{\mathrm{ML}}}
  =
  \sum_{\ell=0}^L
  \frac{\mathbb V[\delta Q_\ell]}{N_\ell}.
  \label{eq:mlmc_var}
\end{equation}

Let $\mathcal C_\ell$ denote the expected work of one sample of
$\delta Q_\ell$, including generation of the coupled random input and
the associated forward solves.  The expected total work is
\begin{equation}
  \mathbb E[\mathrm{Work}]
  =
  \sum_{\ell=0}^L N_\ell\mathcal C_\ell.
  \label{eq:work_total}
\end{equation}
Minimising \eqref{eq:work_total} subject to sampling variance
$\varepsilon_{\mathrm{samp}}^2$ gives
\begin{equation}
  N_\ell
  \propto
  \sqrt{\frac{\mathbb V[\delta Q_\ell]}{\mathcal C_\ell}},
  \label{eq:optimal_allocation}
\end{equation}
with integer allocation
\begin{equation}
  N_\ell
  =
  \left\lceil
    \varepsilon_{\mathrm{samp}}^{-2}
    \qp{
      \sum_{j=0}^L
      \sqrt{\mathbb V[\delta Q_j]\mathcal C_j}
    }
    \sqrt{\frac{\mathbb V[\delta Q_\ell]}{\mathcal C_\ell}}
  \right\rceil.
  \label{eq:optimal_allocation_explicit}
\end{equation}

\subsection{Bias and total error}
\label{subsec:mlmc_bias_variance}
\label{subsec:mlmc_sampling_consequences}

The mean-square error with respect to the continuum quantity
$\mathbb E[Q^\star]$ is
\begin{equation}
  \mathbb E\qb{
    \qp{
      \widehat Q_L^{\mathrm{ML}}
      -
      \mathbb E[Q^\star]
    }^2
  }
  =
  \norm{
    \mathbb E[Q^\star]
    -
    \mathbb E[Q_L]
  }^2
  +
  \sum_{\ell=0}^L
  \frac{\mathbb V[\delta Q_\ell]}{N_\ell}.
  \label{eq:mse_exact_decomposition}
\end{equation}

Let $q,r\in[2,\infty]$ and $s>0$ satisfy
\begin{equation}
  \frac1q+\frac1r+\frac12=1,
  \qquad
  \frac1r<s\le p,
  \label{eq:mlmc_regularities}
\end{equation}
and assume
\begin{equation}
  \mathscr M_{s,r}\in\leb2(\Xi).
  \label{eq:mlmc_regularity_moment}
\end{equation}
The DG-continuity of $Q$ and
Corollaries~\ref{cor:combined_fft} and
\ref{cor:combined_spde} then give the following bias bounds.

\begin{corollary}[FFT bias]
  \label{cor:mlmc_bias_fft}
  For the common-reference FFT construction,
  \begin{equation}
    \norm{
      \mathbb E[Q^\star]
      -
      \mathbb E[Q_L]
    }
    \lesssim
    \qp{
      h_L^s+\varepsilon_{\mathrm{fft}}
    }
    \Norm{\mathscr M_{s,r}}_{\leb2(\Xi)}.
    \label{eq:mlmc_bias_fft}
  \end{equation}
  If the master reference resolution is selected so that
  \begin{equation}
    \varepsilon_{\mathrm{fft}}
    \lesssim
    h_L^{\alpha_{\mathrm{fft}}},
    \label{eq:mlmc_fft_rate}
  \end{equation}
  then
  \begin{equation}
    \norm{
      \mathbb E[Q^\star]
      -
      \mathbb E[Q_L]
    }
    \lesssim
    \qp{
      h_L^s+h_L^{\alpha_{\mathrm{fft}}}
    }
    \Norm{\mathscr M_{s,r}}_{\leb2(\Xi)}.
    \label{eq:mlmc_bias_fft_rate}
  \end{equation}
  In particular, $\alpha_{\mathrm{fft}}\ge s$ gives the spatial bias
  rate $h_L^s$.
\end{corollary}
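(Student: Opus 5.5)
The bias bound will follow from the strong single-level estimate for the quantity of interest, specialised to the FFT construction with $\varrho=1$. First I will move the expectation inside, using linearity of expectation and Jensen's inequality:
\[
  \norm{\mathbb E[Q^\star]-\mathbb E[Q_L]}
  =\norm{\mathbb E\qb{Q(u)-Q(u_L)}}
  \le\Norm{Q(u)-Q(u_L)}_{\leb1(\Xi)}.
\]
This replaces the bias by a strong $\leb1(\Xi)$ error on the finest level $\ell=L$.

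Next I will bound this strong error using the DG-Lipschitz property \eqref{eq:qoi_dg_lipschitz} with $w=u\in V$ and $v=u_L\in V_L$. This gives pathwise $\norm{Q(u)-Q(u_L)}\le L_Q\dgnorm{L}{\widetilde A_L}{\widetilde c_L}{u-u_L}$, which is \eqref{eq:combined_qoi_error}. I will then apply Corollary~\ref{cor:combined_fft} with $\varrho=1$ and $\ell=L$, obtaining
\[
  \Norm{\dgnorm{L}{\widetilde A_L}{\widetilde c_L}{u-u_L}}_{\leb1(\Xi)}
  \lesssim\qp{h_L^s+\varepsilon_{\mathrm{fft}}}\Norm{\mathscr M_{s,r}}_{\leb2(\Xi)}.
\]
This is exactly \eqref{eq:mlmc_bias_fft}.

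The step that needs the most care is checking the hypotheses of Corollary~\ref{cor:combined_fft}, that is, of Theorem~\ref{thm:combined_transfer}, at $\varrho=1$.
\begin{itemize}
  \item The required moment $\mathscr M_{s,r}\in\leb{2\varrho}(\Xi)=\leb2(\Xi)$ is precisely the assumption \eqref{eq:mlmc_regularity_moment}.
  \item The exponents $q,r,s$ are fixed by \eqref{eq:mlmc_regularities}.
  \item The coefficient-error moment $\Norm{D_L}_{\leb2(\Xi)}\lesssim\varepsilon_{\mathrm{fft}}$ follows from \eqref{eq:fft_coeff_rate_uniform_levels}. That bound comes from Lemma~\ref{lem:fft_coeff_lipschitz} applied pathwise, then the $\leb2(\Xi)$ norm, with $\varepsilon_{\mathrm{fft}}$ defined at $2\varrho=2$.
\end{itemize}
I should also note that the hidden constant absorbs $L_Q$ and is independent of $L$ and $N_{\mathrm{ref}}$, because every constant involved in the chain is.

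For the second statement, I will substitute \eqref{eq:mlmc_fft_rate} into \eqref{eq:mlmc_bias_fft}, which gives \eqref{eq:mlmc_bias_fft_rate}. If $\alpha_{\mathrm{fft}}\ge s$, then $h_L^{\alpha_{\mathrm{fft}}}\le h_L^{s}$ provided $h_L\le1$. When $h_L>1$, the constant $\max(1,\operatorname{diam}\W)^{\alpha_{\mathrm{fft}}-s}$ can be absorbed into $\lesssim$, since $h_L\le\operatorname{diam}\W$. Either way the bias is $O(h_L^s)$, which is the final claim.
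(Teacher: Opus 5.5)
Your proposal is correct and follows the paper's own argument. The paper only says that the DG-continuity of $Q$ and Corollary~\ref{cor:combined_fft} give the bias bound; you make explicit the Jensen step to the strong $L^1(\Xi)$ error, the application of \eqref{eq:qoi_dg_lipschitz} and Corollary~\ref{cor:combined_fft} with $\varrho=1$, $\ell=L$, the hypothesis checks at $\varrho=1$, and the substitution of \eqref{eq:mlmc_fft_rate}.
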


\begin{remark}[FFT representation error and level corrections]
  \label{rem:fft_representation_cancellation}
  Let $u^{\mathrm{ref}}$ denote the continuum solution driven by
  $\widetilde A_{\mathrm{ref}}
  =\Phi_A\qp{\cdot,\widetilde Y_{\mathrm{ref}}}$ and
  $\widetilde c_{\mathrm{ref}}
  =\Phi_c\qp{\cdot,\widetilde Y_{\mathrm{ref}}}$, and set
  $Q^{\mathrm{ref}}:=Q(u^{\mathrm{ref}})$.  Both $Q_\ell$ and
  $Q_{\ell-1}$ approximate this same represented continuum output.
  Thus $\varepsilon_{\mathrm{fft}}$ controls the bias between
  $Q^{\mathrm{ref}}$ and $Q^\star$, while the coupled correction
  $Q_\ell-Q_{\ell-1}$ measures spatial approximation of
  $Q^{\mathrm{ref}}$.
\end{remark}

\begin{corollary}[SPDE bias]
  \label{cor:mlmc_bias_spde}
  Let $q\in[2,\infty)$ and set
  $\alpha_{\mathrm{spde}}:=(4-d)/q$.  Under the hypotheses of
  Corollary~\ref{cor:combined_spde},
  \begin{equation}
    \norm{
      \mathbb E[Q^\star]
      -
      \mathbb E[Q_L]
    }
    \lesssim
    \qp{
      h_L^s+h_L^{\alpha_{\mathrm{spde}}}
    }
    \Norm{\mathscr M_{s,r}}_{\leb2(\Xi)}.
    \label{eq:mlmc_bias_spde}
  \end{equation}
\end{corollary}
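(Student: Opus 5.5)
The plan is to reduce the bias to a strong error and then apply the single-level estimate \eqref{eq:qoi_spde_error} with $\varrho=1$.

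First, Jensen's inequality and linearity of expectation give
\[
  \norm{\mathbb E[Q^\star]-\mathbb E[Q_L]}
  \le
  \mathbb E\qb{\norm{Q(u)-Q(u_L)}}
  =
  \Norm{Q(u)-Q(u_L)}_{\leb1(\Xi)}.
\]
This step needs $Q^\star$ and $Q_L$ to be integrable. That follows from the uniform stability bounds: \eqref{eq:uniform_moment_bound_u} for $u$, and Corollary~\ref{cor:dg_discrete_wellposed} with the uniform coefficient bounds \eqref{eq:dg_discrete_coeff_bounds} for $u_L$. The Lipschitz property \eqref{eq:qoi_dg_lipschitz} then transfers these bounds to the quantity of interest.

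Second, the DG-Lipschitz continuity \eqref{eq:qoi_dg_lipschitz} is applied pathwise with $w=u(\xi)\in V$ and $v=u_L(\xi)\in V_L$. This yields \eqref{eq:combined_qoi_error} with $\varrho=1$:
\[
  \Norm{Q(u)-Q(u_L)}_{\leb1(\Xi)}
  \lesssim
  \Norm{\dgnorm{L}{\widetilde A_L}{\widetilde c_L}{u-u_L}}_{\leb1(\Xi)}.
\]

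Third, Corollary~\ref{cor:combined_spde} is invoked at level $\ell=L$ with $\varrho=1$. Its input is Lemma~\ref{lem:spde_coeff_from_latent}, taken with moment exponent $2\varrho=2$, which supplies the hypothesis \eqref{eq:abstract_coeff_rate} of Theorem~\ref{thm:combined_transfer} with $\varepsilon_L=h_L^{\alpha_{\mathrm{spde}}}$. The result is
\[
  \Norm{\dgnorm{L}{\widetilde A_L}{\widetilde c_L}{u-u_L}}_{\leb1(\Xi)}
  \lesssim
  \qp{h_L^s+h_L^{\alpha_{\mathrm{spde}}}}\Norm{\mathscr M_{s,r}}_{\leb2(\Xi)}.
\]
Chaining the three inequalities gives \eqref{eq:mlmc_bias_spde}.

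The only point needing care is that the moment exponents match. The H\"older step in Theorem~\ref{thm:combined_transfer} places both $D_L$ and $\mathscr M_{s,r}$ in $\leb{2\varrho}(\Xi)=\leb2(\Xi)$. This is exactly the assumption \eqref{eq:mlmc_regularity_moment}, together with the finite-$\varrho$ case of \eqref{eq:spde_coeff_lq_bound} for $q\in[2,\infty)$. No obstacle beyond this bookkeeping is expected.
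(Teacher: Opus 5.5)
Your proposal is correct and matches the paper's own (one-line) argument: you bound the bias by the $L^1(\Xi)$ strong error via Jensen, use the DG-Lipschitz property \eqref{eq:qoi_dg_lipschitz} to reach \eqref{eq:combined_qoi_error}, and apply Corollary~\ref{cor:combined_spde} with $\varrho=1$, whose moment requirement is then exactly \eqref{eq:mlmc_regularity_moment}. Your remarks on integrability of $Q^\star$ and $Q_L$, and on using Lemma~\ref{lem:spde_coeff_from_latent} with exponent $2\varrho=2$, are sound bookkeeping that the paper leaves implicit.
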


\subsection{Correction variance and computational complexity}
\label{subsec:mlmc_complexity}

Assume that the hierarchy is quasi-geometric,
\begin{equation}
  h_\ell\simeq h_0\rho^{-\ell},
  \qquad
  \rho>1,
  \label{eq:mlmc_geometric_hierarchy}
\end{equation}
and that, for exponents $\alpha,\beta,\gamma>0$,
\begin{equation}
  \norm{\mathbb E[Q^\star-Q_\ell]}
  \lesssim
  h_\ell^\alpha,
  \qquad
  \mathbb V[\delta Q_\ell]
  \lesssim
  h_\ell^\beta,
  \qquad
  \mathcal C_\ell
  \lesssim
  h_\ell^{-\gamma}.
  \label{eq:mlmc_rates}
\end{equation}
The bias estimates above give
\begin{equation}
  \alpha
  =
  \min\qc{s,\alpha_{\mathrm{coeff}}},
  \qquad
  \alpha_{\mathrm{coeff}}
  =
  \begin{cases}
    \alpha_{\mathrm{fft}}, & \text{FFT},\\
    \alpha_{\mathrm{spde}}, & \text{SPDE}.
  \end{cases}
  \label{eq:mlmc_effective_alpha}
\end{equation}
The exponents $\beta$ and $\gamma$ quantify, respectively, the decay
of the coupled correction variance and the growth of coupled-sample
work.

The following standard MLMC complexity result is recalled in the form
used here; see \cite{Giles2015Acta}.

\begin{theorem}[MLMC complexity]
  \label{thm:mlmc_complexity}
  Assume \eqref{eq:mlmc_geometric_hierarchy}--\eqref{eq:mlmc_rates}
  and
  \begin{equation}
    \alpha
    \ge
    \frac12\min\qc{\beta,\gamma}.
    \label{eq:mlmc_rate_compatibility}
  \end{equation}
  Choose $L$ so that
  \begin{equation}
    h_L^\alpha\lesssim\varepsilon
    \label{eq:mlmc_choose_L}
  \end{equation}
  and allocate samples according to
  \eqref{eq:optimal_allocation_explicit} with sampling variance of
  order $\varepsilon^2$.  Then the resulting estimator has
  root-mean-square error of order $\varepsilon$ and expected work
  satisfying
  \begin{equation}
    \mathbb E[\mathrm{Work}]
    \lesssim
    \begin{cases}
      \varepsilon^{-2}, & \beta>\gamma,\\[1mm]
      \varepsilon^{-2}\qp{\log\varepsilon^{-1}}^2,
        & \beta=\gamma,\\[1mm]
      \varepsilon^{-2-(\gamma-\beta)/\alpha},
        & \beta<\gamma.
    \end{cases}
    \label{eq:mlmc_complexity}
  \end{equation}
\end{theorem}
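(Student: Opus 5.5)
The proof follows the standard Giles argument. We split the mean-square error via \eqref{eq:mse_exact_decomposition} into squared bias plus sampling variance and make each at most $\varepsilon^2/2$.

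\textbf{Choice of $L$.} By \eqref{eq:mlmc_rates} and \eqref{eq:mlmc_choose_L}, the bias is $\lesssim h_L^\alpha\lesssim\varepsilon$. Using \eqref{eq:mlmc_geometric_hierarchy}, we take
\[
L=\left\lceil \alpha^{-1}\log_\rho\qp{h_0^\alpha\varepsilon^{-1}}\right\rceil ,
\]
so that $L\simeq\log\varepsilon^{-1}$ and $h_L^{-1}\simeq\varepsilon^{-1/\alpha}$.

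\textbf{Sample allocation.} With \eqref{eq:optimal_allocation_explicit} and $\varepsilon_{\mathrm{samp}}^2=\varepsilon^2/2$, we get $\sum_\ell\mathbb V[\delta Q_\ell]/N_\ell\le\varepsilon^2/2$ directly from the ceiling. The work bound is then
\[
\mathbb E[\mathrm{Work}]\le\sum_\ell (N_\ell)\mathcal C_\ell
\lesssim \varepsilon^{-2}\Big(\sum_{\ell=0}^L\sqrt{\mathbb V[\delta Q_\ell]\mathcal C_\ell}\Big)^2+\sum_{\ell=0}^L\mathcal C_\ell .
\]
The first term comes from dropping the ceiling, the second from the $+1$ in each $N_\ell$.

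\textbf{Main sum.} Inserting the rates gives $\sqrt{\mathbb V[\delta Q_\ell]\mathcal C_\ell}\lesssim h_\ell^{(\beta-\gamma)/2}\simeq\rho^{\ell(\gamma-\beta)/2}$, a geometric series. There are three cases.
\begin{itemize}
\item If $\beta>\gamma$, the series is bounded uniformly in $L$, giving $\varepsilon^{-2}$.
\item If $\beta=\gamma$, the sum is $L+1\lesssim\log\varepsilon^{-1}$, giving $\varepsilon^{-2}(\log\varepsilon^{-1})^2$.
\item If $\beta<\gamma$, the sum is dominated by its last term, $h_L^{-(\gamma-\beta)/2}\simeq\varepsilon^{-(\gamma-\beta)/(2\alpha)}$; squaring yields $\varepsilon^{-2-(\gamma-\beta)/\alpha}$.
\end{itemize}

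\textbf{Remainder sum.} This is the step where the compatibility condition \eqref{eq:mlmc_rate_compatibility} is needed, and I expect it to be the main obstacle, since it is the only place requiring care. We have $\sum_\ell\mathcal C_\ell\lesssim h_L^{-\gamma}\simeq\varepsilon^{-\gamma/\alpha}$, which must be absorbed into the leading bound in each case.
\begin{itemize}
\item If $\min\{\beta,\gamma\}=\gamma$ (i.e.\ $\beta\ge\gamma$), then $\alpha\ge\gamma/2$, so $\varepsilon^{-\gamma/\alpha}\le\varepsilon^{-2}$. This is dominated in both the $\beta>\gamma$ and $\beta=\gamma$ cases.
\item If $\beta<\gamma$, then $\alpha\ge\beta/2$, so
\[
\frac{\gamma}{\alpha}=\frac{\beta}{\alpha}+\frac{\gamma-\beta}{\alpha}\le 2+\frac{\gamma-\beta}{\alpha},
\]
and $\varepsilon^{-\gamma/\alpha}$ is again dominated.
\end{itemize}
The comparisons $h_L^{-1}\simeq\varepsilon^{-1/\alpha}$ need both upper and lower bounds on $h_L$ in terms of $\varepsilon$. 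These hold because $L$ is taken as the smallest integer satisfying \eqref{eq:mlmc_choose_L}, so $h_{L}\gtrsim\rho^{-1}h_{L-1}\gtrsim\varepsilon^{1/\alpha}$. All constants are independent of $\varepsilon$. Combining the bias and variance estimates gives RMSE $\lesssim\varepsilon$, which proves \eqref{eq:mlmc_complexity}.
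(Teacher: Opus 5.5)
Your proposal is correct and is precisely the standard Giles argument the paper relies on; the paper gives no proof of its own and simply recalls the result from \cite{Giles2015Acta}. You also correctly make explicit a point the statement leaves implicit: $L$ must be the smallest level with $h_L^\alpha\lesssim\varepsilon$, so that $\varepsilon^{1/\alpha}\lesssim h_L$ and the cost sums can be bounded in terms of $\varepsilon$.
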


\begin{remark}[Common-reference FFT cost]
  \label{rem:fft_reference_cost}
  For the common-reference FFT construction,
  $N_{\mathrm{ref}}=N_{\mathrm{ref}}(L)$ makes reference-field
  generation a hierarchy-dependent component of the coupled-sample
  cost.  The complete measured costs used in
  Section~\ref{sec:numerics} include reference-field generation
  together with the forward solves.
\end{remark}

\section{Numerical experiments}
\label{sec:numerics}

\newcommand{\numericalscale}{0.75}
\setlength{\textfloatsep}{10pt plus 2pt minus 2pt}
\setlength{\floatsep}{8pt plus 2pt minus 2pt}
\setlength{\intextsep}{10pt plus 2pt minus 2pt}
\captionsetup[figure]{skip=4pt}
\captionsetup[subfigure]{font=footnotesize,labelfont=normalfont,skip=2pt}
\captionsetup{subrefformat=parens}

The experiments examine the spatial accuracy of the polytopic DG
method, approximation of the random coefficients, decay of the
multilevel corrections, and the resulting MLMC work.  All forward
problems are posed on $\W=(0,1)^2$ and use the lifting-based symmetric
interior-penalty method of Section~\ref{sec:polytopic_dg} with
discontinuous $\mathbb P_1$ polynomials and penalty parameter
$\gamma=12$. They are implemented within the package
\cite{evans2026reyna}.

The finest meshes are clipped Voronoi tessellations. Coarser levels
are obtained by exact agglomeration.  The hierarchy used for the field
visualisations contains $4$, $16$, $64$, $256$ and $1024$ cells.  The
finest Voronoi mesh has between four and eight faces per cell, while
the agglomerates have increasingly complex polytopic boundaries.  The
star-ball diagnostic is positive on every element of the displayed
hierarchy.

\begin{table}[htbp]
  \centering
  \scalebox{\numericalscale}{%
    \begin{tabular}{rrrrrr}
\toprule
$\ell$ & cells & $h_\ell$ & DG dofs & $\min_K\rho_K/h_K$ & mean faces\\
\midrule
0 & 4 & 7.1545e-01 & 12 & 7.261e-03 & 94.5\\
1 & 16 & 3.7081e-01 & 48 & 6.983e-03 & 54.1\\
2 & 64 & 1.8818e-01 & 192 & 3.163e-02 & 28.0\\
3 & 256 & 1.0217e-01 & 768 & 1.351e-01 & 13.5\\
4 & 1024 & 5.1823e-02 & 3072 & 2.263e-01 & 5.9\\
\bottomrule
\end{tabular}
  }
  \caption{\em
    Agglomerated Voronoi hierarchy.  The final two columns report the
    minimum chunkiness parameter and the mean number of polytopic
    faces.}
  \label{tab:numerical_hierarchy}
\end{table}

Except in the verification experiment, we solve
\begin{equation}
  -\nabla\cdot\qp{a(x,\xi)\nabla u(x,\xi)}
  +c(x,\xi)u(x,\xi)
  =
  4\pi^2\sin{\pi x_1}\sin{\pi x_2},
  \qquad
  u|_{\partial\W}=0,
  \label{eq:numerical_forward_problem}
\end{equation}
with
\begin{equation}
  a(x,\xi)
  =
  \exp\qp{0.25\Pi_{[-2.5,2.5]}Y(x,\xi)},
  \qquad
  c(x,\xi)
  =
  2\pi^2\exp\qp{0.15\Pi_{[-2.5,2.5]}Y(x,\xi)}.
  \label{eq:numerical_random_coefficients}
\end{equation}
The quantity of interest is the domain average
\begin{equation}
  Q(u):=\frac1{|\W|}\int_\W u(x)\d x.
  \label{eq:numerical_qoi}
\end{equation}

Both random-field constructions use $\kappa=8$ and are normalised to
unit bulk latent variance.  The FFT sampler uses
\begin{equation}
  S(\vec k)\propto\qp{\kappa^2+\norm{\vec k}^2}^{-2}
  \label{eq:numerical_fft_spectrum}
\end{equation}
on a periodic box of side length $1.5$, with the zero Fourier mode
retained.  The SPDE sampler uses
\begin{equation}
  \qp{\kappa^2-\Delta}Y
  =
  \sqrt{4\pi}\,\kappa\,\mathcal W
  \label{eq:numerical_spde_model}
\end{equation}
with homogeneous Dirichlet conditions.  The factor $\sqrt{4\pi}\,\kappa$
gives unit variance for the corresponding whole-space field in two
dimensions.  Thus the two samplers have matched Mat\'ern-type bulk
spectra and identical coefficient maps, although the boundary
condition modifies the SPDE covariance near $\partial\W$ and the two
constructions are not coupled pathwise to one another.

\subsection{Coupled coefficient fields and solutions}
\label{subsec:numerical_hierarchy}

Figure~\ref{fig:numerical_coupling} compares representative
coefficients and forward solutions on the three finest displayed
levels, containing $64$, $256$ and $1024$ polytopes.  Panels
\subref{fig:numerical_coupling_fft} and
\subref{fig:numerical_coupling_spde} show FFT and SPDE sampling,
respectively.

For FFT, one reference-field realisation is evaluated on every
polytopic mesh.  The coefficient is therefore pathwise common across
the hierarchy.  The corresponding solutions converge visibly as the
polytopic mesh is refined.

For SPDE sampling, one fine white-noise load is restricted according
to
$b_{\ell-1}=\qp{\mathbf P_{\ell-1}^{\ell}}^\top b_\ell$,
and the latent SPDE is solved separately on each level.  The
coefficient fields are therefore distinct approximations driven by
the same underlying discrete white noise.

\begin{figure}[!htbp]
  \centering
  \begin{subfigure}[t]{0.49\textwidth}
    \centering
    \makebox[0.485\linewidth]{\footnotesize Diffusion coefficient}\hfill
    \makebox[0.485\linewidth]{\footnotesize DG solution}\par\smallskip
    \includegraphics[width=0.485\linewidth]{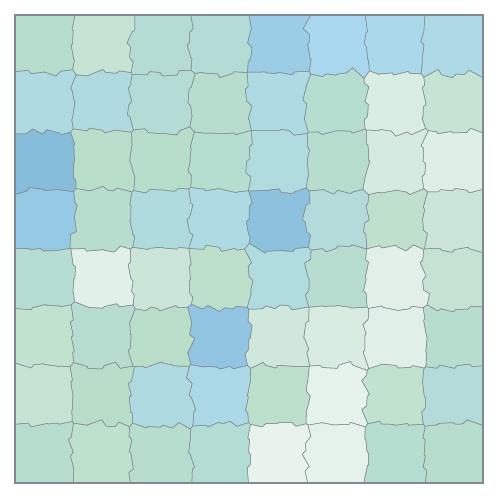}\hfill
    \includegraphics[width=0.485\linewidth]{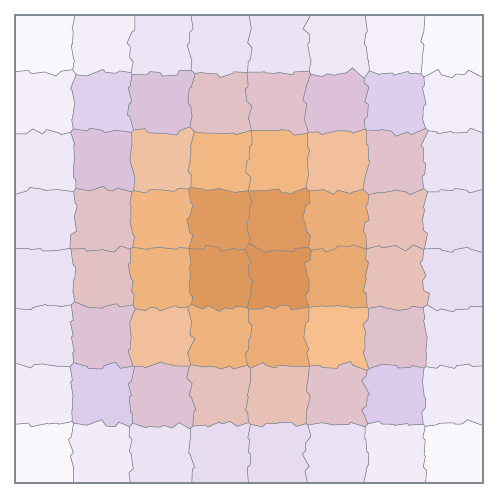}\par\vspace{2pt}
    \includegraphics[width=0.485\linewidth]{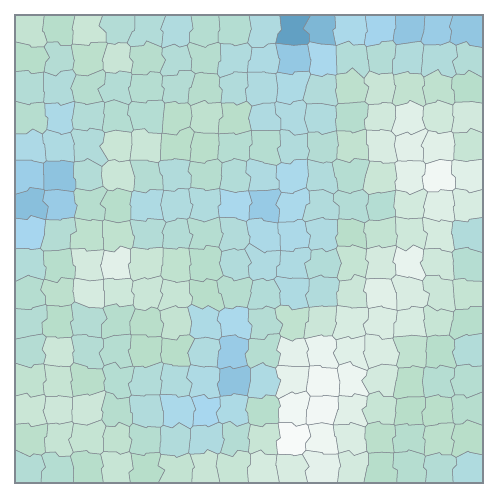}\hfill
    \includegraphics[width=0.485\linewidth]{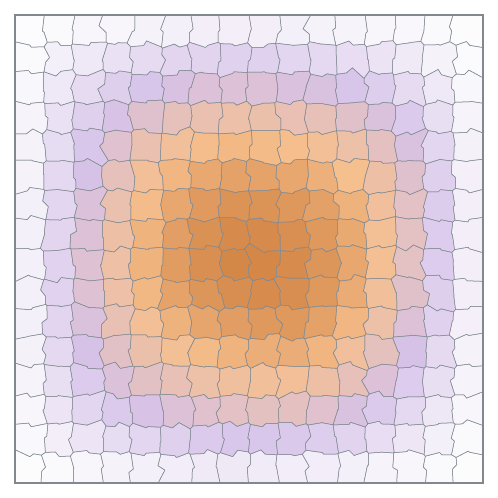}\par\vspace{2pt}
    \includegraphics[width=0.485\linewidth]{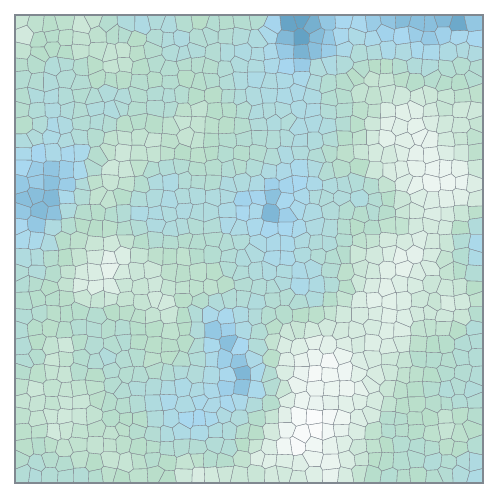}\hfill
    \includegraphics[width=0.485\linewidth]{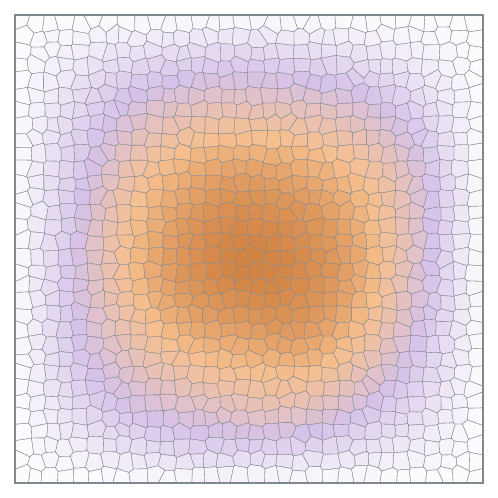}
    \caption{FFT sampler.}
    \label{fig:numerical_coupling_fft}
  \end{subfigure}\hfill
  \begin{subfigure}[t]{0.49\textwidth}
    \centering
    \makebox[0.485\linewidth]{\footnotesize Diffusion coefficient}\hfill
    \makebox[0.485\linewidth]{\footnotesize DG solution}\par\smallskip
    \includegraphics[width=0.485\linewidth]{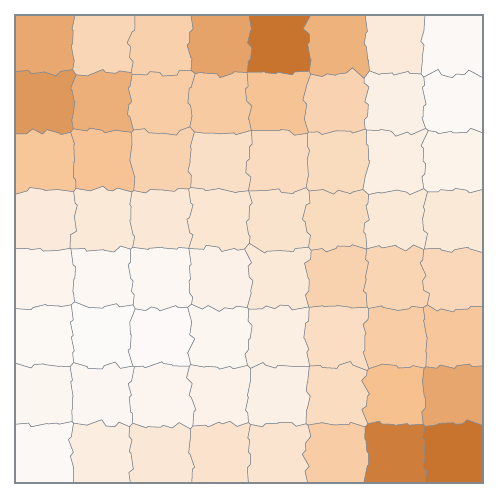}\hfill
    \includegraphics[width=0.485\linewidth]{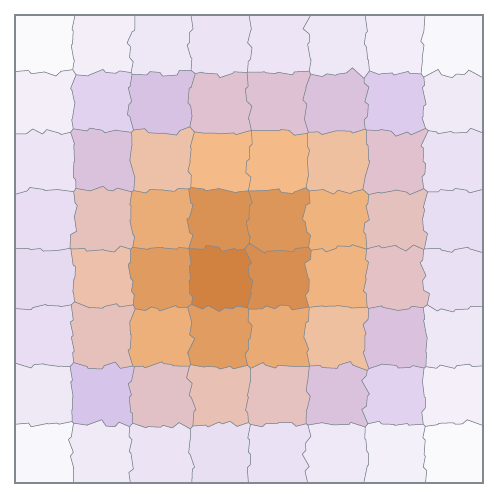}\par\vspace{2pt}
    \includegraphics[width=0.485\linewidth]{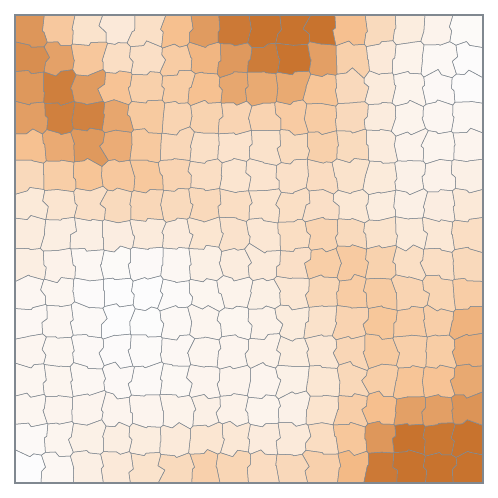}\hfill
    \includegraphics[width=0.485\linewidth]{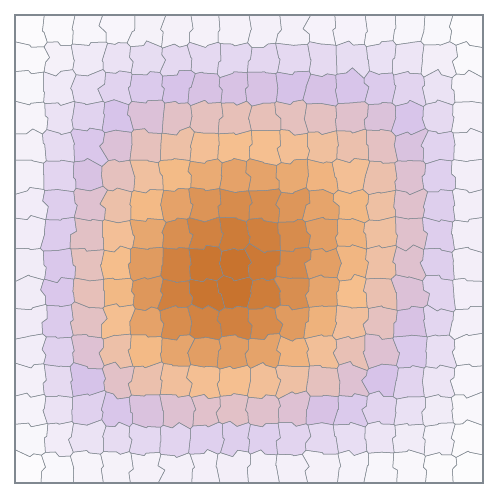}\par\vspace{2pt}
    \includegraphics[width=0.485\linewidth]{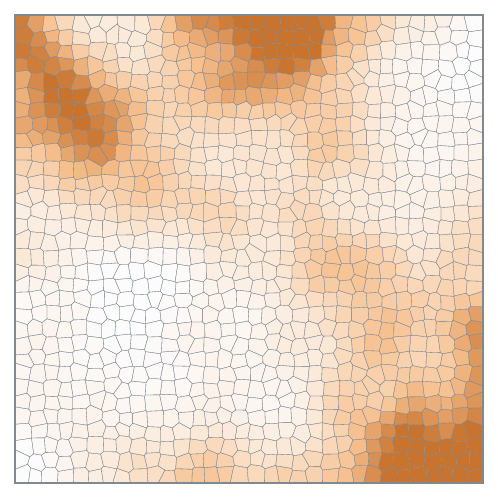}\hfill
    \includegraphics[width=0.485\linewidth]{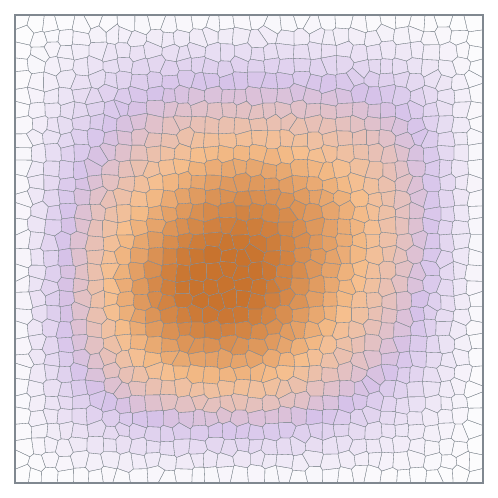}
    \caption{SPDE sampler.}
    \label{fig:numerical_coupling_spde}
  \end{subfigure}
  \caption{\em
    Coupled diffusion coefficients and $P_1$ DG solutions on agglomerated
    Voronoi meshes.  Rows contain $64$, $256$ and $1024$ cells from top
    to bottom; columns within each panel show the coefficient and
    solution.  FFT uses a common reference-field realisation, while
    SPDE uses consistently restricted white-noise loads.}
  \label{fig:numerical_coupling}
\end{figure}

\subsection{Forward discretisation verification}
\label{subsec:numerical_verification}

For spatial verification we solve
\begin{equation}
  -\Delta u+2\pi^2u=4\pi^2u,
  \qquad
  u(x_1,x_2)=\sin{\pi x_1}\sin{\pi x_2},
  \label{eq:numerical_manufactured_solution}
\end{equation}
on independently generated Voronoi meshes with $4$--$1024$ cells;
Figure~\ref{fig:numerical_spatial_verification} reports the resulting
spatial convergence.

\begin{figure}[!htbp]
  \centering
  \includegraphics[width=0.465\textwidth]
    {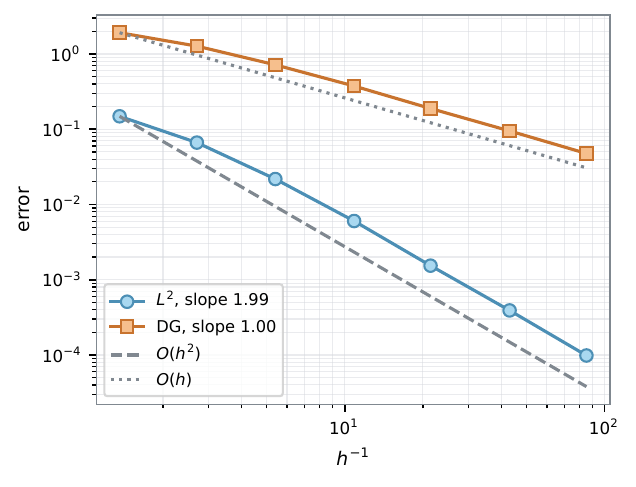}
  \caption{\em
    Spatial convergence on independently generated Voronoi meshes.}
  \label{fig:numerical_spatial_verification}
\end{figure}

\subsection{Coefficient representation}
\label{subsec:numerical_representation}

For the FFT sampler, lower-resolution fields are obtained by spectral
restriction of common $128^2$ master realisations.  The latent and
diffusion errors exhibit approximately first-order decay with the
reference spacing, with fitted exponents close to $1.06$; see
Figure~\ref{fig:numerical_representation}\subref{fig:numerical_fft_representation}.

Refining the forward mesh with a fixed, under-resolved FFT field
produces a representation-error floor.  Refining the random field
together with the forward discretisation removes this floor, directly
illustrating the two contributions in
\eqref{eq:strang_lq_compact}; see
Figure~\ref{fig:numerical_representation}\subref{fig:numerical_fft_strang}.

\begin{figure}[!htbp]
  \centering
  \begin{subfigure}[t]{0.48\textwidth}
    \centering
    \includegraphics[width=0.97\linewidth]
      {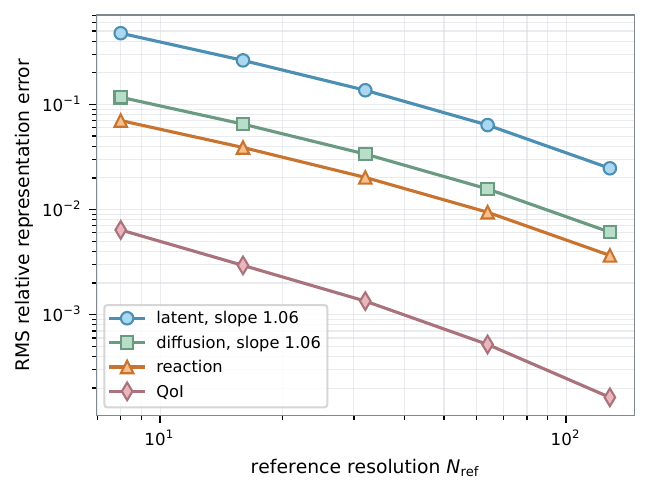}
    \caption{Representation errors.}
    \label{fig:numerical_fft_representation}
  \end{subfigure}\hfill
  \begin{subfigure}[t]{0.48\textwidth}
    \centering
    \includegraphics[width=0.97\linewidth]
      {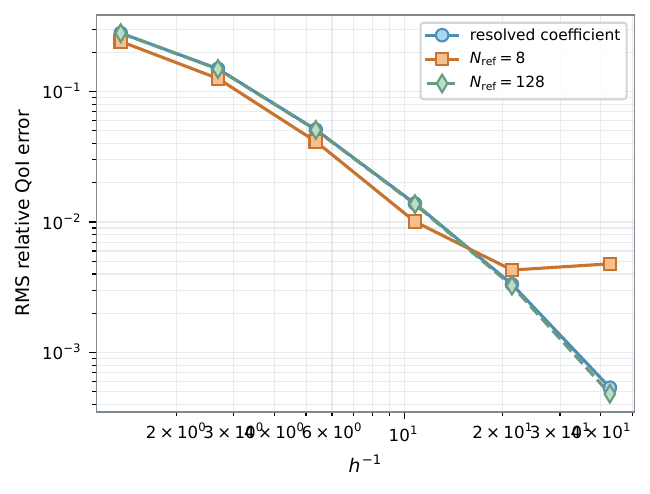}
    \caption{Strang decomposition.}
    \label{fig:numerical_fft_strang}
  \end{subfigure}
  \caption{\em
    FFT coefficient representation: (a) latent-field, diffusion
    coefficient and QoI errors under refinement of a common spectral
    representation; (b) interaction between forward discretisation
    and coefficient-representation errors.}
  \label{fig:numerical_representation}
\end{figure}

The corresponding strong approximation of the SPDE latent field is
studied separately in \cite{pryer2026polytopic}, where the sharp
mean-square rate for the polytopic SIP approximation driven by
canonical discrete white noise is established and verified
numerically.  We therefore focus here on the inter-level coupling and
its effect on the forward multilevel corrections.

\subsection{Multilevel correction decay}
\label{subsec:numerical_variance}

The correction experiment uses the six-level agglomerated hierarchy
reported in Tables~\ref{tab:numerical_variance_fft} and
\ref{tab:numerical_variance_spde}.  For FFT, the fitted weak, variance
and cost exponents are
\begin{equation}
  \widehat\alpha_{\mathrm{fft}}=1.45,
  \qquad
  \widehat\beta_{\mathrm{fft}}=3.12,
  \qquad
  \widehat\gamma_{\mathrm{fft}}=1.27,
  \label{eq:numerical_fft_rates}
\end{equation}
while the SPDE coupling gives
\begin{equation}
  \widehat\alpha_{\mathrm{spde}}=1.39,
  \qquad
  \widehat\beta_{\mathrm{spde}}=2.93,
  \qquad
  \widehat\gamma_{\mathrm{spde}}=1.28.
  \label{eq:numerical_spde_rates}
\end{equation}
Thus $\widehat\beta>\widehat\gamma$ for both coupling strategies.
Figure~\ref{fig:numerical_corrections}, panels
\subref{fig:numerical_weak_decay} and
\subref{fig:numerical_variance_decay}, shows the mean corrections
and correction variances, respectively.

The coupling becomes extremely strong on the finest pair.  At the
precision reported in Tables~\ref{tab:numerical_variance_fft} and
\ref{tab:numerical_variance_spde}, the neighbouring-level
correlations approach one, while
\begin{equation}
  \frac{\mathbb V[\delta Q_\ell]}{\mathbb V[Q_\ell]}
  =
  1.30\times10^{-5}
  \quad\text{for FFT},
  \qquad
  1.42\times10^{-5}
  \quad\text{for SPDE}
  \label{eq:numerical_finest_variance_ratios}
\end{equation}
on the finest pair.  The decay of the variance ratios and $1-\rho$
is shown in
Figure~\ref{fig:numerical_corrections}\subref{fig:numerical_coupling_diagnostic}.

\begin{figure}[!htbp]
  \centering
  \begin{subfigure}[t]{0.32\textwidth}
    \centering
    \includegraphics[width=\linewidth]{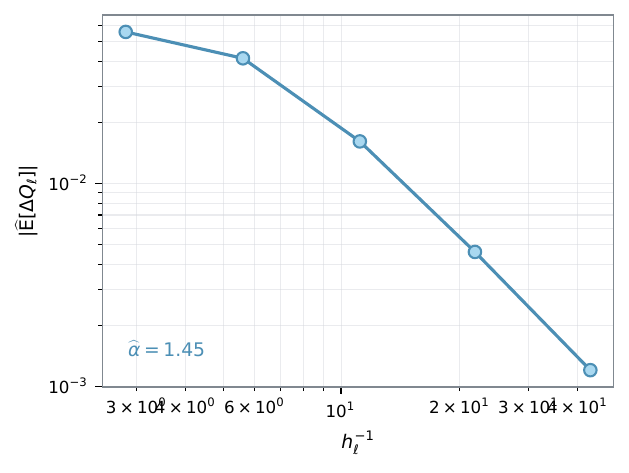}\par\vspace{3pt}
    \includegraphics[width=\linewidth]{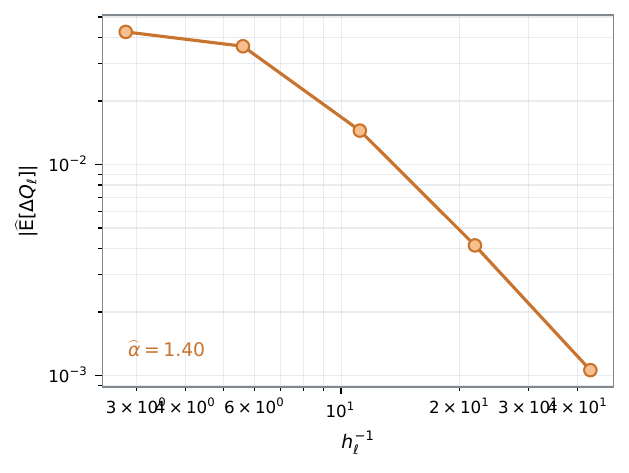}
    \caption{Mean corrections.}
    \label{fig:numerical_weak_decay}
  \end{subfigure}\hfill
  \begin{subfigure}[t]{0.32\textwidth}
    \centering
    \includegraphics[width=\linewidth]{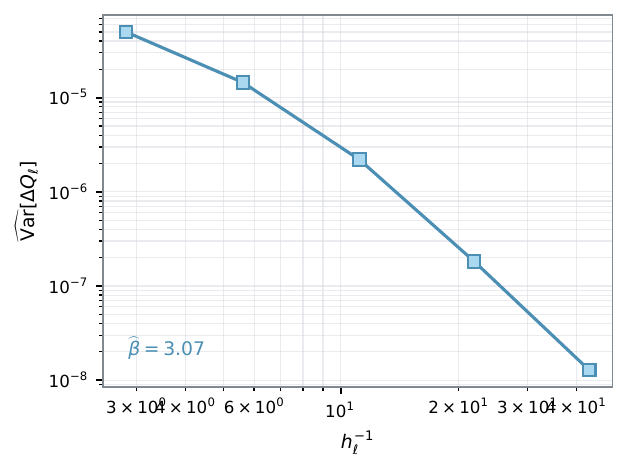}\par\vspace{3pt}
    \includegraphics[width=\linewidth]{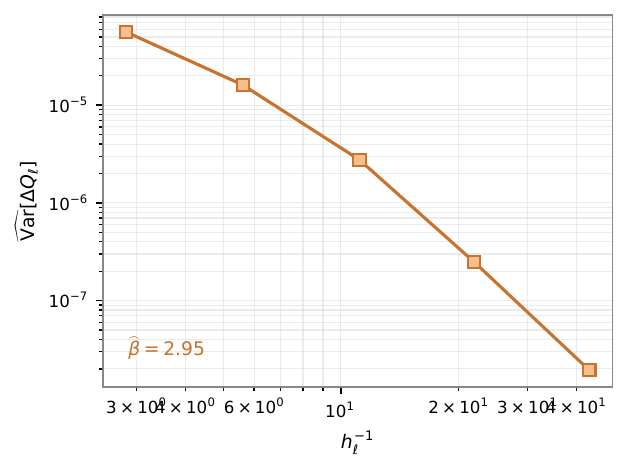}
    \caption{Correction variances.}
    \label{fig:numerical_variance_decay}
  \end{subfigure}\hfill
  \begin{subfigure}[t]{0.32\textwidth}
    \centering
    \includegraphics[width=\linewidth]{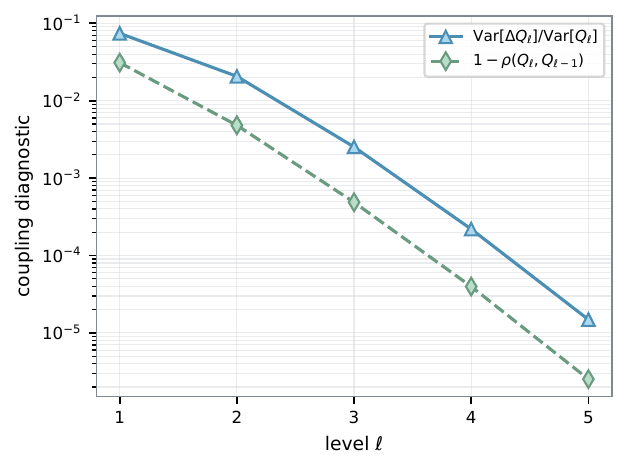}\par\vspace{3pt}
    \includegraphics[width=\linewidth]{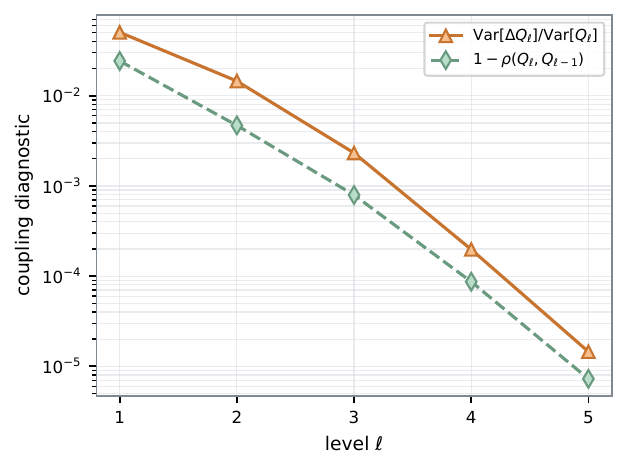}
    \caption{Coupling diagnostics.}
    \label{fig:numerical_coupling_diagnostic}
  \end{subfigure}
  \caption{\em
    Multilevel correction decay and coupling, with FFT in the top row
    and SPDE in the bottom row.  The columns show mean corrections,
    correction variances, and the correction-to-fine variance ratio
    together with $1-\rho$.}
  \label{fig:numerical_corrections}
\end{figure}

\begin{table}[htbp]
  \centering
  \scalebox{\numericalscale}{%
    \begin{tabular}{rrrrrrr}
\toprule
$\ell$ & $h_\ell$ & $|\widehat{\mathrm{E}}[\Delta Q_\ell]|$ & $\widehat{\mathrm{Var}}[\Delta Q_\ell]$ & var. ratio & corr. & cost (s)\\
\midrule
0 & 7.0788e-01 & 2.8478e-01 & 4.8341e-04 & 1.000e+00 & -- & 1.288e-02\\
1 & 3.5449e-01 & 5.5812e-02 & 5.0030e-05 & 7.429e-02 & 0.9689 & 4.553e-02\\
2 & 1.7807e-01 & 4.1370e-02 & 1.4517e-05 & 2.066e-02 & 0.9952 & 1.089e-01\\
3 & 8.9632e-02 & 1.6117e-02 & 2.2005e-06 & 2.528e-03 & 0.9995 & 2.331e-01\\
4 & 4.5588e-02 & 4.6014e-03 & 1.8236e-07 & 2.196e-04 & 1.0000 & 4.933e-01\\
5 & 2.3147e-02 & 1.2034e-03 & 1.2753e-08 & 1.487e-05 & 1.0000 & 1.143e+00\\
\midrule
\multicolumn{7}{l}{Fitted rates: $\alpha=1.45$, $\beta=3.07$, $\gamma=1.27$.}\\
\bottomrule
\end{tabular}
  }
  \caption{\em
    FFT multilevel correction statistics.}
  \label{tab:numerical_variance_fft}
\end{table}

\begin{table}[htbp]
  \centering
  \scalebox{\numericalscale}{%
    \begin{tabular}{rrrrrrr}
\toprule
$\ell$ & $h_\ell$ & $|\widehat{\mathrm{E}}[\Delta Q_\ell]|$ & $\widehat{\mathrm{Var}}[\Delta Q_\ell]$ & var. ratio & corr. & cost (s)\\
\midrule
0 & 7.0788e-01 & 3.0754e-01 & 8.1825e-04 & 1.000e+00 & -- & 1.186e-02\\
1 & 3.5449e-01 & 4.2498e-02 & 5.6280e-05 & 5.057e-02 & 0.9757 & 4.353e-02\\
2 & 1.7807e-01 & 3.6340e-02 & 1.6099e-05 & 1.456e-02 & 0.9953 & 1.006e-01\\
3 & 8.9632e-02 & 1.4478e-02 & 2.7624e-06 & 2.321e-03 & 0.9992 & 2.191e-01\\
4 & 4.5588e-02 & 4.1354e-03 & 2.4814e-07 & 1.975e-04 & 0.9999 & 4.764e-01\\
5 & 2.3147e-02 & 1.0598e-03 & 1.9406e-08 & 1.451e-05 & 1.0000 & 1.171e+00\\
\midrule
\multicolumn{7}{l}{Fitted rates: $\alpha=1.40$, $\beta=2.95$, $\gamma=1.29$.}\\
\bottomrule
\end{tabular}
  }
  \caption{\em
    SPDE multilevel correction statistics.}
  \label{tab:numerical_variance_spde}
\end{table}

\subsection{End-to-end MLMC work}
\label{subsec:numerical_work}

Finally, we compare complete MLMC estimators with finest-level Monte
Carlo using measured sample costs.  For each tolerance, pilot
statistics determine the multilevel allocation and the complete
estimators are then repeated independently to measure their realised
RMSE.

The work--accuracy curves in
Figure~\ref{fig:numerical_mlmc_efficiency}\subref{fig:numerical_mlmc_work}
show that MLMC reduces the measured work for both coefficient samplers
over the full tolerance range considered, with the gain increasing as the
tolerance is tightened.  This behaviour is consistent with the rapid
decay of the coupled correction variances observed above.
Panels~\subref{fig:numerical_mlmc_allocation_fft} and
\subref{fig:numerical_mlmc_allocation_spde} give the FFT and SPDE sample
allocations at the tightest tolerance.

\begin{figure}[!htbp]
  \centering
  \begin{subfigure}[b]{0.36\textwidth}
    \centering
    \includegraphics[width=\linewidth]{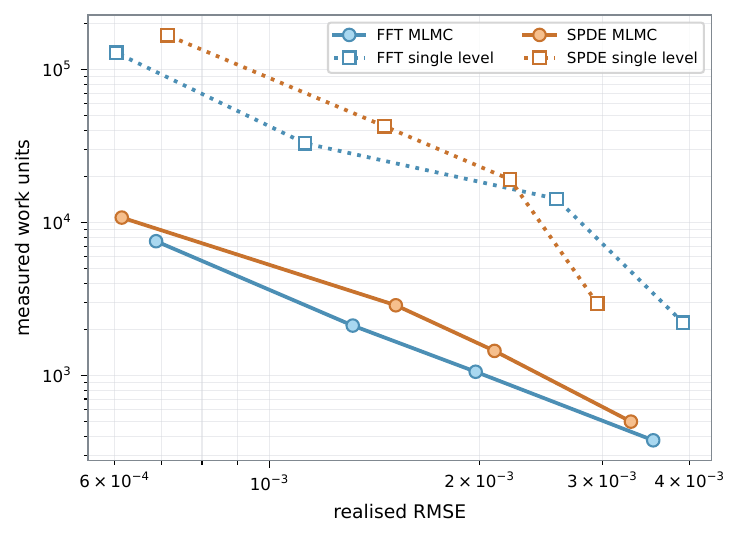}
    \caption{Work against RMSE.}
    \label{fig:numerical_mlmc_work}
  \end{subfigure}\hfill
  \begin{subfigure}[b]{0.30\textwidth}
    \centering
    \includegraphics[width=\linewidth]{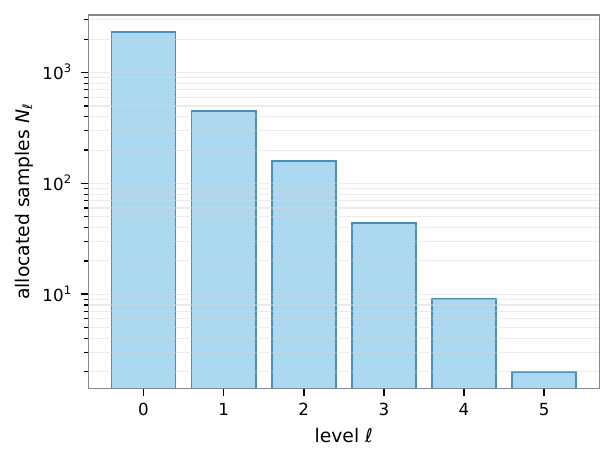}
    \caption{FFT allocation.}
    \label{fig:numerical_mlmc_allocation_fft}
  \end{subfigure}\hfill
  \begin{subfigure}[b]{0.30\textwidth}
    \centering
    \includegraphics[width=\linewidth]{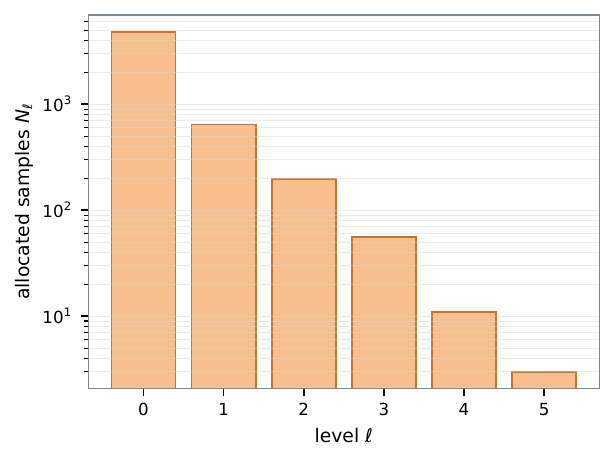}
    \caption{SPDE allocation.}
    \label{fig:numerical_mlmc_allocation_spde}
  \end{subfigure}
  \caption{\em
    Measured MLMC performance.  In (a), solid curves denote MLMC and
    dotted curves finest-level Monte Carlo, with a common colour for
    each sampler.  Panels (b) and (c) show sample allocations at the
    tightest displayed tolerance.}
  \label{fig:numerical_mlmc_efficiency}
\end{figure}

\begin{table}[htbp]
  \centering
  \scalebox{\numericalscale}{%
    \begin{tabular}{lrrrrrr}
\toprule
sampler & target & MLMC RMSE & MLMC work & SLMC RMSE & SLMC work & speed-up\\
\midrule
FFT & 5.0e-03 & 3.545e-03 & 3.762e+02 & 3.914e-03 & 2.210e+03 & 5.87\\
FFT & 3.0e-03 & 1.973e-03 & 1.057e+03 & 2.578e-03 & 1.428e+04 & 13.51\\
FFT & 2.0e-03 & 1.315e-03 & 2.118e+03 & 1.123e-03 & 3.298e+04 & 15.57\\
FFT & 1.0e-03 & 6.875e-04 & 7.545e+03 & 6.029e-04 & 1.289e+05 & 17.08\\
SPDE & 5.0e-03 & 3.294e-03 & 4.986e+02 & 2.948e-03 & 2.955e+03 & 5.93\\
SPDE & 3.0e-03 & 2.100e-03 & 1.447e+03 & 2.211e-03 & 1.911e+04 & 13.21\\
SPDE & 2.0e-03 & 1.516e-03 & 2.875e+03 & 1.461e-03 & 4.262e+04 & 14.83\\
SPDE & 1.0e-03 & 6.139e-04 & 1.077e+04 & 7.136e-04 & 1.677e+05 & 15.57\\
\bottomrule
\end{tabular}
  }
  \caption{\em
    Realised RMSE and measured work for MLMC and finest-level Monte
    Carlo.}
  \label{tab:numerical_mlmc_work}
\end{table}

The experiments confirm the expected $P_1$ polytopic DG rates and
demonstrate the distinct roles of coefficient representation and
inter-level coupling.  The FFT experiments exhibit the
representation-error contribution predicted by the Strang estimate,
while both coupling strategies produce rapidly decaying multilevel
corrections.  The observed correction-variance exponents exceed the
measured growth of coupled-sample cost, and the resulting MLMC
estimators substantially reduce the measured work relative to
finest-level Monte Carlo.

\section{Conclusions}
\label{sec:conclusions}

We have developed a multilevel Monte Carlo framework for coercive
elliptic problems with random diffusion and reaction coefficients on
general polytopic meshes.  The forward problem is discretised by a
lifting-based symmetric interior penalty DG method, and the analysis
separates spatial discretisation error from error introduced by the
representation of the random coefficients.  Two coefficient-generation
strategies were considered: a common-reference FFT construction and an
SPDE-based Mat\'ern construction coupled through projected white-noise
loads on agglomerated hierarchies.  The numerical experiments on
Voronoi and agglomerated polytopic meshes confirm the expected forward
rates, illustrate the coefficient-representation contribution to the
error, and show strong decay of the coupled multilevel corrections.

\printbibliography

\end{document}